\documentclass{amsart}

\usepackage{geometry,graphicx,amssymb,amsmath,amsbsy,eucal,amsfonts,mathrsfs,amscd,bm,tcolorbox, xcolor,caption,float,tikz,tikz-3dplot,cancel}
\usepackage[font=small]{subcaption}
\usetikzlibrary{patterns}
\usepackage[textsize=tiny,color=blue!50!white]{todonotes}
\usepackage[normalem]{ulem} 

\usepackage[breaklinks,bookmarks=false]{hyperref}

\hypersetup{colorlinks, linkcolor=blue, citecolor=blue,
urlcolor=blue, plainpages=false, pdfwindowui=false,
pdfstartview={FitH}, pdftitle={},}

\numberwithin{equation}{section}

\allowdisplaybreaks[3]

\newtheorem{theorem}{Theorem}[section]
\newtheorem{lemma}[theorem]{Lemma}
\newtheorem{assumption}[theorem]{Assumption}
\newtheorem{corollary}[theorem]{Corollary}
\newtheorem{proposition}[theorem]{Proposition}
\theoremstyle{definition}

\newtheorem{remark}[theorem]{Remark}

\usepackage{mydef}

\begin{document}

\title[Discrete-trace preserving commuting projections]{Bounded, Commuting, Discrete-trace Preserving Projections}

\def\AEr{Alexandre Ern}
\def\AEa{CERMICS, CNRS, ENPC, Institut Polytechnique de Paris, 77455 Marne-la-Vall\'ee, France \& Inria Paris, 48 rue Barrault, 75647 Paris, France}

\def\JG{Johnny Guzm\'an}
\def\JGa{Division of Applied Mathematics,
Brown University,
Box F,
182 George Street,
Providence, RI 02912, USA}

\def\PP{Pratyush Potu}
\def\PPa{Division of Applied Mathematics,
Brown University,
Box F,
182 George Street,
Providence, RI 02912, USA}

\def\MV{Martin Vohral\'ik}
\def\MVa{Inria Paris, 48 rue Barrault, 75647 Paris, France \& CERMICS, ENPC, Institut Polytechnique de Paris and CNRS, 77455 Marne-la-Vall\'ee, France}

\author{\AEr}
\address{\AEa}
\email{\href{mailto:alexandre.ern@enpc.fr}{alexandre.ern@enpc.fr}}

\author{\JG}
\address{\JGa}
\email{\href{mailto:johnny_guzman@brown.edu}{johnny\_guzman@brown.edu}}

\author{\PP}
\address{\PPa}
\email{\href{mailto:pratyush_potu@brown.edu}{pratyush\_potu@brown.edu}}


\thanks{The authors are thankful to Martin Vohral{\'i}k (INRIA Paris) for stimulating discussions on the applications of the discrete-trace preserving projections discussed in Section~\ref{sec:projections}.}

\begin{abstract} 
We construct bounded, commuting projections for the three-dimensional de Rham complex with the additional property that the projections preserve the trace of functions/fields if the latter is a piecewise polynomial in the appropriate trace space. The projections are locally defined and stable in the graph norm. More precisely, the part of the graph norm involving the exterior derivative only involves the oscillation of this derivative in a narrow strip of elements touching the boundary and weighted by the local mesh size. Moreover, the projections are $L^2$-stable locally when acting on functions/fields whose exterior derivative is a piecewise polynomial in the appropriate space. We present two salient applications of the present bounded, commuting, discrete-trace preserving projections: the construction of stable liftings of piecewise polynomial data and an optimality result on the discrete versus continuous extension of piecewise polynomial data.
\end{abstract}

\subjclass[2020]{65N30}
\keywords{vector calculus, finite element exterior calculus, spaces $\bH(\curl)$ and $\bH(\dive)$, cochain projection, commuting, local construction, discrete construction, discrete Poincar\'e inequality, discrete trace preservation}

\maketitle
    


\section{Introduction}

In \cite{EGPV_HO:24}, extending previous ideas from~\cite{Arn_Guz_loc_stab_L2_com_proj_21}, locally defined, $L^2$-stable, commuting projections are constructed for the finite element complex of Whitney forms in three dimensions both without boundary conditions and with homogeneous boundary conditions. In the present work, we develop locally defined, commuting projections which satisfy the additional property that a function/field with piecewise polynomial trace will be a function/field in the appropriate discrete space with the same piecewise polynomial trace. Because of this additional property, we call our new operators discrete-trace preserving projections. Such projections cannot be $L^2$-stable in general, but the projections we construct are stable in the graph norm. Moreover, they are $L^2$-stable when restricted to functions/fields whose exterior derivative is a piecewise polynomial. In the general case, the part of the graph norm involving the exterior derivative only involves the oscillation of this derivative (measured by its departure from the corresponding piecewise polynomial space) and is weighted by the local mesh size.

The property of preservation of piecewise polynomial boundary data is also observed in the Scott-Zhang interpolant \cite{Scott_Zhang_90}. As noted in \cite[Section 5]{Scott_Zhang_90}, an application of this property is the construction of a discrete lifting of the piecewise polynomial boundary data. Essentially, by composing a continuous level extension (e.g. the harmonic extension) with a discrete-trace preserving interpolant, one obtains a bounded discrete lifting. This lifting then delivers error estimates for nonhomogeneous problems in terms of the best-approximation error from the discrete space and the approximation of the boundary data separately, see \cite[Section 1]{Ains_Guzm_Saya_16}. One salient outcome of the present construction is therefore to generalize the above results to any discrete space from the finite element complex of Whitney forms in a systematic way. 
Another salient outcome of our construction is to establish an optimality result on the discrete versus continuous extension of piecewise polynomial data.
Previous bounded commuting projections have been developed in \cite{Arn_Guz_loc_stab_L2_com_proj_21, Chaum_Voh_H_curl_proj_24, Ern_Gud_Sme_Voh_loc_glob_div_22, ErnGu:16_molli, EGPV_HO:24, Falk_Winth_loc_coch_14, Gawl_Hols_Lich_21, Hipt_Pech_disc_1fo_19}, extending previous work in \cite{Christ_Wint_sm_proj_08, schoberlmultilevel}. Preservation of \textit{homogeneous} boundary conditions is explicitly discussed in \cite{Chaum_Voh_H_curl_proj_24, Christ_Wint_sm_proj_08, Ern_Gud_Sme_Voh_loc_glob_div_22, ErnGu:16_molli, EGPV_HO:24, Gawl_Hols_Lich_21, Hipt_Pech_disc_1fo_19}. The present work extends this previous work by additionally preserving discrete boundary data.

The present construction hinges on three main ideas. For the sake of clarity, the presentation focuses on the three-dimensional setting, i.e., the ambient space dimension is $d=3$. The first key idea is to combine the commuting operators from \cite{EGPV_HO:24} preserving homogeneous boundary conditions with some novel operators that commute under the trace operator and that are locally defined near the boundary. We call these novel operators $\{\Pib^l\}_{l\in\{0{:}3\}}$. The main result concerning these operators is Theorem~\ref{thm:Pi}. The second key idea, shared with~\cite{EGPV_HO:24}, is to build the operators $\{\Pib^l\}_{l\in\{0{:}2\}}$ using a two-scale decomposition into lowest-order and higher-order operators, which we call $\{P_0^l\}_{l\in\{0{:}2\}}$ and $\{P_+^l\}_{l\in\{0{:}2\}}$. (The construction of $\Pib^3$ does not require this decomposition.) The main result on the two-scale decomposition is Theorem~\ref{thm:Satisfaction_of_Pib}. The third key idea is to define the operators $\{P_0^l\}_{l\in\{0{:}2\}}$ and $\{P_+^l\}_{l\in\{0{:}2\}}$ by means of boundary weights satisfying suitable properties. More precisely, these boundary weights are first extended into the bulk, locally near the boundary, and then the above operators are defined by suitable linear forms based on integration by parts formulae inside the bulk, see~\eqref{eq:alpha_def} and~\eqref{eq:def_P0+}. The main results on the boundary weights are Lemma~\ref{lem:cond_zeta0} and Lemma~\ref{lem:cond_zeta+}. The construction of the lowest-order boundary weights follows the general idea from \cite{EGPV_HO:24} in the bulk, but requires nontrivial modifications. Indeed, the lowest-order boundary weights are still constructed using solutions of local problems. However, in the present work, these local problems are solved over the boundary and involve surface differential operators. We remark that the idea of extending weight functions defined on the boundary to the interior was also used in \cite{Gawl_Hols_Lich_21}.

The paper is organized as follows. In Sections \ref{sec:cont} and \ref{sec:discrete}, we introduce the continuous and discrete settings, respectively. Then, in Section \ref{sec:projections}, we give the main idea of the construction of the discrete-trace preserving commuting projections. Also in Section \ref{sec:projections}, we showcase several important applications of these projections. The rest of the paper is devoted to their construction.  In particular, in Section \ref{sec:construction_Pib}, we introduce the two main ideas of the construction, namely a two-level decomposition involving lowest-order and higher-order components on the one hand and the use of boundary weights together with integration by parts formulae inside the domain on the other hand. The lowest-order part of the construction is detailed in Section \ref{sec:lowest_order} and the higher-order part in Section \ref{sec:high_order}.


\section{Continuous setting}\label{sec:cont}

In this section, we present the continuous setting.

\subsection{Bulk setting}
We use much of the notation of \cite{EGPV_HO:24}. In particular, boldface font is used for vectors, vector fields, and functional spaces composed of such fields.
Let $\Omega$ be a Lipschitz, polyhedral, open, bounded, connected set in $\RRR^3$. 
We consider the de Rham complex
\begin{alignat}{4}\label{complex}
&\mathbb{R}
\stackrel{\subset}{\xrightarrow{\hspace*{0.5cm}}}\
 V^0
&&\stackrel{d^0}{\xrightarrow{\hspace*{0.5cm}}}\
 \bV^1
&&\stackrel{d^1}{\xrightarrow{\hspace*{0.5cm}}}\
 \bV^2
&&\stackrel{d^2}{\xrightarrow{\hspace*{0.5cm}}}\
V^3
\stackrel{d^3}{\xrightarrow{\hspace*{0.5cm}}}
0,
\end{alignat}
where 
\begin{equation}\label{eq:differential_operators}
    d^0 := \grad,\quad d^1 := \curl,\quad d^2 := \div,\quad d^3 := 0,
\end{equation}
and the relevant graph spaces are
\begin{equation} \label{eq:graph_spaces}
V^0:= H(\grad,\Omega) =H^1(\Omega), \quad \bV^1:= \bH(\curl, \Omega), \quad \bV^2:= \bH(\dive, \Omega), \quad V^3:= L^2(\Omega). 
\end{equation}
These spaces are equipped with the canonical graph norm which we denote $\|\SCAL\|_{V^l}$. We generically denote the $L^2$- or $\bL^2$-inner product over $\Omega$ by $(\cdot, \cdot)_{\Omega}$.

\subsection{Subspaces with zero trace}\label{subsec:homogeneous_spaces}

We denote by $\bn$ the unit outward normal to $\Omega$ and $\Gamma := \partial\Omega$. We now define the following trace operators for the graph spaces:
\begin{equation}\label{eq:def_trace}
    \tr^0(u) := u|_{\Gamma}, \quad \tr^1(\bu) := (\bn \times \bu|_{\Gamma}) \times \bn, \quad \tr^2(\bu) := \bu|_{\Gamma} \cdot \bn.
\end{equation}
We will also need the \emph{twisted tangential trace}, denoted by $\tr_{\perp}^1$, which is defined as
\begin{equation}
    \tr_{\perp}^1(\bu) := \bn \times \tr^1(\bu) = \bn \times \bu|_{\Gamma}.
\end{equation}
While not exactly a trace operator, we set
\begin{equation}
    \tr^3(u) := \int_\Omega u.
\end{equation}
This choice of notation for $\tr^3$ is not standard, but allows for a unified presentation of our results.
The graph spaces with zero boundary conditions are, for all $l\in\{0{:}3\}$,
\begin{equation} 
\mV^l:= \{u\in V^l: \tr^l(u)=0\}.
\end{equation}%
These subspaces satisfy the zero-trace counterpart of~\eqref{complex}, up to the change $d^3:=\tr^3$.

\subsection{Surface differential operators}
We introduce continuous surface spaces and surface differential operators. We take the point of view that the surface vector fields are $\R^3$-valued. For instance, recalling that $\Gamma = \partial\Omega$ and $\bn$ is the unit outward normal to $\Omega$, the space of $\bL^2$-integrable tangential vector fields over $\Gamma$ is
\begin{equation}
    \bL^2_\textsc{t}(\Gamma) := \{\bu \in \bL^2(\Gamma) : \bu \cdot \bn = 0 \}.
\end{equation}

We denote by $\sgrad$ the surface gradient on $\Gamma$. For a scalar function $u$, defined on $\Gamma$, $\sgrad(u)$ can be explicitly defined in terms of local coordinates or as local projections of $\grad(\tilde{u})$ onto $\Gamma$, where $\tilde{u}$ is an extension of $u$ to a neighborhood of $\Gamma$ (see, e.g., \cite[Definition 3.1]{Buffa02}). We let 
\begin{equation}
    H^1(\Gamma) := \{u \in L^2(\Gamma) : \sgrad(u) \in\bL_{\textsc{t}}^2(\Gamma)\}.
\end{equation}
We then define the surface vector curl (rotated gradient) operator $\srot: H^1(\Gamma) \to \bL^2_{\textsc{t}}(\Gamma)$ as 
\begin{equation}
    \srot(u) := \bn \times \sgrad(u).
\end{equation}
We denote
\begin{equation}
    H(\sgrad,\Gamma) = H(\srot,\Gamma) := H^1(\Gamma).
\end{equation}


We generically denote the $L^2$- or $\bL^2$-inner product over $\Gamma$ by $(\cdot, \cdot)_{\Gamma}$. The surface scalar curl and surface divergence are denoted  $\scurl$ and $\sdive$, respectively, and the domains of these operators are subspaces of $\bL^2_{\textsc{t}}(\Gamma)$. Explicit definitions can be found in \cite[Chapter 3.4]{Monk_03} or \cite{Buffa02} for instance. Using these operators, we define the spaces
\begin{subequations}
\begin{alignat}{2}
    \bH(\scurl, \Gamma) &:= \{\bu \in \bL^2_{\textsc{t}}(\Gamma): \scurl (\bu) \in L^2(\Gamma)\}, \\
    \bH(\sdive, \Gamma) &:= \{\bu \in \bL^2_{\textsc{t}}(\Gamma): \sdive (\bu) \in L^2(\Gamma)\}.
\end{alignat}
\end{subequations}
These spaces form the complexes:
\begin{subequations}
\begin{alignat}{3}
&\mathbb{R}
\stackrel{\subset}{\xrightarrow{\hspace*{0.5cm}}}\
 H(\sgrad,\Gamma)
&&\stackrel{\sgrad}{\xrightarrow{\hspace*{0.5cm}}}\
 \bH(\scurl, \Gamma)
&&\stackrel{\scurl}{\xrightarrow{\hspace*{0.5cm}}}\
 L^2(\Gamma)
\stackrel{\int_\Gamma}{\xrightarrow{\hspace*{0.5cm}}}
0,\label{eq:surface_complex_curl}\\
&\mathbb{R}
\stackrel{\subset}{\xrightarrow{\hspace*{0.5cm}}}\
 H(\srot,\Gamma)
&&\stackrel{\srot}{\xrightarrow{\hspace*{0.5cm}}}\
 \bH(\sdive, \Gamma)
&&\stackrel{\sdive}{\xrightarrow{\hspace*{0.5cm}}}\
 L^2(\Gamma)
\stackrel{\int_\Gamma}{\xrightarrow{\hspace*{0.5cm}}}
0.\label{eq:surface_complex_div}
\end{alignat}
\end{subequations}
Moreover, the following identities hold true:
\begin{subequations}
    \begin{alignat}{3}
        (\scurl(\bv), w )_\Gamma &= (\bv, \srot(w))_\Gamma, \quad &&\forall \bv \in \bH(\scurl, \Gamma), \;&&\forall w \in H^1(\Gamma),\label{eq:scurl_adjoint}\\
        (\sdive(\bv), w )_\Gamma &= - (\bv, \sgrad(w))_\Gamma, \quad &&\forall \bv \in \bH(\sdive, \Gamma),\;&&\forall w \in H^1(\Gamma).\label{eq:sdive_adjoint}
    \end{alignat}
\end{subequations}
We note the following commuting relationships between the trace operators defined in \eqref{eq:def_trace} and the surface differential operators:
\begin{subequations}\label{eq:trace_d_commuting} \begin{alignat}{2}
\sgrad (\tr^0(u)) &= \tr^1 (\grad(u)), &\quad \scurl (\tr^1(\bv)) &= \tr^2 (\curl( \bv)),\\
\srot (\tr^0(u)) &= \tr^1_\perp (\grad(u)), &\quad \sdive (\tr^1_\perp(\bv)) &= -\tr^2 (\curl( \bv)),
\end{alignat} \end{subequations}
which hold almost everywhere for smooth enough $u$ and $\bv$.

\section{Discrete setting}
\label{sec:discrete}

In this section, we present the discrete setting. 

\subsection{Simplicial mesh and geometric objects}
Let $\Th$ be a simplicial triangulation of $\Omega$.
The shape-regularity parameter of the mesh $\Th$ is defined as 
\begin{equation} \label{eq_rho}
\rho_{\Th} \eq \max_{\tau \in \Th} h_\tau / \theta_\tau,
\end{equation}
where $h_\tau$ is the diameter of $\tau$ and $\theta_\tau$ the diameter of the largest ball inscribed in $\tau$.

For all $l\in\{0{:}3\}$, the $l$-simplices in $\Th$ are the mesh vertices for $l=0$,
the mesh edges for $l=1$, the mesh faces for $l=2$, and the mesh tetrahedra for $l=3$.
Notice that $l$-simplices are, by definition, closed sets.
We enumerate the vertices of $\Th$ and denote the set of vertices by $\Vh:=\{ x_0, \ldots, x_{N}\}$.
All the $l$-simplices are oriented by taking their vertices in
increasing enumeration order \cite[Sec.~10.3]{ErnGuermondbook}.
We denote the collection of (oriented) $l$-simplices as
\begin{equation} \label{eq:def_sigma}
\Delta_h^l := \{ \sigma = [x_{i_0},\ldots,x_{i_l}] \,:\, 0\le i_0<\ldots<i_l\le N\},
\end{equation}
where the brackets denote the convex hull of a set of points.
A more explicit notation is
\begin{equation}
\Vh := \Delta_h^0, \quad \Eh := \Delta_h^1, \quad \Fh := \Delta_h^2, \quad \Th := \Delta_h^3,
\end{equation}
where $\Eh$ is the set of (oriented) mesh edges, $\Fh$ is the set of (oriented) mesh faces, and $\Th$ the set of (oriented) mesh cells
(tetrahedra). 
The set $\Delta_h := \bigcup_{l\in\{0{:}3\}} \Delta_h^l$ is the collection of all the (oriented) geometric objects in the mesh. 
For every edge $e:=[x_{i_0}, x_{i_1}] \in \Eh$, we let $\bt_e$ be the unit tangent vector to $e$ pointing from $x_{i_0} $ to $x_{i_1}$. 
For every face $f:= [x_{i_0}, x_{i_1}, x_{i_2}] \in \Fh$, we let $\bn_f$ be the unit normal vector to $f$ such that $\bn_f:=\bt_{e_1}{\times}\bt_{e_2}$
with $e_1:=[x_{i_0}, x_{i_1}]$ and $e_2:=[x_{i_0}, x_{i_2}]$.

We will need to consider certain patches of tetrahedra. For all $l\in\{0{:}3\}$ and all $\sigma \in \Delta_h^l$, we define the \emph{star} of $\sigma$ as
\begin{equation}
    \st(\sigma) := \inte \bigg( \bigcup_{\substack{\tau \in  \Th \\\sigma \subset \tau}} \tau \bigg),
\end{equation}
where $\inte(\cdot)$ denote the interior of a subset set in $\R^3$.
For example, if $\sigma$ is a tetrahedron, $\st(\sigma)=\inte(\sigma)$.
Next, we define the $k$-th order extended star recursively as
\begin{equation}
\es^1(\sig) := \inte \bigg(
\bigcup_{\substack{\tau\in\Th \\ \sigma \cap \tau \neq \emptyset}}\tau\bigg), \qquad 
\es^k(\sigma) := \inte \bigg( \bigcup_{\substack{\tau \in \Th \\ \tau \cap \clos(\es^{k-1}(\sigma))\neq \emptyset}} \tau\bigg),\:\: \forall k\geq 2,
\end{equation}
where $\clos(\cdot)$ denotes the closure of a subset of $\R^3$. 
An equivalent definition for $\es^k(\sigma)$ is the union of $\st(v)$ for all the vertices $v\in \clos(\es^{k-1}(\sigma))$ for any $k\geq 1$ with the additional convention $\es^0(\sigma) = \sigma$. We simply write $\es(\sigma)$ for $\es^1(\sigma)$.  Also, note that if $\sigma$ is a vertex, then $\es(\sigma)=\st(\sigma)$. 
We define $h_{\sigma}:= \diam(\sigma)$ if $l \ge 1$ and $h_{\sigma}:=\diam(\st(\sigma))$ if $l=0$.

The sets of geometric objects lying on the boundary and in the interior of $\Omega$ are 
\begin{equation}
    \Delta_h^{l,\partial} := \{\sigma \in\Delta_h^l: \sigma\subset\Gamma\}, \qquad 
    \mathring{\Delta}_h^l := \Delta_h^l\setminus \Delta_h^{l,\partial}, \quad 
    \forall l\in\{0{:}3\}.
\end{equation}
Notice that $\Delta_h^{3,\partial}=\emptyset$ and $\mathring{\Delta}_h^3 = \Delta_h^3$. We set $\Delta_h^\partial = \bigcup_{l\in\{0{:}2\}} \Delta_h^{l, \partial}$ and $\mathring{\Delta}_h = \Delta_h \setminus \Delta_h^\partial$. We also use the more explicit notation
\begin{equation}
\Vh^\partial:=\Delta_h^{0,\partial}, \quad \Eh^\partial:=\Delta_h^{1,\partial}, \quad 
\Fh^\partial:=\Delta_h^{2,\partial}, \quad \mcalVh := \mathring{\Delta}_h^0, \quad 
\mEh := \mathring{\Delta}_h^1, \quad \mFh := \mathring{\Delta}_h^2.
\end{equation}

For all $l\in\{0{:}2\}$ and all $\sigma\in\Delta_h^{l,\partial}$, we define the boundary star and the extended boundary star of $\sigma$ as
\begin{equation} \label{eq:def_esb}
    \stb(\sigma) := \relinte \bigg( \bigcup_{\substack{f \in  \Fh^\partial \\ \sigma \subset f}} f\bigg), \qquad
    \esb(\sig) := \relinte \bigg( \bigcup_{\substack{f \in  \Fh^\partial \\ \sigma \cap  f \neq \emptyset}} f \bigg),
\end{equation}
where $\relinte(\cdot)$ denotes the relative interior of a subset of $\Gamma$ with respect to $\Gamma$. 
Similarly, one can think of $\esb(\sigma)$
 as the union of $\stb(v)$ for all the vertices $v\in \sigma$. In particular, if $\sigma$ is a boundary vertex, then $\stb(\sigma)=\esb(\sigma)$, and if $\sigma$ is a boundary face, $\stb(\sigma)=\sigma$.
In this work, we make the following mild assumption. This assumption is the surface counterpart of the assumption made in \cite{EGPV_HO:24} for the extended stars in the bulk and can be satisfied if the mesh is fine enough.

\begin{assumption}[Contractibility of extended boundary stars] \label{ass:cont_esb}
For all $l\in\{0{:}2\}$ and all $\sigma\in\Delta_h^{l,\partial}$, $\clos(\esb(\sigma))$ is contractible. 
\end{assumption}

To provide precise statements on the properties of our discrete-trace preserving commuting projections, we partition the set of mesh cells as $\Th = \Th^{l,\partial} \cup \mTh^l$ with $\mTh^l:=\Th\setminus \Th^{l,\partial}$, for all $l\in\{0{:}3\}$, and
\begin{equation}
    \Th^{l,\partial} := \{\tau\in \Th \::\: \tau\subset\clos(\st(\sigma)), \: \sigma\in \Delta_h^{l,\partial}\}, \quad \forall l\in\{0{:}2\},
\end{equation}
and $\Th^{3,\partial} := \emptyset$, so that $\mTh^3 = \Th$. 
Notice that, for $l\ge1$, tetrahedra in $\mTh^l$ may have nonempty intersection with $\Gamma$, but this intersection has zero $l$-dimensional measure (for instance, $\tau\cap \Gamma$ can contain a boundary vertex or a boundary edge if $\tau \in \mTh^2$, but not a boundary face), whereas tetrahedra in $\Th^{0,\partial}$ (resp., $\mTh^0$) are those that touch (resp., do not touch) the boundary. Notice also that $\Th^{2,\partial} \subset \Th^{1,\partial} \subset \Th^{0,\partial}$. We define the following subsets of $\Omega$:
\begin{equation}
    \Omega^{l,\partial} := \inte \bigg( \bigcup_{\tau\in \Th^{l,\partial}} \tau \bigg).
\end{equation}
We define the $\Omega^{l,\partial}$-extended stars as
\begin{equation} \label{eq:def_es_close_bnd}
    \es^{k,l,\partial}(\tau) := \es^k(\tau)\cap \Omega^{l,\partial}, \quad \forall \tau\in\Th, \quad \forall k\ge1, \quad \forall l\in\{0{:}2\}.
\end{equation}
Another useful partition of the mesh tetrahedra is $\Th = \Th^{l,\partial\partial} \cup \Th^{l,\circ\circ}$, where 
\begin{equation} \label{eq:circcirc}
\Th^{l,\circ\circ}:=\{\tau\in\Th\::\: |\es^{2,l,\partial}(\tau)| = 0\}, \quad
\Th^{l,\partial\partial}:=\Th\setminus \Th^{l,\circ\circ}. 
\end{equation}
In words, for all $\tau \in \Th^{l,\circ\circ}$, there is no mesh cell in $\clos(\es^2(\tau))$ touching the boundary through a subsimplex of dimension larger than $l$.

\subsection{Piecewise polynomial spaces}
\label{sec:pcw_poly}
Let $p\ge0$ be the polynomial degree. For a tetrahedron $\tau \in \Th$, $\pol_p(\tau)$  is the space of polynomials of degree at most $p$ defined on $\tau$,
$\Ne_p(\tau):=\{\bu(\bx) + \bx\times \bv(\bx) : \bu,\bv\in \pol_p(\tau;\R^3)\}$ is the $p$-th order N\'ed\'elec space~\cite{nedelec1980mixed}, and 
$\Rt_p(\tau):=\{\bu(\bx)+v(\bx)\bx: \bu\in \pol_p(\tau;\R^3), v\in\pol_p(\tau)\}$ is the $p$-th order Raviart--Thomas space~\cite{Ra_Tho_MFE_77}. We consider the following piecewise polynomial spaces:
\begin{subequations} \label{eq_spaces_disc} \begin{align}
    V_p^0 &:=  \{ u\in H^1(\Omega): u|_\tau \in \pol_{p+1}(\tau), \forall \tau \in \Th\}, \\
    \bV_p^1 &:=  \{ \bu\in \bH(\curl, \Omega): \bu|_\tau \in \Ne_p(\tau), \forall \tau \in \Th\}, \\
    \bV_p^2 &:= \{ \bu\in \bH(\dive, \Omega): \bu|_\tau \in \Rt_p(\tau), \forall \tau \in \Th\}, \\
    V_p^3 &:= \{ u\in L^2(\Omega): u|_\tau \in \pol_{p}(\tau), \forall \tau \in \Th\}.
\end{align} \end{subequations}
We use the generic notation $V_p^l$ for the above spaces, and note that $V_p^l \subset V^l$, for all $l\in\{0{:}3\}$, with the graph spaces $V^l$ defined in~\eqref{eq:graph_spaces}. 
Moreover, we define the piecewise polynomial spaces having zero trace as
\begin{equation}
\mV_{p}^l:= V_p^l \cap \mV^l.
\end{equation}

\section{Main idea of the construction and main applications}\label{sec:projections}

In this section, we provide the cornerstone result on the construction of bounded, commuting, discrete-trace preserving projections. We also present two useful applications of these projections, providing stable discrete liftings of piecewise polynomial boundary data on the one hand and an optimality result on the discrete versus continuous extension of polynomial data on the other hand. It is useful to define the following continuous spaces with piecewise polynomial trace:
\begin{align}\label{eq:def_contTraceSpaces} 
 \boxed{   V_{\tr,p}^l :=  \{ u\in V^l: \tr^l (u) \in \tr^l(V_{p}^l)\}, \quad \forall l\in\{0{:}3\}.}
\end{align} 
Notice that $V_{\tr,p}^3 = V^3$ for any polynomial degree $p$.

For positive numbers $a,b$, we abbreviate as $a\lesssim b$ the
inequality $a\le Cb$, where the value of the positive constant $C$ can change at each
occurrence, but it can only depend on the shape-regularity parameter $\rho_{\Th}$ of the mesh $\Th$ and the polynomial degree $p$. 

\subsection{The central idea}

In \cite[Section 6]{EGPV_HO:24}, projection operators denoted here $\mPi^l: \mV^l \to \mV_p^l$ for all $l\in\{0{:}3\}$, are constructed such that
\begin{subequations}\label{eq:mPi}
\begin{alignat}{3}
    &\mPi^l (u) = u \quad &&\forall u\in \mV_p^l,\, &&\forall l\in\{0{:}3\}, \label{eq:mPi_projection}\\
    &d^l \mPi^l (u) = \mPi^{l+1}(d^l u) \quad &&\forall u\in \mV^l,\, && \forall l\in\{0{:}2\}, \label{eq:mPi_commuting}\\
    &\|\mPi^l (u)\|_{L^2(\tau)} \lesssim \|u\|_{L^2(\es^2(\tau))} \quad &&\forall u\in \mV^l,\, &&\forall l\in\{0{:}3\},\, \forall \tau\in\Th.\label{eq:mPI_Bound}
\end{alignat}
\end{subequations}
From Section~\ref{sec:construction_Pib} and onward, we will construct operators $\Pi_\partial^l: V^l \to V_p^l$ for all $l\in\{0{:}3\}$ satisfying the following properties:
\begin{subequations}\label{eq:Pib}
\begin{alignat}{4}
&\tr^l(\Pib^l(u)) = \tr^l(u), \quad &&\forall u \in  V_{\tr,p}^l,\,&&  \forall l\in\{0{:}3\},  \label{eq:Pib_trace_preservation}\\
&\tr^{l+1}(d^l \Pib^l(u) ) = \tr^{l+1}(\Pib^{l+1}(d^l u)),  \quad &&\forall u\in V^l,\,&& \forall l\in\{0{:}2\},\label{eq:Pib_commuting}\\
&\Pib^l(u)|_\tau = 0, \quad &&\forall u\in V^l,\,&&\forall l\in\{0{:}2\},\,&&\forall \tau\in\mTh^l, \label{eq:Pib_support}\\
&\|\Pib^l(u)\|_{L^2(\tau)} \lesssim \|u\|_{L^2(\es^{2,l,\partial}(\tau))} + h_\tau \|d^l u \|_{L^2(\es^{2,l,\partial}(\tau))},  \quad &&\forall u\in V^l,\, &&\forall l\in\{0{:}2\},\, &&\forall \tau\in\Th^{l,\partial}, \label{eq:Pib_bound}\\
&\|\Pib^{l}(u)\|_{L^2(\tau)} \lesssim \|u\|_{L^2(\tau)}, \quad &&\forall u\in V^{l},\, &&l=3,\, &&\forall\tau\in\Th.\label{eq:Pib3_bound}
\end{alignat}
\end{subequations}

\begin{theorem}[Central idea]\label{thm:Pi}
Let the operators $\{\mPi^l\}_{l\in\{0{:}3\}}$ satisfy \eqref{eq:mPi} and let the operators $\{\Pib^l\}_{l\in\{0{:}3\}}$ satisfy \eqref{eq:Pib}. Define the linear operators $\Pi^l: V^l \to V_p^l$ as follows:
\begin{equation}\label{eq:Pidef}
\boxed{ \Pi^l:= \mPi^l(I-\Pib^l) + \Pib^l, \quad \forall l\in\{0{:}3\},}
\end{equation}
where $I$ denotes the identity. Then, these operators satisfy
\begin{subequations}
\begin{alignat}{3}
&\Pi^l(u) = u,   \quad &&\forall u \in  V_p^l,\, &&\forall l\in\{0{:}3\}, \label{eq:Pi_projection}\\
&\tr^l(\Pi^l(u)) = \tr^l(u),  \quad &&\forall u \in  V_{\tr,p}^l,\, &&\forall l\in\{0{:}3\},\label{eq:Pi_trace_preservation}\\
&d^l\Pi^l(u) = \Pi^{l+1}(d^l u), \quad &&\forall u \in  V^l,\, &&\forall l\in\{0{:}2\},\label{eq:Pi_commuting}
\end{alignat}
as well as the following bounds: For all $u\in V^l$,
\begin{alignat}{3}
&\|\Pi^l(u)\|_{L^2(\tau)} \lesssim 
\|u\|_{L^2(\es^2(\tau))},\quad 
&& \forall l\in\{0{:}2\},\, && \forall \tau\in \Th^{l,\circ\circ}, \label{eq:Pi_L2bound_m}\\
&\|\Pi^l(u)\|_{L^2(\tau)} \lesssim \|u\|_{L^2(\es^2(\tau)\,\cup\, \es^{4,l,\partial}(\tau))} + h_\tau \|d^l u\|_{L^2(\es^{4,l,\partial}(\tau))},\quad && \forall l\in\{0{:}2\},\, && \forall \tau\in \Th^{l,\partial\partial},  \label{eq:Pi_L2bound_p}\\
&\|d^l \Pi^l(u)\|_{L^2(\tau)}\lesssim \|d^lu\|_{L^2(\es^2(\tau))},\quad
&& \forall l\in\{0{:}2\},\, && \forall \tau\in \Th^{l,\circ\circ},\label{eq:Pi_Hdbound_m} \\
&\|d^l \Pi^l(u)\|_{L^2(\tau)}\lesssim 
\|d^lu\|_{L^2(\es^2(\tau)\,\cup\, \es^{4,l,\partial}(\tau))},\quad
&& \forall l\in\{0{:}2\},\, && \forall \tau\in \Th^{l,\partial\partial}, \label{eq:Pi_Hdbound_p}\\
&\|\Pi^l(u)\|_{L^2(\tau)}\lesssim \|u\|_{L^2(\es^2(\tau))},\quad
&& l=3, \, && \forall \tau\in\Th, \label{eq:Pi_L2bound_3}
\end{alignat}
\end{subequations}
recalling the partition $\Th=\Th^{l,\circ\circ}\cup\Th^{l,\partial\partial}$ introduced in~\eqref{eq:circcirc}.
\end{theorem}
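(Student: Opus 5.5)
The plan is to verify each property by direct algebra on the definition \eqref{eq:Pidef}, using only \eqref{eq:mPi} and \eqref{eq:Pib}. First, a point about domains. In general $u-\Pib^l(u)\notin\mV^l$ when $u\in V^l$, since $\tr^l(u)$ need not be piecewise polynomial. I will therefore use $\mPi^l$ in the form constructed in \cite[Section 6]{EGPV_HO:24}: it is defined on all of $V^l$ (indeed on $L^2$), its range lies in $\mV_p^l$ so that $\tr^l(\mPi^l(\cdot))=0$, the bound \eqref{eq:mPI_Bound} holds for all $u\in V^l$, and the commuting property \eqref{eq:mPi_commuting} holds for all $u\in V^l$. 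I expect this to be the main obstacle: the commuting step below really needs $d^l\mPi^l=\mPi^{l+1}d^l$ on the whole of $V^l$, and not only on $\mV^l$. The first thing to check is that the weights in \cite{EGPV_HO:24} give exactly this.

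\emph{Projection and trace preservation.} Let $u\in V_p^l$. Then $u\in V_{\tr,p}^l$, so by \eqref{eq:Pib_trace_preservation} $\Pib^l(u)\in V_p^l$ has the same trace as $u$. Hence $u-\Pib^l(u)\in\mV_p^l$, and \eqref{eq:mPi_projection} gives $\Pi^l(u)=u-\Pib^l(u)+\Pib^l(u)=u$. For $u\in V_{\tr,p}^l$, the range of $\mPi^l$ has zero trace; for $l=3$ this means zero mean, which is what $\tr^3$ measures. Therefore $\tr^l(\Pi^l(u))=\tr^l(\Pib^l(u))=\tr^l(u)$.

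\emph{Commuting.} Let $u\in V^l$ with $l\in\{0{:}2\}$, and set $w:=u-\Pib^l(u)$ and $z:=\Pib^{l+1}(d^lu)-d^l\Pib^l(u)$. Then $z\in V_p^{l+1}$, and by \eqref{eq:Pib_commuting} $\tr^{l+1}(z)=0$, so $z\in\mV_p^{l+1}$ and $\mPi^{l+1}(z)=z$. Since $d^lw=d^lu-\Pib^{l+1}(d^lu)+z$, we obtain
\[
d^l\Pi^l(u)=\mPi^{l+1}(d^lw)+d^l\Pib^l(u)=\mPi^{l+1}\bigl(d^lu-\Pib^{l+1}(d^lu)\bigr)+z+d^l\Pib^l(u)=\Pi^{l+1}(d^lu).
\]

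\emph{Bounds.} By the triangle inequality and \eqref{eq:mPI_Bound},
\[
\|\Pi^l(u)\|_{L^2(\tau)}\lesssim\|u\|_{L^2(\es^2(\tau))}+\|\Pib^l(u)\|_{L^2(\es^2(\tau))}+\|\Pib^l(u)\|_{L^2(\tau)}.
\]
If $\tau\in\Th^{l,\circ\circ}$, every cell in $\clos(\es^2(\tau))$ lies in $\mTh^l$ up to null sets. By \eqref{eq:Pib_support} the $\Pib^l$ terms vanish, which gives \eqref{eq:Pi_L2bound_m}.

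If $\tau\in\Th^{l,\partial\partial}$, split $\es^2(\tau)$ into its cells $\tau'$ and apply \eqref{eq:Pib_bound} on each $\tau'\in\Th^{l,\partial}$. Three facts are used here: $\es^{2,l,\partial}(\tau')\subset\es^{4,l,\partial}(\tau)$; $h_{\tau'}\eqsim h_\tau$ by shape regularity; and the number of such cells is uniformly bounded. Together these give \eqref{eq:Pi_L2bound_p}. The case $l=3$ follows the same way from \eqref{eq:Pib3_bound}.

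For the derivative bounds, I write, by commutation, $d^l\Pi^l(u)=\mPi^{l+1}\bigl(d^lu-\Pib^{l+1}(d^lu)\bigr)+\Pib^{l+1}(d^lu)$ and repeat the argument above with $d^lu$ in place of $u$. If $l+1\le2$, the term $h\|d^{l+1}d^lu\|$ in \eqref{eq:Pib_bound} vanishes. If $l+1=3$, I use \eqref{eq:Pib3_bound}. Two inclusions finish the argument. Since $\Th^{l+1,\partial}\subset\Th^{l,\partial}$, we have $\Omega^{l+1,\partial}\subset\Omega^{l,\partial}$, hence $\es^{4,l+1,\partial}(\tau)\subset\es^{4,l,\partial}(\tau)$. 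In addition, $\Th^{l,\circ\circ}\subset\Th^{l+1,\circ\circ}$. This gives \eqref{eq:Pi_Hdbound_m} and \eqref{eq:Pi_Hdbound_p}.
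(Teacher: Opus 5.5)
The gap is the premise you single out yourself: that $d^l\mPi^l=\mPi^{l+1}d^l$ holds on all of $V^l$. It is not just unverified. It contradicts the other properties you assume for $\mPi^l$, so no inspection of the weights in \cite{EGPV_HO:24} can supply it.

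Take $l=0$. Commutation on $V^0$ gives $\grad\mPi^0(1)=\mPi^1(\bzero)=\bzero$. Hence $\mPi^0(1)$ is a constant with zero trace on the connected set $\Omega$, i.e., $\mPi^0(1)=0$. Set $u_\circ:=\sum_{v\in\mcalVh}W_v\in\mV_0^0\subset\mV_p^0$. Then \eqref{eq:mPi_projection} gives $\mPi^0(1-u_\circ)=-u_\circ$. But $1-u_\circ=\sum_{v\in\Vh^\partial}W_v$ vanishes outside $\Omega^{0,\partial}$. Moreover, any $\tau\in\Th^{0,\circ\circ}$ has no boundary vertex, so $u_\circ|_\tau=1$. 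Your extended version of \eqref{eq:mPI_Bound} then yields $|\tau|^{1/2}=\|u_\circ\|_{L^2(\tau)}\lesssim\|1-u_\circ\|_{L^2(\es^2(\tau))}=0$. So whenever $\Th^{0,\circ\circ}\neq\emptyset$, which is the case treated in \eqref{eq:Pi_L2bound_m}, a local projection onto the zero-trace space cannot commute on all of $V^0$.

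Your own algebra shows what happens for general $u\in V^l$: $d^l\Pi^l(u)-\Pi^{l+1}(d^lu)=d^l\mPi^l(w)-\mPi^{l+1}(d^lw)$ with $w:=u-\Pib^l(u)$. This is the commutation defect of $\mPi$ at $w$. It depends only on $\tr^l(w)$, and it vanishes when $\tr^l(w)=0$, i.e., exactly when $u\in V_{\tr,p}^l$. Nothing in \eqref{eq:mPi} or \eqref{eq:Pib} controls it otherwise. The derivative bounds \eqref{eq:Pi_Hdbound_m}--\eqref{eq:Pi_Hdbound_p}, which you derive from commutation, inherit the gap.

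The paper's proof has the same hole. It asserts that $u-\Pib^l(u)$ has zero trace ``owing to \eqref{eq:Pib_trace_preservation}'' for arbitrary $u\in V^l$, but that property is only available for $u\in V_{\tr,p}^l$.

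The fix is to state and prove \eqref{eq:Pi_commuting}, and hence \eqref{eq:Pi_Hdbound_m}--\eqref{eq:Pi_Hdbound_p}, for $u\in V_{\tr,p}^l$ only.
\begin{itemize}
\item For such $u$, $w\in\mV^l$.
\item If $v\in V_p^l$ has the same trace as $u$, then $d^l(u-v)\in\mV^{l+1}$. So $d^lu\in V_{\tr,p}^{l+1}$ and $d^lu-\Pib^{l+1}(d^lu)\in\mV^{l+1}$. For $l=2$ this is automatic, since $V_{\tr,p}^3=V^3$.
\item Your computation then goes through using \eqref{eq:mPi_commuting} only on zero-trace arguments.
\end{itemize}
This restricted statement is all that Section~\ref{sec:disc_lifting} uses, since $\Pi^l$ is applied there only to $E^l(g_h)$ and to fields with trace $g_p^l$.

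The rest of your proposal follows the paper and is correct:
\begin{itemize}
\item projection and trace preservation;
\item the $L^2$ bounds, where applying the local $L^2$ bound of $\mPi^l$ to arguments with nonzero trace is legitimate for the operators of \cite{EGPV_HO:24} and is exactly what the paper does;
\item the inclusions $\Th^{l,\circ\circ}\subset\Th^{l+1,\circ\circ}$ and $\es^{4,l+1,\partial}(\tau)\subset\es^{4,l,\partial}(\tau)$.
\end{itemize}
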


\begin{proof}
We first prove \eqref{eq:Pi_projection}. Let $u \in V_p^l$. Then $u\in V_{\tr,p}^l$, so that, by \eqref{eq:Pib_trace_preservation}, $u- \Pib^l(u) \in \mV_p^l$. Using \eqref{eq:mPi_projection}, we infer that
\begin{equation*}
    \Pi^l(u) = \mPi^l (u- \Pib^l (u)) + \Pib^l(u) = u- \Pib^l (u) + \Pib^l (u) = u.
\end{equation*} 

We next prove \eqref{eq:Pi_trace_preservation}. Let $u\in V_{\tr,p}^l$. Since $\mPi^l$ maps to $\mV_p^l$, we can use \eqref{eq:Pib_trace_preservation} to see that
\begin{equation*}
    \tr^l(\Pi^l(u)) = \tr^l(\mPi^l (u-\Pib^l (u))) + \tr^l(\Pib^l(u)) = \tr^l(\Pib^l(u)) = \tr^l(u).
\end{equation*} 

We now turn to \eqref{eq:Pi_commuting}. Using the commuting property \eqref{eq:mPi_commuting} and again that $u- \Pib^l(u) \in \mV_p^l$ owing to \eqref{eq:Pib_trace_preservation}, we infer that
\begin{alignat*}{2} 
d^l \Pi^l (u) -\Pi^{l+1} (d^l (u)) ={}&d^l \mPi^l\left(u-\Pib^l (u)\right) + d^l\Pib^l (u) \\
&- \mPi^{l+1}\left(d^l u -\Pib^{l+1} (d^l u)\right) - \Pib^{l+1}(d^l u)\\
={}&\mPi^{l+1} (d^l u- d^l \Pib^l(u))+d^l\Pib^l(u) \\
&- \mPi^{l+1} (d^l u-\Pib^{l+1}(d^l u))- \Pib^{l+1} (d^l u)\\
={}& \mPi^{l+1}(\Pib^{l+1} (d^l u)-d^l \Pib^l(u))- (\Pib^{l+1}(d^l u)-d^l \Pib^l(u))
=0.
\end{alignat*}%
In the last step, we used crucially that $\Pib^{l+1}(d^l u)-d^l \Pib^l(u) \in \mV_p^{l+1}$ (which is a consequence of~\eqref{eq:Pib_commuting}) and applied \eqref{eq:mPi_projection}.

Next, we show~\eqref{eq:Pi_L2bound_m}-\eqref{eq:Pi_L2bound_p}. Let $\tau \in \Th$. The triangle inequality followed by~\eqref{eq:mPI_Bound} gives
\begin{align*}
\|\Pi^l(u)\|_{L^2(\tau)} &\le \|\mPi^l(u)\|_{L^2(\tau)} + \|\Pib^l(u)\|_{L^2(\tau)} + \|\mPi^l(\Pib^l(u))\|_{L^2(\tau)}  \\
&\le \|u\|_{L^2(\es^2(\tau))} + \|\Pib^l(u)\|_{L^2(\es^2(\tau))}.
\end{align*}
We use the partition of the mesh tetrahedra introduced in~\eqref{eq:circcirc}. If $\tau \in \Th^{l,\circ\circ}$, then any tetrahedron $\tau'\subset\clos(\es^2(\tau))$ lies outside $\Omega^{l,\partial}$, and therefore is not an element of $\Th^{l,\partial}$. Thus, $\tau'\in \mTh^l$, so that \eqref{eq:Pib_support} implies that $\|\Pib^l(u)\|_{L^2(\es^2(\tau))}=0$. This proves \eqref{eq:Pi_L2bound_m}. If instead $\tau \in \Th^{l,\partial\partial}$, we invoke~\eqref{eq:Pib_bound} and the property 
\[
\bigcup_{\substack{\tau'\in\Th \\ \tau'\subset \clos(\es^{2}(\tau))}} \bigcup_{\substack{\tau'' \in \Th \\ \tau''\subset \clos(\es^{2,l,\partial}(\tau'))}} \tau'' \subseteq \es^{4,l,\partial}(\tau).
\] 
This proves \eqref{eq:Pi_L2bound_p}.
The bounds~\eqref{eq:Pi_Hdbound_m}-\eqref{eq:Pi_Hdbound_p} follow from the commuting property \eqref{eq:Pi_commuting}, the above bounds~\eqref{eq:Pi_L2bound_m}-\eqref{eq:Pi_L2bound_p}, and the complex property $d^{l+1} d^l = 0$.

Finally, \eqref{eq:Pi_L2bound_3} readily follows from \eqref{eq:mPI_Bound} for $l=3$ and \eqref{eq:Pib3_bound}.
\end{proof}

In the following two corollaries, we investigate somewhat sharper stability properties of the discrete-trace preserving projections $\Pi^l$.

\begin{corollary}[$L^2$-stability] \label{cor:L2stab_poly}
Let
$V_{d,p}^l := \{u\in V^l \::\: d^lu\in V^{l+1}_p\}$, for all $l\in\{0{:}2\}$.
Then we have $\|\Pi^l(u)\|_{L^2(\tau)} \lesssim \|u\|_{L^2(\es^2(\tau)\,\cup\, \es^{4,l,\partial}(\tau))}$ for all $u\in V_{d,p}^l$ and all $\tau\in\Th$, and consequently,
\begin{equation} \label{eq:stabL2_d_poly}
\|\Pi^l(u)\|_{L^2(\Omega)} \lesssim \|u\|_{L^2(\Omega)}, \quad \forall u\in V_{d,p}^l, \; \forall l\in\{0{:}2\}.
\end{equation} 
\end{corollary}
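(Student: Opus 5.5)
The plan is to start from the bounds \eqref{eq:Pi_L2bound_m}--\eqref{eq:Pi_L2bound_p} of Theorem~\ref{thm:Pi}. The only term to eliminate is the derivative contribution $h_\tau\|d^l u\|_{L^2(\es^{4,l,\partial}(\tau))}$. Since \eqref{eq:Pi_L2bound_p} is stated for general $u\in V^l$, it cannot by itself exploit the assumption $d^l u\in V^{l+1}_p$. I will therefore not use \eqref{eq:Pi_L2bound_p} directly, but go back to the proof of that bound. There the key estimate is \eqref{eq:Pib_bound} for $\Pib^l(u)$ on tetrahedra $\tau'\in\Th^{l,\partial}$, so the task reduces to removing the derivative term from \eqref{eq:Pib_bound} when $d^lu$ is piecewise polynomial.

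A blind inverse inequality is not available, because $u$ itself is not discrete. Instead, I will use that $d^l u = w_h\in V^{l+1}_p$ and bound $\|w_h\|$ weakly. On each tetrahedron $\tau''$, the function $w_h|_{\tau''}$ lies in a finite-dimensional polynomial space, so by scaling
\[
\|w_h\|_{L^2(\tau'')}\lesssim \sup_{\phi}\frac{(w_h,\phi)_{\tau''}}{\|\phi\|_{L^2(\tau'')}},
\]
where the supremum runs over $\phi\in\pol(\tau'';\R^{n})$ of degree $p+1$ vanishing to high order on $\partial\tau''$. One concrete choice is $\phi = b_{\tau''}^2 q$, with $b_{\tau''}$ the element bubble and $q$ ranging over polynomials of the appropriate degree.

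The key step is then to integrate by parts: $(d^lu,\phi)_{\tau''}=\pm(u,\delta\phi)_{\tau''}$, where $\delta$ is the formal adjoint ($-\dive$, $\curl$, or $-\grad$ respectively). No boundary terms appear because $\phi$ vanishes on $\partial\tau''$ together with enough of its traces. An inverse inequality applied to the polynomial $\phi$ gives $\|\delta\phi\|_{L^2(\tau'')}\lesssim h_{\tau''}^{-1}\|\phi\|_{L^2(\tau'')}$. Combining these,
\[
h_{\tau''}\|d^lu\|_{L^2(\tau'')}\lesssim \|u\|_{L^2(\tau'')}.
\]
Inserting this elementwise into \eqref{eq:Pib_bound} and then running the triangle-inequality argument from the proof of Theorem~\ref{thm:Pi} yields the local bound $\|\Pi^l(u)\|_{L^2(\tau)}\lesssim\|u\|_{L^2(\es^2(\tau)\cup\es^{4,l,\partial}(\tau))}$ for every $\tau\in\Th$. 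For $\tau\in\Th^{l,\circ\circ}$ this follows directly from \eqref{eq:Pi_L2bound_m}. Shape regularity makes $h_{\tau''}\simeq h_\tau$ on the patches involved.

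The global bound \eqref{eq:stabL2_d_poly} then follows by squaring and summing over $\tau\in\Th$. Shape regularity gives a uniform bound on the number of patches $\es^2(\tau)\cup\es^{4,l,\partial}(\tau)$ that contain a given tetrahedron, so the overlap is finite.

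The step needing the most care is the elementwise inverse estimate $h\|d^lu\|\lesssim\|u\|$. It requires that the bubble-weighted test space be rich enough to detect all of $\pol_p$, $\Ne_p$ or $\Rt_p$ on a single element, that is, that the bilinear pairing is nondegenerate. This holds because $(w, b_{\tau''}^2 w)_{\tau''}=0$ forces $w=0$, since the bubble is positive in the interior. The constant is then uniform by a standard scaling and affine-mapping argument. For $\Ne_p$ and $\Rt_p$ the fields are vector polynomials of degree at most $p+1$, so the same argument applies.
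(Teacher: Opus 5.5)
Your proposal is correct and follows the paper's route. The paper's proof just cites \eqref{eq:Pi_L2bound_m}--\eqref{eq:Pi_L2bound_p}, ``an inverse inequality'' and finite overlap, and your bubble-function/integration-by-parts argument is the natural justification of that inverse estimate $h_{\tau''}\|d^lu\|_{L^2(\tau'')}\lesssim\|u\|_{L^2(\tau'')}$, which is needed because only $d^lu$, not $u$, is discrete. The detour through \eqref{eq:Pib_bound} is unnecessary, since the elementwise bound can be inserted directly into the right-hand side of \eqref{eq:Pi_L2bound_p}; a single bubble factor $b_{\tau''}$ already suffices, and $b_{\tau''}^2q$ is of course not of degree $p+1$ as your wording suggests.
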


\begin{proof}
The bound $\|\Pi^l(u)\|_{L^2(\tau)} \lesssim \|u\|_{L^2(\es^2(\tau)\,\cup\, \es^{4,l,\partial}(\tau))}$ follows from~\eqref{eq:Pi_L2bound_m}-\eqref{eq:Pi_L2bound_p} by invoking an inverse inequality and the regularity of the mesh. Squaring and summing over the mesh cells leads to~\eqref{eq:stabL2_d_poly} owing again to mesh regularity.
\end{proof}

\begin{corollary}[$L^2$-stability with fluctuation]
Let $u\in V^l$ and let $\{\widetilde{\Pi}^l\}_{l\in\{0{:}3\}}$ be the local $L^2$-stable commuting projectors of \cite[Theorem 5.2]{EGPV_HO:24}. Then, 
\begin{equation} \label{eq:stabL2_fluc}
\|\Pi^l(u)\|_{L^2(\Omega)} \lesssim \|u\|_{L^2(\Omega)} + \bigg\{\sum_{\tau\in\Th^{l,\partial}} h_\tau^{2}\|d^l u - \widetilde{\Pi}^{l+1}(d^l u)\|_{L^2(\tau)}^{2} \bigg\}^{\frac12}, \quad \forall u\in V^l, \; \forall l\in\{0{:}2\}.
\end{equation} 
\end{corollary}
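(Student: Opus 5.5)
The plan is to split $u$ into a part with piecewise polynomial exterior derivative and a fluctuation part. Then Corollary~\ref{cor:L2stab_poly} handles the first piece, and the local bounds of Theorem~\ref{thm:Pi} handle the second.

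\textbf{Construction of the splitting.} Fix $l\in\{0{:}2\}$ and $u\in V^l$. Set $g:=d^l u-\widetilde{\Pi}^{l+1}(d^l u)$. By the commuting property of the projectors of \cite[Theorem 5.2]{EGPV_HO:24}, we have $\widetilde{\Pi}^{l+1}(d^l u)=d^l\widetilde{\Pi}^l(u)$, so $g=d^l(u-\widetilde{\Pi}^l u)$. The natural first attempt is $w:=\widetilde{\Pi}^l u$ and $v:=u-w$. Then $w\in V_p^l$, so $\Pi^l w=w$ by~\eqref{eq:Pi_projection}, and $\|w\|_{L^2(\Omega)}\lesssim\|u\|_{L^2(\Omega)}$ by $L^2$-stability of $\widetilde{\Pi}^l$. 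For $v$ we also have $\|v\|_{L^2(\Omega)}\lesssim \|u\|_{L^2(\Omega)}$ and $d^l v=g$.

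\textbf{Bounding $\Pi^l v$.} Apply~\eqref{eq:Pi_L2bound_m}--\eqref{eq:Pi_L2bound_p} cellwise to $v$:
\begin{equation*}
\|\Pi^l(v)\|_{L^2(\tau)}\lesssim \|v\|_{L^2(\es^2(\tau)\cup\es^{4,l,\partial}(\tau))}+h_\tau\|g\|_{L^2(\es^{4,l,\partial}(\tau))}.
\end{equation*}
The $d^l$-term is present only for $\tau\in\Th^{l,\partial\partial}$. By shape regularity, $h_\tau\approx h_{\tau'}$ for every $\tau'\subset\clos(\es^4(\tau))$. The region $\es^{4,l,\partial}(\tau)$ is contained in $\Omega^{l,\partial}$, i.e., it is a union of cells of $\Th^{l,\partial}$. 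Squaring and summing over $\tau$, with finite overlap of the patches $\es^4(\tau)$ (again a consequence of shape regularity), gives
\begin{equation*}
\|\Pi^l v\|_{L^2(\Omega)}^2\lesssim \|v\|_{L^2(\Omega)}^2+\sum_{\tau\in\Th^{l,\partial}}h_\tau^2\|g\|_{L^2(\tau)}^2 .
\end{equation*}
Combining this with $\Pi^l u=w+\Pi^l v$ and the triangle inequality yields~\eqref{eq:stabL2_fluc}.

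\textbf{Main obstacle.} The substantive step is the use of the commuting identity $d^l\widetilde{\Pi}^l=\widetilde{\Pi}^{l+1}d^l$ together with local $L^2$-stability of $\widetilde{\Pi}^l$. This is what turns the full derivative $d^l u$ appearing in~\eqref{eq:Pi_L2bound_p} into its oscillation $g$. I would check that \cite[Theorem 5.2]{EGPV_HO:24} provides projectors on the spaces without boundary conditions, so that they act on all of $V^l$. The rest is overlap and mesh-size bookkeeping. I would verify in particular that only cells of $\Th^{l,\partial}$ appear in the fluctuation sum, which holds by the definition~\eqref{eq:def_es_close_bnd} of $\es^{4,l,\partial}$.
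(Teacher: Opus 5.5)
Your proposal is correct and follows the paper's proof essentially verbatim. You write $\Pi^l(u)=\widetilde{\Pi}^l u+\Pi^l(u-\widetilde{\Pi}^l u)$ via \eqref{eq:Pi_projection}, apply \eqref{eq:Pi_L2bound_m}--\eqref{eq:Pi_L2bound_p} to the fluctuation, sum using finite overlap, and conclude with the $L^2$-stability and commuting property of $\widetilde{\Pi}^l$ and the fact that cells in $\es^{4,l,\partial}(\tau)$ belong to $\Th^{l,\partial}$. (Minor remark: your opening line credits Corollary~\ref{cor:L2stab_poly} with handling the piece $\widetilde{\Pi}^l u$, but what you actually use is the projection property $\Pi^l(\widetilde{\Pi}^l u)=\widetilde{\Pi}^l u$, which suffices.)
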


\begin{proof}
First, using \eqref{eq:Pi_projection} gives $\Pi^l(u) = \Pi^l(u - \widetilde{\Pi}^l u) + \Pi^l(\widetilde{\Pi}^l u) = \Pi^l(u - \widetilde{\Pi}^l u) + \widetilde{\Pi}^l u$. Hence, using the triangle inequality, we obtain $\|\Pi^l(u)\|_{L^2(\Omega)} \le \|\Pi^l(u - \widetilde{\Pi}^l u)\|_{L^2(\Omega)} + \|\widetilde{\Pi}^l u\|_{L^2(\Omega)}$.
We also immediately have $\|\widetilde{\Pi}^l u\|_{L^2(\Omega)}\lesssim \|u\|_{L^2(\Omega)}$ since $\widetilde{\Pi}^l$ is $L^2$-stable, so that it remains to bound $\|\Pi^l(u - \widetilde{\Pi}^l u)\|_{L^2(\Omega)}$. To this end, we note that we can write $\|\Pi^l(u - \widetilde{\Pi}^l u)\|_{L^2(\Omega)}^2$ as a sum over $\tau\in\Th$ and apply \eqref{eq:Pi_L2bound_m} for all $\tau\in \Th^{l,\circ\circ}$ and \eqref{eq:Pi_L2bound_p} for all $\tau\in\Th^{l,\partial\partial}$. Then invoking the finite overlap of the extended stars (due to mesh shape-regularity), we obtain the global bound
\begin{equation*}
    \|\Pi^l(u - \widetilde{\Pi}^l u)\|_{L^2(\Omega)} \lesssim \|u - \widetilde{\Pi}^l u\|_{L^2(\Omega)} + \bigg\{ \sum_{\tau \in \Th^{l,\partial\partial}} h_\tau^{2} \|d^l(u -\widetilde{\Pi}^l u)\|_{L^2(\es^{4,l,\partial}(\tau))}^{2} \bigg\}^{\frac12}.
\end{equation*}
We bound the first term on the right-hand side by invoking the triangle inequality and the $L^2$-stability of $\widetilde{\Pi}^l$. For the second term, we invoke the commuting property of $\widetilde{\Pi}^l$, together with mesh regularity and the fact that any $\tau'\in\clos(\es^{4,l,\partial}(\tau))$ is, by definition, in $\Th^{l,\partial}$. This proves \eqref{eq:stabL2_fluc}.
\end{proof}

\subsection{Application to stable discrete liftings}\label{sec:disc_lifting}
In this section, we give two useful applications of the commuting discrete-trace preserving projections constructed in Theorem \ref{thm:Pi}. 

We first show that these projections lead to stable discrete liftings of boundary data.
For all $l\in\{0{:}2\}$, we equip the trace space $\tr^l(V^l)$ with the following norm: For all $g \in \tr^l(V^l)$, 
\begin{alignat}{1}\label{eq:trace_norm_def}
    \|g\|_{\tr^l(V^l)}:= \inf_{\substack{v \in V^l \\ \tr^l(v)=g}} \|v\|_{V^l}.
\end{alignat}
It is well-known that, for all $l\in\{0{:}2\}$, the harmonic extension operator $E^l:\tr^l(V^l)\to V^l$ provides stable liftings of boundary data in $\tr^l(V^l)$. Indeed,  we have, for all $g\in \tr^l(V^l)$, 
\begin{subequations}\label{eq:harmonic_extension_props}
    \begin{align}
        &\tr^l(E^l(g)) = g, \label{eq:harmonic_extension_trace}\\ 
        &\|E^l(g)\|_{V^l} \lesssim \|g\|_{\tr^l(V^l)}. \label{eq:harmonic_extension_bound}
    \end{align}
\end{subequations}
We now show that we can combine such lifting operators with the projections constructed in Theorem \ref{thm:Pi} to obtain discrete stable liftings. 
\begin{lemma}[Discrete stable lifting operators]\label{lem:disc_stab_lif}
Let $\Pi^l: V^l \rightarrow V_p^l$ be the projection operators constructed in Theorem \ref{thm:Pi} for all $l\in\{0{:}2\}$. Let $E^l : \tr^l(V^l)\to V^l$ be the harmonic extension operators. Define the linear operators $L_p^l: \tr^l(V_p^l) \rightarrow V_p^l$ as $L_p^l = \Pi^l \circ E^l$ for all $l\in\{0{:}2\}$. Then, for all $g_h \in \tr^l(V_p^l)$,
\begin{subequations} 
\begin{alignat}{2}
\tr^l( L_p^l (g_h)) &= g_h, \quad &&   \label{trace111}  \\
\|L_p^l(g_h)\|_{V^l} &\lesssim \| g_h\|_{\tr^l(V^l)}. \quad  && \label{trace112}
\end{alignat}
\end{subequations}
\end{lemma}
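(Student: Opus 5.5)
The trace identity \eqref{trace111} follows from the trace preservation of the projections, and the bound \eqref{trace112} from their graph-norm stability combined with that of the harmonic extension.

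\emph{Trace identity.} Let $g_h\in\tr^l(V_p^l)$ and set $u:=E^l(g_h)\in V^l$. By \eqref{eq:harmonic_extension_trace}, $\tr^l(u)=g_h\in\tr^l(V_p^l)$, so $u\in V_{\tr,p}^l$ by definition \eqref{eq:def_contTraceSpaces}. Property \eqref{eq:Pi_trace_preservation} then gives $\tr^l(L_p^l(g_h))=\tr^l(\Pi^l(u))=\tr^l(u)=g_h$.

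\emph{Stability.} It suffices to show the global graph-norm bound $\|\Pi^l(u)\|_{V^l}\lesssim\|u\|_{V^l}$ for all $u\in V^l$. Combined with \eqref{eq:harmonic_extension_bound}, this yields $\|L_p^l(g_h)\|_{V^l}\lesssim\|E^l(g_h)\|_{V^l}\lesssim\|g_h\|_{\tr^l(V^l)}$.

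To prove the global bound, write $\|\Pi^l(u)\|_{L^2(\Omega)}^2=\sum_{\tau\in\Th}\|\Pi^l(u)\|_{L^2(\tau)}^2$. For $\tau\in\Th^{l,\circ\circ}$ use \eqref{eq:Pi_L2bound_m}, and for $\tau\in\Th^{l,\partial\partial}$ use \eqref{eq:Pi_L2bound_p}. Since $\Omega$ is bounded and the mesh covers it, $h_\tau\le\diam(\Omega)$, so $h_\tau\|d^lu\|$ is controlled by $\|d^lu\|$ up to a factor $\diam(\Omega)$. Alternatively, one can keep the $h_\tau$ weighting and note that it only improves the bound. The resulting constant then depends on $\diam(\Omega)$ as well as on $\rho_{\Th}$ and $p$, so the $\lesssim$ in \eqref{trace112} is understood with that additional dependence, which the harmonic-extension bound \eqref{eq:harmonic_extension_bound} already carries through $\Omega$.

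The derivative part is handled the same way using \eqref{eq:Pi_Hdbound_m}--\eqref{eq:Pi_Hdbound_p}. Summing uses finite overlap: by shape regularity, each $\tau'$ belongs to $\es^2(\tau)$ or $\es^{4,l,\partial}(\tau)$ for only a uniformly bounded number of $\tau$. This gives $\|\Pi^l(u)\|_{L^2(\Omega)}\lesssim\|u\|_{L^2(\Omega)}+\|d^lu\|_{L^2(\Omega)}$ and $\|d^l\Pi^l(u)\|_{L^2(\Omega)}\lesssim\|d^lu\|_{L^2(\Omega)}$.

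The only point needing care is this finite-overlap counting for the fourth-order extended stars. It is a standard consequence of shape regularity, since the number of cells in $\es^4(\tau)$ is uniformly bounded, so no real obstacle is expected.
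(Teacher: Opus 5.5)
Your proposal is correct and follows the paper's own proof. Trace identity \eqref{trace111} comes from applying \eqref{eq:Pi_trace_preservation} to $u=E^l(g_h)$, and bound \eqref{trace112} from summing the local bounds \eqref{eq:Pi_L2bound_m}--\eqref{eq:Pi_Hdbound_p} with finite overlap of extended stars and then invoking \eqref{eq:harmonic_extension_bound}. Your observation that absorbing the factor $h_\tau$ into the graph norm uses $h_\tau\le\diam(\Omega)$, so the constant picks up a dependence on $\Omega$, makes explicit a point the paper leaves implicit.
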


\begin{proof}
Let $g_h\in \tr^l(V_p^l)$ and set $u:=E^l(g_h)$. Owing to \eqref{eq:harmonic_extension_trace}, we infer that $\tr^l(u)=g_h$. In particular,
$u\in V^l_{\tr,p}$, and so  by \eqref{eq:Pi_trace_preservation}, 
\begin{equation*}
    \tr^l(L_p^l(g_h))=\tr^l(\Pi^l(u))=\tr^l(u)=g_h,
\end{equation*}
which proves \eqref{trace111}.

To show \eqref{trace112}, we first note that summing \eqref{eq:Pi_L2bound_m}-\eqref{eq:Pi_L2bound_p} and \eqref{eq:Pi_Hdbound_m}-\eqref{eq:Pi_Hdbound_p} over all $\tau\in\Th$ and invoking the finite overlap of extended stars (which follows from the shape regularity of the mesh) leads to the global estimate $\|\Pi^l(u)\|_{V^l}\lesssim \|u\|_{V^l}$. Then, invoking \eqref{eq:harmonic_extension_bound} proves \eqref{trace112} and completes the proof. 
\end{proof}

In many boundary-value problems, one seeks solutions $u \in V^l$ with $\tr^l(u)= g$ on $\partial \Omega$ where $g\in\tr^l(V^l)$ is some given boundary data. Very often one approximates $g$ by a discrete object $g_h\in \tr^l(V_p^l)$ and one finds an approximation $u_h \in V_p^l$ such that $\tr^l(u_h) = g_h$.  Classical arguments in error analysis lead to the bound
\begin{equation}
\|u-u_h\|_{V^l} \lesssim \inf_{\substack{v_h \in V_p^l \\ \tr^l(v_h) = g_h}} \|u-v_h\|_{V^l}. 
\end{equation}
A natural question is then to estimate the right-hand side in terms of the best-approximation of $u$ over the whole space $V^l_p$. As shown in \cite{Ains_Guzm_Saya_16}, one can obtain such an estimate as long as one has a discrete lifting. 

\begin{corollary}[Best-approximation with and without boundary data]
We have, for all $l\in\{0{:}2\}$,
\begin{equation}\label{eq:bestapproxresult}
\inf_{\substack{v_h \in V_p^l \\ \tr^l(v_h) = g_h}} \| u-v_h\|_{V^l} \lesssim \inf_{v_h^* \in V_p^l} \| u-v_h^*\|_{V^l} + \|g-g_h\|_{\tr^l(V^l)}.     
\end{equation}
\end{corollary}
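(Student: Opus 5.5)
Take an arbitrary $v_h^*\in V_p^l$ and correct it so that it carries the trace $g_h$ exactly, using the discrete lifting of Lemma~\ref{lem:disc_stab_lif}. Set
\begin{equation*}
v_h := v_h^* + L_p^l\bigl(g_h - \tr^l(v_h^*)\bigr).
\end{equation*}
This is well defined because $g_h\in\tr^l(V_p^l)$ and $\tr^l(v_h^*)\in\tr^l(V_p^l)$, so their difference lies in the domain of $L_p^l$. By linearity of $\tr^l$ and \eqref{trace111}, $\tr^l(v_h)=\tr^l(v_h^*)+g_h-\tr^l(v_h^*)=g_h$, so $v_h$ is admissible on the left-hand side of \eqref{eq:bestapproxresult}.

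Next we bound the error. The triangle inequality and \eqref{trace112} give
\begin{equation*}
\|u-v_h\|_{V^l}\le \|u-v_h^*\|_{V^l} + \|L_p^l(g_h-\tr^l(v_h^*))\|_{V^l}\lesssim \|u-v_h^*\|_{V^l} + \|g_h-\tr^l(v_h^*)\|_{\tr^l(V^l)}.
\end{equation*}
Here we use that $u$ satisfies $\tr^l(u)=g$, so that $g_h-\tr^l(v_h^*) = (g_h-g) + \tr^l(u-v_h^*)$. The quantity $\|\cdot\|_{\tr^l(V^l)}$ in \eqref{eq:trace_norm_def} is a (quotient) norm, hence satisfies the triangle inequality. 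Moreover, for any $w\in V^l$ we have $\|\tr^l(w)\|_{\tr^l(V^l)}\le\|w\|_{V^l}$, simply by taking $v=w$ in the infimum. Therefore
\begin{equation*}
\|g_h-\tr^l(v_h^*)\|_{\tr^l(V^l)}\le \|g-g_h\|_{\tr^l(V^l)}+\|u-v_h^*\|_{V^l}.
\end{equation*}
Combining the two displays and taking the infimum over $v_h^*\in V_p^l$ yields \eqref{eq:bestapproxresult}.

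There is no real obstacle once Lemma~\ref{lem:disc_stab_lif} is available. The only points that need care are checking that the argument of $L_p^l$ is a discrete trace, which holds because $\tr^l(V_p^l)$ is a linear space, and noting that the quotient-norm bound $\|\tr^l(w)\|_{\tr^l(V^l)}\le\|w\|_{V^l}$ holds with constant one.
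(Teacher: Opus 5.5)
Your proof is correct and is essentially the paper's argument. By linearity of $L_p^l$, your $v_h$ coincides with the paper's $w_h := v_h^*-L_p^l(\tr^l(v_h^*)-g_h)$. The subsequent bounds also match the paper's: the triangle inequality with \eqref{trace112}, then the triangle inequality for the trace norm, and finally $\|\tr^l(w)\|_{\tr^l(V^l)}\le\|w\|_{V^l}$.
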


\begin{proof}
We invoke the discrete lifting operators $L_p^l$ constructed in Lemma \ref{lem:disc_stab_lif}. Let $v_h^* \in V_p^l$ and set $w_h := v_h^*-L_p^l( \tr^l(v_h^*)-g_h)$. We see that $w_h\in V_p^l$, $\tr^l(w_h)=g_h$ due to \eqref{trace111}, and
\begin{equation*}
 \| u-w_h\|_{V^l}=\|u-v_h^*- L_p^l(\tr^l(v_h^*)-g_h) \|_{V^l} \lesssim \| u-v_h^*\|_{V^l} + \| \tr^l(v_h^*) -g_h\|_{\tr^l(V^l)},
\end{equation*}
where we used the triangle inequality and \eqref{trace112}. Moreover, using the triangle inequality and the definition of the norm \eqref{eq:trace_norm_def}, we obtain
\begin{alignat*}{1}
    \| \tr^l(v_h^*) - g_h\|_{\tr^l(V^l)} & \le \| g-g_h\|_{\tr^l(V^l)}+  \| \tr^l(u-v_h^*)\|_{\tr^l(V^l)} \\
    &\le \| g-g_h\|_{\tr^l(V^l)}+  \| u-v_h^*\|_{V^l}.
\end{alignat*}
Combining these two inequalities, we obtain 
\begin{equation*}
    \inf_{\substack{v_h \in V_p^l \\ \tr^l(v_h) = g_h}} \| u-v_h\|_{V^l} \le \|u-w_h\|_{V^l}\lesssim \|u-v_h^*\|_{V^l}+\|g-g_h\|_{\tr^l(V^l)}.
\end{equation*} 
Taking the infimum over $v_h^* \in V^l_p$ concludes the proof.
\end{proof}

We close this section with another application of the operators $\{\Pi^l\}_{l\in\{0{:}2\}}$ in showing the equivalence of the discrete and continuous $L^2$-minimizers of certain polynomial data.

\begin{corollary}[Extension of polynomial data]
For all $l\in\{0{:}2\}$, let $g^l_p \in \tr^l(V^l_p)$ and let $f^{l+1}_p \in d^lV^{l}_p$.
The following holds:
\begin{equation} \label{eq:min_min_poly}
\min_{\substack{v_h^l \in V^l_p \\
\tr^l(v_h^l) = g^l_p,\;
d^lv_h^l = f^{l+1}_p}} \|v_h^l\|_{L^2(\Omega)} \lesssim \min_{\substack{v^l \in V^l \\
\tr^l(v^l) = g^l_p,\;
d^lv^l = f^{l+1}_p}} \|v^l\|_{L^2(\Omega)}.
\end{equation}
\end{corollary}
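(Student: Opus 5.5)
The plan is to apply the projection $\Pi^l$ of Theorem~\ref{thm:Pi} to a continuous minimizer and check that the result is admissible for the discrete problem, with an $L^2$ norm controlled via Corollary~\ref{cor:L2stab_poly}.

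First, the continuous admissible set is nonempty: $g^l_p=\tr^l(w_h)$ for some $w_h\in V^l_p$, and $f^{l+1}_p=d^l z_h$ for some $z_h\in V^l_p$. Whether a single discrete function achieves both constraints simultaneously is exactly what the argument below produces. Assuming the continuous set is nonempty, it is a closed affine subspace of $V^l$ intersected with constraints. We minimize the $L^2$ norm over it; on this set $d^l v$ is fixed, so the $L^2$ norm and the graph norm differ by a constant. Hence a minimizer $v^l$ exists, by weak closedness of the affine constraint set in $V^l$ and coercivity in the graph norm. If the continuous set is empty, the right-hand side is $+\infty$ and there is nothing to prove. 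Likewise, the discrete set is a finite-dimensional affine space, so its minimum is attained whenever it is nonempty.

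Next, set $v_h:=\Pi^l(v^l)$. Since $\tr^l(v^l)=g^l_p\in\tr^l(V^l_p)$, we have $v^l\in V^l_{\tr,p}$, and \eqref{eq:Pi_trace_preservation} gives $\tr^l(v_h)=g^l_p$. By \eqref{eq:Pi_commuting}, $d^l v_h=\Pi^{l+1}(d^l v^l)=\Pi^{l+1}(f^{l+1}_p)=f^{l+1}_p$, using \eqref{eq:Pi_projection} and $f^{l+1}_p\in d^lV^l_p\subset V^{l+1}_p$. Thus $v_h$ is admissible for the discrete problem, which in particular is nonempty.

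Finally, $d^l v^l=f^{l+1}_p\in V^{l+1}_p$, so $v^l\in V^l_{d,p}$, and Corollary~\ref{cor:L2stab_poly} yields $\|v_h\|_{L^2(\Omega)}\lesssim\|v^l\|_{L^2(\Omega)}$, which concludes the proof. The substantive step is this $L^2$ bound without any $d^l$ term. It works only because the oscillation term in \eqref{eq:Pi_L2bound_p}, $h_\tau\|d^lu\|$, is absorbed via an inverse inequality when $d^lu$ is piecewise polynomial, as in Corollary~\ref{cor:L2stab_poly}. The existence of minimizers is only a minor technical point.
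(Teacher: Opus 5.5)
Your argument is correct and is essentially the paper's proof: you apply $\Pi^l$ to a continuous admissible $v^l$, get admissibility from \eqref{eq:Pi_trace_preservation}, \eqref{eq:Pi_commuting} and \eqref{eq:Pi_projection}, and get the $L^2$ bound from Corollary~\ref{cor:L2stab_poly} since $v^l\in V^l_{d,p}$. The paper applies this to every admissible $v^l$ and takes the infimum, so it never needs a continuous minimizer to exist. One small slip is your opening claim that the continuous admissible set is always nonempty: this does not follow from $w_h$ and $z_h$ existing separately, because $g^l_p$ and $f^{l+1}_p$ must be compatible. Your later handling of the empty case makes this harmless.
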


\begin{proof}
Let $\mu_h$ and $\mu$ be the values attained by the two minimization problems in~\eqref{eq:min_min_poly}. We need to prove that $\mu_h\lesssim \mu$.
(Notice in passing that the converse bound $\mu\le \mu_h$ is trivial.)
Let $v^l\in V^l$ be such that $\tr^l(v^l) = g^l_p$ and $d^lv^l = f^{l+1}_p$. 
Set $v_h^l:= \Pi^l(v) \in V^l_p$. Since $v^l\in V_{\tr,p}^l$, we infer from~\eqref{eq:Pi_trace_preservation} that $\tr^l(v_h^l) = g^l_p$. Moreover, \eqref{eq:Pi_commuting} and~\eqref{eq:Pi_projection} imply that $d^lv_h^l = f^{l+1}_p$. Hence, $v_h^l$ is in the discrete minimization set, so that $\mu_h \lesssim \| \Pi^l(v^l)\|_{L^2(\Omega)}$. Since $v^l\in V_{d,p}^l$, as defined in Corollary~\ref{cor:L2stab_poly}, the bound~\eqref{eq:stabL2_d_poly} therein implies that $\| \Pi^l(v^l)\|_{L^2(\Omega)} \lesssim \|v^l\|_{L^2(\Omega)}$. Thus, $\mu_h \lesssim \|v^l\|_{L^2(\Omega)}$ for all $v^l$ from the continuous minimization set. Hence, $\mu_h \lesssim \mu$, and the proof is complete. 
\end{proof}

\section{Construction of the operators $\Pib^l$}
\label{sec:construction_Pib}

In this section, we present the main ideas underlying the construction of the operators $\Pib^l$ for all $l\in\{0{:}2\}$. The construction of the operator $\Pib^3$ is simpler and is detailed separately.

\subsection{Decomposition into lowest-order and higher-order components}

For all $l\in\{0{:}2\}$, we consider a decomposition of the trace space $\tr^l(V^l_p)$ into lowest-order and higher-order components of the form
\begin{subequations} \label{eq:linear_dec} \begin{equation} \label{eq:direct_dec}
\tr^l(V^l_p) = \tr^l(V^l_{0}) \oplus \tr^l(V^l_{+,p}),
\end{equation}
where the supplementary space $V^l_{+,p}$ will be described in more detail 
in Section~\ref{sec:high_order} (the space $V^l_{+,p}$ is nontrivial only if $p\ge1$). 
For the time being, we just introduce two families of 
functions $\{B_{0,r}^l\}_{r\in I_0^l}\subset V^l_{p}$ and $\{B_{+,r}^l\}_{r\in I_+^l}\subset V_{p}^l$, with corresponding index sets $I_0^l$ and $I_+^l$, so that
\begin{equation} \label{eq:tr_Vl_span}
\tr^l(V_{0}^l) = \Span_{r\in I_0^l} (\tr^l(B_{0,r}^l)), \quad \tr^l(V_{+,p}^l) = \Span_{r\in I_+^l} (\tr^l(B_{+,r}^l)).
\end{equation} \end{subequations}
As further detailed in Section~\ref{sec:lowest_order}, the functions $\{B_{0,r}^l\}_{r\in I_0^l}$ are merely the lowest-order Whitney forms. However, to stay general, we keep the notation $B_{0,r}^l$ in what follows.
The direct sum~\eqref{eq:direct_dec} induces the following trace spaces:
\begin{subequations} \begin{align}
V_{\tr,0}^l &:= \{u \in V^l : \tr^l(u) \in \tr^l(V^l_{0})\}, \\
V_{\tr,+}^l &:= \{u \in V^l : \tr^l(u) \in \tr^l(V^l_{+,p})\}.
\end{align} \end{subequations}
Although not specifically required at this stage, we will see that the families
$\{\tr^l(B_{0,r}^l)\}_{r\in I_0^l}$ and $\{\tr^l(B_{+,r}^l)\}_{r\in I_+^l}$ are linearly
independent. This follows from the properties~\eqref{eq:Kron_zeta} and~\eqref{eq:Kron_zeta+} below.

\subsection{The linear maps $\alpha_{0,r}^l$ and $\alpha_{+,r}^l$}
\label{sec:alpha}

Our construction hinges on linear maps from $V^l$ to $\R$ which are denoted
$(\alpha_{0,r}^l)_{r\in I_0^l}$ and $(\alpha_{+,r}^l)_{r\in I_+^l}$, for all $l\in\{0{:}2\}$. 
Since both families of maps are constructed using the same principles, we drop here the subscripts $0$ and $+$ and use a unified presentation for linear maps which we generically denote as $(\alpha_{r}^l)_{r\in I^l}$.

The construction is based on piecewise polynomial functions defined on the boundary and having suitable properties as detailed in Section~\ref{sec:bnd_weights} below. These functions are called boundary weights. Specifically, we consider the following families of boundary weights:
\begin{equation}\label{eq:zeta_space}
\{\zeta_r^0\}_{r\in I^0} \subset L^2(\Gamma), \quad \{\bzeta_r^1\}_{r\in I^1} \subset \bH(\sdive, \Gamma), \quad \{\zeta_r^2\}_{r\in I^2} \subset H^1(\Gamma).
\end{equation}
From these boundary weights, we define piecewise polynomial extensions $(Y_r^l)_{r\in I^l}$ 
defined on $\Omega$ such that
\begin{equation}\label{eq:trace_Y_zeta}
\tr^2(\bY_r^0) = \zeta_r^0, \quad \tr^1_\perp(\bY_r^1) = \bzeta_r^1, \quad \tr^0(Y_r^2) = \zeta_r^2.
\end{equation}
Finally, the linear maps $\alpha_r^l:V^l\to \R$ are defined so that
\begin{equation}\label{eq:alpha_def}
\boxed{
\begin{alignedat}{3}
\alpha_r^0(w) &:= (w, \dive(\bY_r^0))_{\Omega} + (\grad(w), \bY_r^0)_{\Omega}, \quad &&\forall r\in I^0,\; &&\forall w\in V^0,\\
\alpha_r^1(\bw) &:= (\bw, \curl(\bY_r^1))_{\Omega} - (\curl(\bw), \bY_r^1)_{\Omega}, \quad &&\forall r\in I^1,\; &&\forall \bw\in \bV^1,\\
\alpha_r^2(\bw) &:= (\bw, \grad(Y_r^2))_{\Omega} + (\dive(\bw), Y_r^2)_{\Omega}, \quad &&\forall r\in I^2,\; &&\forall \bw\in \bV^2.
\end{alignedat}}
\end{equation}

\begin{lemma}[Properties of the linear maps $\alpha_r^l$] \label{lem:pties_alpha}
The following holds for all $r\in I^l$:
\begin{subequations} \label{eq:alpha_bnd} 
\begin{alignat}{2} 
&\alpha_r^l(w) = (\tr^l(w), \zeta_r^l)_{\Gamma}, \quad &&\forall w\in C^1(\overline{\Omega}) \cup V^l_p, \label{eq:alpha_bnd_C1}\\
&\alpha_r^l(w) = 0, \quad &&\forall w\in \mV^l, \label{eq:alpha_bnd_mV} \\
&\alpha_r^l(w) = (\tr^l(w), \zeta_r^l)_{\Gamma}, \quad &&\forall w\in V_{\tr,p}^l,\label{eq:alpha_bnd_tr}
\end{alignat} 
\end{subequations}
where the right-hand sides in \eqref{eq:alpha_bnd} are $L^2(\Gamma)$- or $\bL^2(\Gamma)$-inner products.
\end{lemma}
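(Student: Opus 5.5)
The plan is to prove the three identities in order, each resting on the integration by parts (Green) formula for the relevant pair of operators, together with the trace relations \eqref{eq:trace_Y_zeta} linking $Y_r^l$ to the boundary weight $\zeta_r^l$.

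\textbf{Step 1: identity \eqref{eq:alpha_bnd_C1}.} For $w$ smooth, say $w\in C^1(\overline{\Omega})$, the Green formulas give directly:
\begin{itemize}
\item for $l=0$, $(w,\dive\bY_r^0)_\Omega+(\grad w,\bY_r^0)_\Omega=(w,\bY_r^0\cdot\bn)_\Gamma=(\tr^0 w,\zeta_r^0)_\Gamma$;
\item for $l=1$, $(\bw,\curl\bY_r^1)_\Omega-(\curl\bw,\bY_r^1)_\Omega=(\bw,\bn\times\bY_r^1)_\Gamma$, which equals $(\tr^1\bw,\bzeta_r^1)_\Gamma$ because $\bn\times\bY_r^1$ is tangential, so only the tangential part $\tr^1(\bw)$ of $\bw$ contributes;
\item for $l=2$, the formula gives $(\bw\cdot\bn,Y_r^2)_\Gamma$.
\end{itemize}
Two points need checking. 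First, the sign conventions in the $l=1$ case must be verified against the orientation $\tr_\perp^1=\bn\times\cdot$. Second, $Y_r^l$ is only piecewise polynomial, so it lies only in the graph space of the adjoint operator, e.g.\ $\bY_r^0\in\bH(\dive,\Omega)$ and $\bY_r^1\in\bH(\curl,\Omega)$; for $l=2$ it is the scalar $Y_r^2\in H^1(\Omega)$. The formulas still hold since the normal/tangential continuity of $Y_r^l$ makes interelement boundary terms cancel. For $w\in V_p^l$ I will apply the same Green formula elementwise, summing over $\tau\in\Th$; interior face contributions cancel because both $w$ and $Y_r^l$ have the conforming continuity. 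A cleaner route is density: the generalized Green formula holds for $w\in V^l$ with $Y_r^l$ in the dual graph space, with the boundary term interpreted as a duality pairing. This pairing reduces to an $L^2(\Gamma)$ product as soon as $\tr^l(w)\in L^2$-type spaces, and that is certainly true for $V_p^l$ since $\tr^l(V_p^l)$ consists of piecewise polynomials.

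\textbf{Step 2: identity \eqref{eq:alpha_bnd_mV}.} I will use density of $C_0^\infty(\Omega)$ (respectively $\bC_0^\infty$) in $\mV^l$ in the graph norm. For compactly supported smooth $w$, the integration by parts has no boundary term, so $\alpha_r^l(w)=0$. Moreover $\alpha_r^l$ is continuous on $V^l$: by Cauchy--Schwarz, $|\alpha_r^l(w)|\le\|w\|_{V^l}\,\|Y_r^l\|$ in the dual graph norm, which is finite because $Y_r^l$ is piecewise polynomial and conforming. The identity therefore passes to the limit.

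\textbf{Step 3: identity \eqref{eq:alpha_bnd_tr}.} Given $w\in V_{\tr,p}^l$, pick $w_p\in V_p^l$ with $\tr^l(w_p)=\tr^l(w)$, which is possible by definition of $V_{\tr,p}^l$. Then $w-w_p\in\mV^l$, so by linearity, Step 2 and Step 1,
\[
\alpha_r^l(w)=\alpha_r^l(w_p)=(\tr^l(w_p),\zeta_r^l)_\Gamma=(\tr^l(w),\zeta_r^l)_\Gamma .
\]

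\textbf{Main obstacle.} I expect the main difficulty to be the justification in Step 1 for $w\in V_p^l$ (and the regularity requirements on $\zeta_r^l$ and $Y_r^l$ ensuring that the boundary pairing is a genuine $L^2(\Gamma)$ inner product). This is handled by the elementwise integration by parts with cancellation across interior faces, using the conformity of both $w$ and $Y_r^l$.
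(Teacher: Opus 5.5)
Your proposal is correct and follows essentially the same route as the paper: integration by parts combined with the trace relations \eqref{eq:trace_Y_zeta} for \eqref{eq:alpha_bnd_C1}, density of smooth compactly supported functions for \eqref{eq:alpha_bnd_mV}, and the splitting of $w$ into $v\in V_p^l$ plus $w-v\in\mV^l$ for \eqref{eq:alpha_bnd_tr}. The extra points you address are details the paper leaves implicit: the $l=1$ sign, which you have right since $(\bw,\curl\bY)_\Omega-(\curl\bw,\bY)_\Omega=(\bw,\bn\times\bY)_\Gamma$; the conformity of $Y_r^l$; and the continuity of $\alpha_r^l$ on $V^l$.
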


\begin{proof}
\eqref{eq:alpha_bnd_C1} follows by invoking \eqref{eq:zeta_space} and \eqref{eq:trace_Y_zeta} and integration by parts. \eqref{eq:alpha_bnd_mV} follows from considering in~\eqref{eq:alpha_bnd_C1} a smooth, compactly supported function $w$ and invoking a density argument. Finally, to prove~\eqref{eq:alpha_bnd_tr}, we consider $w\in V_{\tr,p}^l$ so that there is $v\in V_p^l$ so that $\tr^l(w) =\tr^l(v)$. Since $w-v\in \mV^l$, \eqref{eq:alpha_bnd_mV} implies that $\alpha_r^l(w)=\alpha_r^l(v)$. Moreover, since $v\in V_p^l$, \eqref{eq:alpha_bnd_C1} gives $\alpha_r^l(v) = (\tr^l(v), \zeta_r^l)_{\Gamma}$. Since $\tr^l(v)=\tr^l(w)$, we conclude that~\eqref{eq:alpha_bnd_tr} holds true.
\end{proof}

\subsection{Two-level construction of the operators $\Pib^l$ for all $l\in\{0{:}2\}$}

We define the operators $P_0^l:V^l\to V^l_{0}$ and $P_+^l:V^l\to V^l_{+,p}$ such that, for all $u\in V^l$ and all $l\in\{0{:}2\}$,
\begin{equation} \label{eq:def_P0+}
\boxed{P_0^l(u) := \sum_{r\in I_0^l} \alpha_{0,r}^l(u) B_{0,r}^l, \qquad
P_+^l(u) := \sum_{r\in I_+^l} \alpha_{+,r}^l(u) B_{+,r}^l.}
\end{equation}
Using these operators, we define
\begin{equation}\label{eq:def_Pib_HO}
\boxed{\Pib^l: = P_0^l + P_+^l (I - P_0^l), \quad \forall l\in\{0{:}2\}.}
\end{equation}
The goal of this section is to identify sufficient conditions on the operators $\{P_0^l\}_{l\in\{0{:}2\}}$ and $\{P_+^l\}_{l\in\{0{:}2\}}$ so that the operators $\{\Pi_\partial^l\}_{l\in\{0{:}2\}}$ satisfy \eqref{eq:Pib}.

\begin{remark}[$p=0$] \label{rem:p=0}
When $p=0$, we simply set $P_+^l\equiv0$ so that $\Pib^l=P_0^l$.
\end{remark}

We assume that the following properties hold true:
\begin{subequations}\label{eq:Pb_properties}
\begin{alignat}{4}
&\tr^l(P_0^l(u))=\tr^l(u), \quad &&\forall u \in V_{\tr,0}^l,\, && \forall l\in\{0{:}2\},\, \label{eq:P_trace_preservation}\\
&\tr^{l+1} (P_0^{l+1}(d^l u)) = \tr^{l+1} (d^l P_0^l (u)), \quad &&\forall u\in V^l,\, && \forall l\in\{0{:}1\},\label{eq:P_trace_commuting}\\
&P_0^l(u)|_\tau = 0, \quad &&\forall u\in V^l,\, &&\forall l\in\{0{:}2\},\, &&\forall \tau\in \mTh^l,\label{eq:P_support}\\
&\|P_0^l(u)\|_{L^2(\tau)} \lesssim \|u\|_{L^2(\es^{1,l,\partial}(\tau))} + h_\tau \|d^l u\|_{L^2(\es^{1,l,\partial}(\tau))}, \quad &&\forall u\in V^l,\, && \forall l\in\{0{:}2\},\, &&\forall \tau\in\Th^{l,\partial}. \label{eq:P_bound}
\end{alignat}
\end{subequations}
We notice that the assumptions~\eqref{eq:Pb_properties} only concern the operators $\{P_0^l\}_{l\in\{0{:}2\}}$.
Moreover, we also assume that the following properties hold true:
\begin{subequations}\label{eq:Qb_properties}
\begin{alignat}{4}
&\tr^l(P_+^l(u))=\tr^l(u), \quad 
\tr^l(P_0^l(u))=0 \quad &&\forall u \in V_{\tr,+}^l,\, && \forall l\in\{0{:}2\},\label{eq:Q_trace_preservation_0}\\
&\tr^{l+1} (P_+^{l+1} (d^l u)) = \tr^{l+1} (d^l P_+^l (u)), \quad &&\forall u\in V^l,\, && \forall l\in\{0{:}1\},\label{eq:Q_trace_commuting}\\
&P_+^l(u)|_\tau = 0, \quad &&\forall u\in V^l,\, &&\forall l\in\{0{:}2\},\, &&\forall \tau\in \mTh^l, \label{eq:Q_support}\\
&\|P_+^l(u)\|_{L^2(\tau)} \lesssim \|u\|_{L^2(\es^{1,l,\partial}(\tau))} + h_\tau \|d^l u\|_{L^2(\es^{1,l,\partial}(\tau))}, \quad &&\forall u\in V^l,\, &&\forall l\in\{0{:}2\}, \, &&\forall\tau\in\Th^{l,\partial}.\label{eq:Q_bound}
\end{alignat}
\end{subequations}
We notice that the assumptions~\eqref{eq:Qb_properties} concern the operators $\{P_+^l\}_{l\in\{0{:}2\}}$, except for the second statement in~\eqref{eq:Q_trace_preservation_0} which concerns the operators $\{P_0^l\}_{l\in\{0{:}2\}}$. We will identify in Section~\ref{sec:bnd_weights} sufficient conditions on the basis functions $\{B_{0,r}^l\}_{r\in I_0^l}$ and $\{B_{+,r}^l\}_{r\in I_+^l}$ and on the boundary weights $(\zeta_{0,r}^l)_{r\in I_0^l}$ and $(\zeta_{+,r}^l)_{r\in I_+^l}$ so that the above assumptions~\eqref{eq:Pb_properties} and~\eqref{eq:Qb_properties} are indeed fulfilled (see Lemmas~\ref{lem:cond_zeta0} and~\ref{lem:cond_zeta+}, respectively).

\begin{theorem}[Satisfaction of~\eqref{eq:Pib} for all $l\in\{0{:}2\}$]\label{thm:Satisfaction_of_Pib}
Assume that the properties~\eqref{eq:Pb_properties} and~\eqref{eq:Qb_properties} hold true. Then, the operators $\{\Pi_\partial^l\}_{l\in\{0{:}2\}}$ defined in~\eqref{eq:def_Pib_HO} satisfy \eqref{eq:Pib_trace_preservation}-\eqref{eq:Pib_bound}.
\end{theorem}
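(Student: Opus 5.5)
We verify the four properties \eqref{eq:Pib_trace_preservation}--\eqref{eq:Pib_bound} one at a time from the definition $\Pib^l = P_0^l + P_+^l(I-P_0^l)$, using only \eqref{eq:Pb_properties}, \eqref{eq:Qb_properties}, linearity of the trace operators, and the direct sum \eqref{eq:direct_dec}.

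\emph{Trace preservation.} Let $u\in V_{\tr,p}^l$. By \eqref{eq:direct_dec}, $\tr^l(u)=g_0+g_+$ with $g_0\in\tr^l(V_0^l)$ and $g_+\in\tr^l(V_{+,p}^l)$. Pick $v_0\in V_0^l$ with $\tr^l(v_0)=g_0$ and set $u_+:=u-v_0$. Then $v_0\in V_{\tr,0}^l$ and $u_+\in V_{\tr,+}^l$. Property \eqref{eq:P_trace_preservation} gives $\tr^l(P_0^l v_0)=g_0$. The second statement of \eqref{eq:Q_trace_preservation_0} gives $\tr^l(P_0^l u_+)=0$. Hence $\tr^l(P_0^l u)=g_0$ and $\tr^l(u-P_0^lu)=g_+$, so $u-P_0^lu\in V_{\tr,+}^l$. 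The first statement of \eqref{eq:Q_trace_preservation_0} then yields $\tr^l(P_+^l(u-P_0^lu))=g_+$. Summing, $\tr^l(\Pib^l u)=g_0+g_+=\tr^l(u)$.

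\emph{Commutation under the trace.} Let $l\in\{0{:}1\}$. Then
\[
\tr^{l+1}(d^l\Pib^l u)=\tr^{l+1}(d^lP_0^lu)+\tr^{l+1}\bigl(d^lP_+^l(u-P_0^lu)\bigr).
\]
Applying \eqref{eq:P_trace_commuting} to the first term and \eqref{eq:Q_trace_commuting} to the second gives
\[
\tr^{l+1}(P_0^{l+1}d^lu)+\tr^{l+1}\bigl(P_+^{l+1}(d^lu-d^lP_0^lu)\bigr).
\]
This is not literally $\tr^{l+1}(\Pib^{l+1}d^lu)$, because the argument of $P_+^{l+1}$ contains $d^lP_0^lu$ rather than $P_0^{l+1}d^lu$. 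We need $\tr^{l+1}P_+^{l+1}(w)=0$ for $w:=d^lP_0^lu-P_0^{l+1}d^lu$.

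This is the main obstacle: \eqref{eq:Qb_properties} gives no such statement directly. By \eqref{eq:P_trace_commuting}, $w$ has zero trace, so $w\in\mV^{l+1}$. We would then use that $P_+^{l+1}$ is built from the maps $\alpha_{+,r}^{l+1}$ in \eqref{eq:def_P0+}, and these vanish on $\mV^{l+1}$ by \eqref{eq:alpha_bnd_mV}. So $P_+^{l+1}w=0$.

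If we want to avoid appealing to the $\alpha$-structure, an alternative is available. Note $w\in V^{l+1}_{\tr,+}$ with trace $0\in\tr^{l+1}(V_{+,p}^{l+1})$, so the first statement of \eqref{eq:Q_trace_preservation_0} gives $\tr^{l+1}(P_+^{l+1}w)=\tr^{l+1}(w)=0$. This is exactly what is needed and uses only the stated assumptions. The case $l=2$ requires nothing, since $\tr^3$ of a divergence is fixed by the normal trace; that case only concerns $\Pib^3$ and is handled with it.

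\emph{Support.} Let $\tau\in\mTh^l$. Then $P_0^lu|_\tau=0$ by \eqref{eq:P_support} and $P_+^l(\cdot)|_\tau=0$ by \eqref{eq:Q_support}, so $\Pib^lu|_\tau=0$.

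\emph{Bound.} Let $\tau\in\Th^{l,\partial}$. By \eqref{eq:Q_bound},
\[
\|P_+^l(u-P_0^lu)\|_{L^2(\tau)}\lesssim\|u-P_0^lu\|_{L^2(\es^{1,l,\partial}(\tau))}+h_\tau\|d^l(u-P_0^lu)\|_{L^2(\es^{1,l,\partial}(\tau))}.
\]
On each $\tau'\subset\es^{1,l,\partial}(\tau)$, which lies in $\Th^{l,\partial}$, we bound $\|P_0^lu\|_{L^2(\tau')}$ by \eqref{eq:P_bound}. For $\|d^lP_0^lu\|_{L^2(\tau')}$ we use an inverse inequality, since $P_0^lu$ is piecewise polynomial, which gives $h_{\tau'}\|d^lP_0^lu\|_{L^2(\tau')}\lesssim\|P_0^lu\|_{L^2(\tau')}$, and then \eqref{eq:P_bound} again. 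Mesh regularity gives $h_{\tau'}\simeq h_\tau$ and $\es^{1,l,\partial}(\tau')\subset\es^{2,l,\partial}(\tau)$. Combining these yields \eqref{eq:Pib_bound}.
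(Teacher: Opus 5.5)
Your proof is correct and follows essentially the same route as the paper: the same splitting $u=v_0+(u-v_0)$ into a $V_{\tr,0}^l$ part and a $V_{\tr,+}^l$ part for trace preservation, and the same reduction of the commutation defect to $\tr^{l+1}\bigl(P_+^{l+1}(P_0^{l+1}d^lu-d^lP_0^lu)\bigr)$. That term vanishes by the first statement of \eqref{eq:Q_trace_preservation_0} applied to a zero-trace function, which is your second route, the paper's, and the one that stays within the theorem's hypotheses. The bound uses the same inverse inequality together with $\es^{1,l,\partial}(\tau')\subseteq\es^{2,l,\partial}(\tau)$; your only small omission is the direct term $\|P_0^l u\|_{L^2(\tau)}$ from the triangle inequality, which \eqref{eq:P_bound} controls immediately.
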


\begin{proof}
Proof of \eqref{eq:Pib_trace_preservation}. We need to prove that $\tr^l(\Pib^l(u)) = \tr^l(u)$ for all $u\in V_{\tr,p}^l$ and all $l\in\{0{:}2\}$. 
There is $v\in V^l_p$ so that $\tr^l(u)=\tr^l(v)$ and, owing to~\eqref{eq:direct_dec}, we have $\tr^l(v)=\tr^l(v_0)+\tr^l(v_+)$ with $v_0\in V^l_{0}$ and $v_+\in V^l_{+,p}$. Set $u_0:=v_0$ and $u_+:=u-v_0$ so that $u=u_0+u_+$. Observe that $u_0\in V^l_{\tr,0}$ since $\tr^l(u_0)=\tr^l(v_0)$ and $v_0\in V^l_{0}$. Similarly, $u_+\in V^l_{\tr,+}$ since $\tr^l(u_+)=\tr^l(u-v_0)=\tr^l(v-v_0)=\tr^l(v_+)$ and $v_+\in V^l_{+,p}$. When $p=0$, we have $\tr^l(v_+) = \tr^l(u_+) \equiv 0$. Let us prove that $\tr^l((I-\Pib^l)(u_0))=0$ and $\tr^l((I-\Pib^l)(u_+))=0$. Owing to~\eqref{eq:P_trace_preservation}, $\tr^l(u_0-P_0^l(u_0))=0$. This implies that $u_0-P_0^l(u_0) \in V^l_{\tr,+}$ as well, so that $\tr^l((I-P_+^l)(u_0-P_0^l(u_0)))=0$ by the first statement in~\eqref{eq:Q_trace_preservation_0}. Hence, $\tr^l((I-\Pib^l)(u_0))=0$. Moreover, the second statement in~\eqref{eq:Q_trace_preservation_0} implies that $\tr^l(P_0^l(u_+))=0$. Hence,
$\tr^l((I-\Pib^l)(u_+))=\tr^l((I-P_+^l)(u_+))=0$, where the second equality follows from the
first statement in~\eqref{eq:Q_trace_preservation_0}. This completes the proof of~\eqref{eq:Pib_trace_preservation}. 

Proof of \eqref{eq:Pib_commuting}. We need to prove that $\tr^{l+1}(d^l \Pib^l(u) ) = \tr^{l+1}(\Pib^{l+1}(d^l u))$ for all $u\in V^l$ and all $l\in\{0{:}1\}$. We compute using \eqref{eq:P_trace_commuting} followed by \eqref{eq:Q_trace_commuting},
\begin{align*}
\tr^{l+1} &\big\{ d^l \Pi_\partial^l (u) - \Pi_\partial^{l+1} (d^l u) \big\} \\
&= \tr^{l+1} \big\{ d^l P_0^l (u) + d^l P_+^l (u - P_0^l (u)) 
- P_0^{l+1} (d^l u) -P_+^{l+1} (d^l u - P_0^{l+1} (d^l u)) \big\}\\
&= \tr^{l+1} \big\{ d^l P_+^l (u) - d^l P_+^l (P_0^l (u)) - P_+^{l+1}(d^l u) + P_+^{l+1} (P_0^{l+1} (d^l u))\big\} \\
&= \tr^{l+1} \big\{ P_+^{l+1} (P_0^{l+1} (d^l u) - d^l P_0^l (u))\big\}.
\end{align*}
Since $\tr^{l+1}\big\{ P_0^{l+1} (d^l u) - d^l P_0^l (u)\big\} = 0$, we have $P_0^{l+1} (d^l u) - d^l P_0^l (u) \in V_{\tr,+}^{l+1}$. Hence, using the
first statement in~\eqref{eq:Q_trace_preservation_0}, this gives
\begin{equation*}
\tr^{l+1} (P_+^{l+1} (P_0^{l+1} (d^l u) - d^l P_0^l (u))) = \tr^{l+1} (P_0^{l+1} (d^l u) - d^l P_0^l (u)) = 0.
\end{equation*}
This proves~\eqref{eq:Pib_commuting}.

Proof of \eqref{eq:Pib_support}. The fact that $\Pi_{\partial}^l(u)|_{\tau}=0$ for all $u\in V^l$, all $l\in\{0{:}2\}$, and all $\tau\in\mTh^l$ readily follows from \eqref{eq:P_support} and \eqref{eq:Q_support}.

Proof of \eqref{eq:Pib_bound}. We need to prove that $\|\Pib^l(u)\|_{L^2(\tau)} \lesssim \|u\|_{L^2(\es^{2,l,\partial}(\tau))} + h_\tau \|d^l u \|_{L^2(\es^{2,l,\partial}(\tau))}$ for all $u\in V^l$, all $l\in\{0{:}2\}$, and all $\tau \in \Th^{l,\partial}$.
Using the triangle inequality, \eqref{eq:P_bound}, and \eqref{eq:Q_bound} leads to
\begin{align*}
\|\Pib^l (u)\|_{L^2(\tau)} &\leq \|P_+^l (u- P_0^l(u))\|_{L^2(\tau)} + \|P_0^l(u)\|_{L^2(\tau)} \\
&\lesssim \|u- P_0^l(u)\|_{L^2(\es^{1,l,\partial}(\tau))} + h_\tau \|d^l (u- P_0^l(u))\|_{L^2(\es^{1,l,\partial}(\tau))} + \|P_0^l (u)\|_{L^2(\tau)} \\
&\lesssim \|u\|_{L^2(\es^{1,l,\partial}(\tau))} + h_\tau\|d^l u\|_{L^2(\es^{1,l,\partial}(\tau))} + \|P_0^l (u)\|_{L^2(\es^{1,l,\partial}(\tau))} + h_\tau\|d^l P_0^l(u)\|_{L^2(\es^{1,l,\partial}(\tau))} \\
&\lesssim \|u\|_{L^2(\es^{1,l,\partial}(\tau))} + h_\tau\|d^l u\|_{L^2(\es^{1,l,\partial}(\tau))} + \|P_0^l (u)\|_{L^2(\es^{1,l,\partial}(\tau))},
\end{align*}
where we used an inverse inequality and the mesh regularity to bound $\|d^l P_0^l(u)\|_{L^2(\es^{1,l,\partial}(\tau))}$. Finally, to estimate $\|P_0^l (u)\|_{L^2(\es^{1,l,\partial}(\tau))}$, we use \eqref{eq:P_bound} and the fact that $\es^{1,l,\partial}(\tau') \subseteq \es^{2,l,\partial}(\tau)$ for all $\tau'\in\Th$ such that $\tau'\subset \clos(\es^{1,l,\partial}(\tau))$.
\end{proof}

\subsection{Construction of the operator $\Pib^3$}\label{sec:Pib3_construction}
Since $\tr^3$ corresponds to integration over $\Omega$ and $V_{\tr,p}^3 = V^3$ for all $p\ge0$, we can take $\Pi_\partial^3$ to be the ``canonical projection'' onto $V_0^3$, i.e., we set $\Pi_\partial^3:V^3 \to V_0^3$ so that
\begin{equation} \label{eq:def_Pi3}
\Pi_\partial^3 (u) := \sum_{\tau\in\Th} \alpha_\tau^3(u) B^3_\tau,
\end{equation}
where $\alpha_\tau^3(u):=\int_\tau u$, $B^3_\tau:=|\tau|^{-1}\chi_\tau$, and $\chi_\tau$ is the characteristic function of $\tau$ for all $\tau\in\Th$. We make the following assumptions on the operators $P_0^2$ and $P_+^2$:
\begin{subequations}\label{eq:l=2_properties}
\begin{alignat}{3}
    &\tr^{3} (\dive (P_0^{2}(\bu))) = \tr^{3}(\dive(\bu)), \quad && \forall \bu\in \bV^{2},\label{eq:P_trace_l=2}\\ 
    &\tr^{3}(\dive(P_+^2(\bu))) = 0, \quad && \forall \bu\in \bV^2.\label{eq:Q_trace_l=2}
\end{alignat}
\end{subequations}
We will identify in Section \ref{sec:bnd_weights} sufficient conditions on the lowest-order boundary weights $(\zeta_{0,r}^l)_{r\in I_0^l}$ and the higher-order basis functions $\{B_{+,r}^l\}_{r\in I_+^l}$ so that the assumptions~\eqref{eq:l=2_properties} are indeed fulfilled (see Lemmas~\ref{lem:cond_zeta0} and~\ref{lem:cond_zeta+}, respectively).

\begin{lemma}[Satisfaction of~\eqref{eq:Pib} for $l=3$]\label{lem:Pi3_partial} 
Let $\Pib^3$ be defined in~\eqref{eq:def_Pi3}. Then, the statements in~\eqref{eq:Pib} for $l=3$ hold true. 
\end{lemma}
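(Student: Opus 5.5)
The statements in \eqref{eq:Pib} that involve $l=3$ are three: the trace preservation \eqref{eq:Pib_trace_preservation} for $l=3$, the bound \eqref{eq:Pib3_bound}, and the commuting property \eqref{eq:Pib_commuting} for $l=2$, since its right-hand side involves $\Pib^3$. (The support property \eqref{eq:Pib_support} and the bound \eqref{eq:Pib_bound} are only required for $l\in\{0{:}2\}$.) The plan is to check each of the three directly from the definition \eqref{eq:def_Pi3}. For the commuting property we also use the assumptions \eqref{eq:l=2_properties} on $P_0^2$ and $P_+^2$.

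\emph{Trace preservation.} Recall that $\tr^3(u)=\int_\Omega u$ and $V^3_{\tr,p}=V^3$. For $u\in V^3$ we compute
\[
\tr^3(\Pib^3(u))=\sum_{\tau\in\Th}\alpha^3_\tau(u)\,|\tau|^{-1}\int_\Omega\chi_\tau=\sum_{\tau\in\Th}\int_\tau u=\int_\Omega u=\tr^3(u),
\]
because the cells of $\Th$ partition $\Omega$ up to sets of measure zero.

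\emph{$L^2$ bound.} On each $\tau\in\Th$ we have $\Pib^3(u)|_\tau=|\tau|^{-1}\int_\tau u$. Hence $\|\Pib^3(u)\|_{L^2(\tau)}=|\tau|^{-1/2}\big|\int_\tau u\big|$, and the Cauchy--Schwarz inequality bounds this by $\|u\|_{L^2(\tau)}$. This gives \eqref{eq:Pib3_bound} with constant one.

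\emph{Commuting property for $l=2$.} Let $\bu\in\bV^2$. By \eqref{eq:def_Pib_HO}, $\Pib^2(\bu)=P_0^2(\bu)+P_+^2(\bu-P_0^2(\bu))$. Here $\bu-P_0^2(\bu)\in\bV^2$, because $P_0^2$ maps into $V^2_0\subset\bV^2$. By linearity of $\tr^3\circ\dive$, the first term contributes $\tr^3(\dive\bu)$ by \eqref{eq:P_trace_l=2}. The second term contributes $0$ by \eqref{eq:Q_trace_l=2} applied to $\bu-P_0^2(\bu)$. So $\tr^3(\dive\Pib^2(\bu))=\tr^3(\dive\bu)$. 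Since $\dive\bu\in V^3=V^3_{\tr,p}$, the trace preservation just established gives $\tr^3(\Pib^3(\dive\bu))=\tr^3(\dive\bu)$, which closes the argument.

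There is no genuine obstacle: all the substance lies in the assumptions \eqref{eq:l=2_properties}, which are verified later in Lemmas~\ref{lem:cond_zeta0} and~\ref{lem:cond_zeta+}. The only points needing care are to apply \eqref{eq:Q_trace_l=2} to the composite argument $\bu-P_0^2(\bu)$ rather than to $\bu$ itself, and to confirm that this argument lies in $\bV^2$.
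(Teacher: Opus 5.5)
Your proof is correct and follows the paper's argument essentially step by step. It uses the same cellwise integral computation for trace preservation and the same Cauchy--Schwarz bound on each $\tau$. It also reduces the $l=2$ commuting property to $\tr^3(\dive(\Pib^2(\bu)))=\tr^3(\dive(\bu))$ in the same way, via \eqref{eq:P_trace_l=2} and via \eqref{eq:Q_trace_l=2} applied to $\bu-P_0^2(\bu)$.
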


\begin{proof}
Proof of \eqref{eq:Pib_trace_preservation} for $l=3$. We need to show that 
$\tr^3(\Pi_\partial^3 (u)) = \tr^3(u)$ for all $u\in V^3$. Since $B^3_\tau$ is supported in $\tau$ and $\int_\Omega B^3_\tau = \int_\tau B^3_\tau=1$, the expected identity follows from
\begin{equation*}
\int_\Omega \Pi_\partial^3 (u) = \sum_{\tau\in\Th} \alpha^3_\tau(u) \int_\Omega B^3_\tau = \sum_{\tau\in\Th} \int_\tau u  = \int_\Omega u.
\end{equation*}
    
Proof of \eqref{eq:Pib_commuting} for $l=2$. We need to show that $\tr^3(\dive(\Pi_\partial^2(\bu))) = \tr^3(\Pi_\partial^3 (\dive(\bu)))$ for all $\bu \in \bV^2$. By \eqref{eq:Pib_trace_preservation} for $l=3$, since $\dive(\bu)\in V^3$, it suffices to show that $\tr^3(\dive(\Pi_\partial^2(\bu))) = \tr^3(\dive(\bu))$. This is indeed the case since, owing to~\eqref{eq:l=2_properties}, we have
\begin{equation*}
    \tr^3(\dive(\Pi_\partial^2(\bu))) = \tr^3(\dive(P_0^2(\bu))) + \tr^3(\dive(P_+^2(\bu-P_0^2(\bu)))) = \tr^3(\dive(\bu)).
\end{equation*}

Proof of \eqref{eq:Pib_bound} for $l=3$. We need to show that $\|\Pi_\partial^3(u)\|_{L^2(\tau)} \lesssim \|u\|_{L^2(\tau)}$ for all $u\in V^3$ and all $\tau\in\Th$. 
Since $\Pi_\partial^3 (u)|_\tau = \alpha_\tau^3(u)B^3_\tau$, the assertion readily follows from the Cauchy--Schwarz inequality which gives $|\alpha^3_\tau(u)| \le |\tau|^\frac{1}{2} \|u\|_{L^2(\tau)}$ and $\|B_\tau^3\|_{L^2(\tau)}\le |\tau|^{-\frac12}$.
\end{proof}

\subsection{Conditions on the basis functions and on the boundary weights}
\label{sec:bnd_weights}

The goal of this section is to identify sufficient conditions on the basis functions $\{B_{0,r}^l\}_{r\in I_0^l}$ and $\{B_{+,r}^l\}_{r\in I_+^l}$ and on the boundary weights $(\zeta_{0,r}^l)_{r\in I_0^l}$ and $(\zeta_{+,r}^l)_{r\in I_+^l}$, for all $l\in\{0{:}2\}$, so that the resulting operators $\{P^l_0\}_{l\in\{0{:}2\}}$ and $\{P^l_+\}_{l\in\{0{:}2\}}$ defined in~\eqref{eq:def_P0+} satisfy the expected properties~\eqref{eq:Pb_properties} and~\eqref{eq:Qb_properties}, respectively, as well as \eqref{eq:l=2_properties}.
We will present concrete examples fulfilling these assumptions in Sections~\ref{sec:lowest_order} and~\ref{sec:high_order}, respectively. We first detail the lowest-order case, and then deal with the higher-order case.

\subsubsection{Lowest-order case}

We make the following assumptions on the basis functions $\{B_{0,r}^l\}_{r\in I_0^l}$ and the boundary weights $(\zeta_{0,r}^l)_{r\in I_0^l}$.

\begin{assumption}[Lowest-order basis functions]
The basis functions $\{B_{0,r}^l\}_{r\in I_0^l}$ satisfy the following two properties.
\textup{(i)} Relation to differential operators: For all $l\in\{0{:}1\}$, there exist ``derivative coefficients'' $(\kappa_0^{r',r})_{(r',r)\in I_0^{l+1}\times I_0^l}$ so that 
\begin{subequations} \label{eq:ass_B0}
\begin{equation}
\tr^{l+1} \big( d^l B_{0,r}^l \big) = \sum_{r'\in I_0^{l+1}}\kappa_0^{r',r} \tr^{l+1}(B_{0,r'}^{l+1}),
\quad\forall r\in I_0^l. \label{eq:d_varphi_relation0}
\end{equation}
\textup{(ii)} Support and norm: For each $l\in\{0{:}2\}$ and $r\in I_0^l$, there exists a boundary geometric object $\sigma_{0,r}^l \in \Delta_h^{l,\partial}$ and a real number $\beta_{0,r}^l$ so that
\begin{equation} \label{eq:support_B0}
\supp(B_{0,r}^l) \subset \clos(\st(\sigma_{0,r}^l)), \qquad
\|B_{0,r}^l\|_{L^2(\Omega)} \lesssim h_{\sigma_{0,r}^l}^{\beta_{0,r}^l}.
\end{equation}
\end{subequations}
\end{assumption}

\begin{assumption}[Lowest-order boundary weights]
The boundary weights $(\zeta_{0,r}^l)_{r\in I_0^l}$ satisfy the following properties: 
\textup{(i)} Relation to basis functions: For all $l\in\{0{:}2\}$,
\begin{subequations} \label{eq:ass_zeta0} \begin{equation} \label{eq:Kron_zeta}
(\zeta_{0,r}^l,\tr^l(B_{0,r'}^l))_\Gamma = \delta_{r,r'}, \quad \forall r,r'\in I_0^l.
\end{equation}
\textup{(ii)} Relation to derivative coefficients and surface differential operators:
\begin{alignat}{2}
-\sdive (\bzeta_{0,r'}^1) &= \sum_{r \in I_0^0} \kappa_0^{r',r} \zeta_{0,r}^0, \quad &&\forall r'\in I_0^1, \label{deltazeta1}\\
\srot (\zeta_{0,r'}^2) &= \sum_{r\in I_0^1} \kappa_0^{r',r} \bzeta_{0,r}^1, \quad &&\forall r'\in I_0^2.\label{deltazeta2} 
\end{alignat}
\textup{(iii)} Support and boundedness: For all $l\in\{0{:}2\}$ and all $r\in I_0^l$,
recalling the extension $Y_{0,r}^l$ defined in~\eqref{eq:trace_Y_zeta}, we have 
\begin{equation} \label{eq:support_Y0}
\supp(Y_{0,r}^l) \subseteq \clos(\es(\sigma_{0,r}^l)), \qquad
h_{\sigma_{0,r}^l} \|d^{2-l} Y_{0,r}^l\|_{L^2(\Omega)} \lesssim \|Y_{0,r}^l\|_{L^2(\Omega)}  \lesssim h_{\sigma_{0,r}^l}^{1-\beta_{0,r}^l}.
\end{equation}
\end{subequations}
\textup{(iv)} Partition of unity on $\Gamma$ when $l=2$:
\begin{equation}\label{eq:l=2pou}
    \sum_{r\in I_0^2} \tr^3(\dive(\bB_{0,r}^2))\, \zeta_{0,r}^2 \equiv 1\:\: \text{on } \Gamma.
\end{equation}
\end{assumption}

\begin{lemma}[Fulfillment of~\eqref{eq:Pb_properties}] \label{lem:cond_zeta0}
Assume~\eqref{eq:linear_dec}, \eqref{eq:ass_B0} and~\eqref{eq:ass_zeta0}. 
Then, the operators $\{P_0^l\}_{l\in\{0{:}2\}}$ defined in~\eqref{eq:def_P0+} satisfy the expected properties~\eqref{eq:Pb_properties} and \eqref{eq:P_trace_l=2}.
\end{lemma}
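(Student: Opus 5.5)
We verify the five properties \eqref{eq:P_trace_preservation}, \eqref{eq:P_trace_commuting}, \eqref{eq:P_support}, \eqref{eq:P_bound} and \eqref{eq:P_trace_l=2} one at a time. The main tools are Lemma~\ref{lem:pties_alpha}, which gives $\alpha_{0,r}^l(w)=(\tr^l(w),\zeta_{0,r}^l)_\Gamma$ for $w\in V^l_{\tr,p}$ and for $w\in V^l_p$, together with the duality \eqref{eq:Kron_zeta}.

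\emph{Trace preservation \eqref{eq:P_trace_preservation}.} Let $u\in V^l_{\tr,0}$. By \eqref{eq:tr_Vl_span}, $\tr^l(u)=\sum_{r'}c_{r'}\tr^l(B^l_{0,r'})$ for some coefficients $c_{r'}$. Since $V^l_{\tr,0}\subset V^l_{\tr,p}$, \eqref{eq:alpha_bnd_tr} and \eqref{eq:Kron_zeta} give $\alpha_{0,r}^l(u)=c_r$. Hence $\tr^l(P_0^l u)=\tr^l(u)$.

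\emph{Commutation \eqref{eq:P_trace_commuting}.} By \eqref{eq:d_varphi_relation0},
\[
\tr^{l+1}(d^lP_0^lu)=\sum_{r'}\Big(\sum_r\kappa_0^{r',r}\alpha^l_{0,r}(u)\Big)\tr^{l+1}(B^{l+1}_{0,r'}).
\]
It therefore suffices to prove $\alpha^{l+1}_{0,r'}(d^lu)=\sum_r\kappa_0^{r',r}\alpha^l_{0,r}(u)$ for all $u\in V^l$. Both sides are continuous on $V^l$, so by density it is enough to check $u\in C^\infty(\overline\Omega)$. In that case \eqref{eq:alpha_bnd_C1} reduces the left side to a boundary pairing, and we use \eqref{eq:trace_d_commuting}.
\begin{itemize}
\item For $l=0$: $(\tr^1(\grad u),\bzeta^1_{0,r'})_\Gamma=(\sgrad\tr^0u,\bzeta^1_{0,r'})_\Gamma=-(\tr^0u,\sdive\bzeta^1_{0,r'})_\Gamma$ by \eqref{eq:sdive_adjoint}. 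This equals $\sum_r\kappa_0^{r',r}(\tr^0 u,\zeta^0_{0,r})_\Gamma$ by \eqref{deltazeta1}.
\item For $l=1$, one has to be careful about the pairing convention. Here $\alpha^1$ pairs $\tr^1(\bw)$ against $\bzeta^1$, since $\tr_\perp^1(\bY)$ appears in the integration by parts. We get $(\tr^2\curl\bw,\zeta^2)_\Gamma=(\scurl\tr^1\bw,\zeta^2)_\Gamma=(\tr^1\bw,\srot\zeta^2)_\Gamma$ by \eqref{eq:scurl_adjoint}, and then apply \eqref{deltazeta2}.
\end{itemize}
I expect the main technical care to lie here: the density argument, and making sure the pairing in \eqref{eq:alpha_bnd_C1} for $l=1$ matches the sign and rotation conventions of $\tr^1$ versus $\tr^1_\perp$.

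\emph{Support \eqref{eq:P_support}.} By \eqref{eq:support_B0}, each $B^l_{0,r}$ is supported in $\clos(\st(\sigma^l_{0,r}))$ with $\sigma^l_{0,r}\in\Delta^{l,\partial}_h$. Hence $B^l_{0,r}$ vanishes on every $\tau\in\mTh^l$, and so does $P_0^lu$.

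\emph{Bound \eqref{eq:P_bound}.} On $\tau\in\Th^{l,\partial}$, only indices $r$ with $\tau\subset\clos(\st(\sigma^l_{0,r}))$ contribute, and there are boundedly many by shape regularity. For such $r$ we estimate from \eqref{eq:alpha_def} by Cauchy--Schwarz:
\[
|\alpha^l_{0,r}(u)|\le \|u\|_{L^2(\es(\sigma))}\|d^{2-l}Y\|+\|d^lu\|_{L^2(\es(\sigma))}\|Y\| \lesssim h_\sigma^{-\beta}\big(\|u\|+h_\sigma\|d^lu\|\big)_{L^2(\es(\sigma))},
\]
where $\sigma=\sigma^l_{0,r}$, $\beta=\beta^l_{0,r}$, $Y=Y^l_{0,r}$, and we used \eqref{eq:support_Y0}. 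Multiplying by $\|B^l_{0,r}\|\lesssim h_\sigma^{\beta}$ cancels the powers. It remains to note that $\es(\sigma)\cap\supp Y\subset\es^{1,l,\partial}(\tau)$. This holds because $\tau\ni\sigma$, and $\supp Y\subset\clos(\es(\sigma))$ is covered by elements touching $\sigma$. Here I would use (or verify) that these elements lie in $\Omega^{l,\partial}$; otherwise one enlarges the region slightly. Finally $h_\sigma\simeq h_\tau$.

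\emph{Divergence integral \eqref{eq:P_trace_l=2}.} We have $\tr^3(\dive P_0^2\bu)=\sum_r\alpha^2_{0,r}(\bu)\tr^3(\dive\bB^2_{0,r})$. By density, for smooth $\bu$ this equals $(\tr^2\bu,\sum_r\tr^3(\dive\bB^2_{0,r})\zeta^2_{0,r})_\Gamma=\int_\Gamma\bu\cdot\bn$ by \eqref{eq:l=2pou}. The divergence theorem turns this into $\int_\Omega\dive\bu$. Both sides are continuous in $\bV^2$, which concludes.
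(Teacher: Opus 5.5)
Your proposal follows the paper's proof step for step. Trace preservation comes from \eqref{eq:alpha_bnd_tr} and \eqref{eq:Kron_zeta}. Commutation uses the density argument of Lemma~\ref{lem:alpha_der}, and your $l=1$ convention check is right: $\alpha^1_r(\bw)=(\tr^1(\bw),\bzeta^1_r)_\Gamma$. Support follows from \eqref{eq:support_B0}, \eqref{eq:P_bound} from the Cauchy--Schwarz/scaling estimate, and \eqref{eq:P_trace_l=2} from \eqref{eq:l=2pou} plus the divergence theorem. For that last property the paper works with $\sum_r \tr^3(\dive(\bB^2_{0,r}))Y^2_{0,r}$, which has trace one; this is the same argument. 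The only step you leave open is whether the cells in $\supp(Y^l_{0,r})$ lie in $\Omega^{l,\partial}$, and your hesitation there is justified.

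From \eqref{eq:support_Y0} you only know that such a cell $\tau'$ meets $\sigma^l_{0,r}$, not that it contains it. The paper closes this step by asserting $\tau'\in\Th^{l,\partial}$ ``since $\sigma^l_{0,r}\in\Delta_h^{l,\partial}$''. That tacitly treats $\tau'$ as containing $\sigma^l_{0,r}$.
\begin{itemize}
\item For $l=0$ this is automatic, because $\es(v)=\st(v)$.
\item For $l\in\{1,2\}$ a cell can meet a boundary edge or face in a single vertex and contain no boundary edge or face.
\end{itemize}
So, from the stated hypotheses, neither your argument nor the paper's yields \eqref{eq:P_bound} on $\es^{1,l,\partial}(\tau)$. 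What it does yield is your fallback: the same bound on $\es^{1,0,\partial}(\tau)$. Indeed, any cell meeting $\sigma^l_{0,r}\subset\tau$ shares a vertex with it, hence lies in $\Th^{0,\partial}$ and in $\clos(\es(\tau))$. This weaker localization still suffices for global stability and for the lifting results. It does, however, coarsen \eqref{eq:Pi_L2bound_p}--\eqref{eq:Pi_Hdbound_p} and the fluctuation estimate \eqref{eq:stabL2_fluc}.

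For the concrete extensions of Section~\ref{sec:lowest_order}, extra structure recovers the sharp version for $l=1$ but not for $l=2$.
\begin{itemize}
\item For $l=1$: a N\'ed\'elec field on $\ThWF$ with all bulk dofs zero vanishes on every Worsey--Farin subcell having no edge on $\Gamma$. Hence $\bY^1_{0,e}$ is supported in cells of $\Th^{1,\partial}$.
\item For $l=2$: $Y^2_{0,f}$ is a Lagrange function whose boundary dofs include vertex values. By \eqref{eq:l=2pou}, at every boundary vertex $v$ some $\zeta^2_{0,f}(v)\neq0$. That $Y^2_{0,f}$ is therefore nonzero on cells meeting $\Gamma$ only at $v$, which do not belong to $\Th^{2,\partial}$. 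The $\Omega^{2,\partial}$-localization is thus not available by this route.
\end{itemize}
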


\begin{proof}
Proof of~\eqref{eq:P_trace_preservation}. We need to prove that 
$\tr^l(P_0^l(u))=\tr^l(u)$ for all $u \in V_{\tr,0}^l$ and all $l\in\{0{:}2\}$.
We observe that $\tr^l(u-P_0^l(u)) \in \tr^l(V_0^l)$ by assumption on $u$ and by 
construction of $P_0^l$. Therefore, recalling~\eqref{eq:tr_Vl_span} and~\eqref{eq:Kron_zeta}, to show that $\tr^l(u-P_0^l(u))$ vanishes, it suffices
to show that 
\begin{equation} \label{eq:proof_5.9.a}
(\zeta_{0,r}^l,\tr^l(u-P_0^l(u)))_\Gamma=0 \quad \forall r\in I_0^l.
\end{equation} 
By the definition
of $P_0^l$ and invoking~\eqref{eq:Kron_zeta}, we infer that 
$(\zeta_{0,r}^l,\tr^l(P_0^l(u)))_\Gamma = \alpha_{0,r}^l(u)$. Moreover, $(\zeta_{0,r}^l,\tr^l(u))_\Gamma = \alpha_{0,r}^l(u)$ owing to~\eqref{eq:alpha_bnd_tr} since $u \in V_{\tr,0}^l \subset V_{\tr,p}^l$. This proves~\eqref{eq:proof_5.9.a}.

Proof of~\eqref{eq:P_trace_commuting}. We need to prove that 
$\tr^{l+1} (P_0^{l+1}(d^l u)) = \tr^{l+1} (d^l P_0^l (u))$ for all $u\in V^l$ and all $l\in\{0{:}1\}$.
We first compute using \eqref{eq:d_varphi_relation0},
\begin{equation*}
\tr^{l+1}(d^l P_0^l(u)) = \sum_{r\in I_0^l} \alpha_{0,r}^l(u) \tr^{l+1} (d^l B_{0,r}^l) = \sum_{r\in I_0^l }\sum_{r'\in I_0^{l+1}} \alpha_{0,r}^l(u) \kappa_0^{r',r} \tr^{l+1} (B_{0,r'}^{l+1}).
\end{equation*}
On the other hand, using Lemma~\ref{lem:alpha_der} below (which uses~\eqref{deltazeta1}-\eqref{deltazeta2}), we infer that
\begin{equation*}
\tr^{l+1} (P_0^{l+1}(d^l u)) = \sum_{r'\in I_0^{l+1}}\alpha_{0,r'}^{l+1}(d^l u) \tr^{l+1}(B_{0,r'}^{l+1}) = \sum_{r'\in I_0^{l+1}}\sum_{r\in I_0^l} \kappa_0^{r',r}\alpha_{0,r}^l(u) \tr^{l+1}(B_{0,r'}^{l+1}).
\end{equation*}
Swapping the order of summations proves the expected commuting identity.

Proof of~\eqref{eq:P_support}. We need to show that $P_{0}^l(u)|_\tau = 0$ for all $u\in V^l$, all $l\in\{0{:}2\}$, and all $\tau\in\mTh^l$. By definition of $\mTh^l$, $\tau$ does not lie in $\clos(\st(\sigma_{0, r}^l))$ for any $r\in I_0^l$. Therefore, by \eqref{eq:support_B0}, $B_{0,r}^l|_\tau = 0$ for all $r\in I_0^l$, which implies \eqref{eq:P_support}.

Proof of~\eqref{eq:P_bound}. We need to prove that 
$\|P_0^l(u)\|_{L^2(\tau)} \lesssim \|u\|_{L^2(\es^{1,l,\partial}(\tau))} + h_\tau \|d^l u\|_{L^2(\es^{1,l,\partial}(\tau))}$ for all $u\in V^l$, all $l\in\{0{:}2\}$, and all $\tau\in\Th^{l,\partial}$. Let us set $I_{0}^l(\tau):=\{r\in I_0^l\::\: \tau \subset \clos(\st(\sigma_{0,r}^l))\}$ and $\omega_{0,r}^l:=\es(\sigma_{0,r}^l)$. The definition~\eqref{eq:def_P0+} of $P_0^l$, together with the triangle inequality and~\eqref{eq:support_B0} gives
\begin{align*}
\|P_0^l(u)\|_{L^2(\tau)} &\le \sum_{r\in I_0^l(\tau)} |\alpha_{0,r}^l(u)| \, \|B_{0,r}^l\|_{L^2(\tau)}
 \\
&\le \sum_{r\in I_{0}^l(\tau)} \Big\{ \|u\|_{L^2(\omega_{0,r}^l)} \|d^{2-l}Y_{0,r}^l\|_{L^2(\Omega)} +
\|d^lu\|_{L^2(\omega_{0,r}^l)} \|Y_{0,r}^l\|_{L^2(\Omega)} \Big\} \|B_{0,r}^l\|_{L^2(\tau)},
\end{align*}
where we used the definition~\eqref{eq:alpha_def} of the linear maps $\alpha_{0,r}^l$,
the assumption~\eqref{eq:support_Y0} to localize the norms on $u$ and $d^lu$,
and the Cauchy--Schwarz inequality.
Invoking the bounds from~\eqref{eq:support_B0} and~\eqref{eq:support_Y0} together with the shape-regularity of the mesh, we obtain
\[
\|P_0^l(u)\|_{L^2(\tau)} \lesssim \sum_{r\in I_{0}^l(\tau)}
\Big\{ \|u\|_{L^2(\omega_{0,r}^l)} + h_\tau \|d^lu\|_{L^2(\omega_{0,r}^l)} \Big\}.
\]
To conclude the proof of \eqref{eq:P_bound}, we observe that any mesh cell $\tau'\subset \clos(\omega_{0,r}^l)$ for some $r\in I_0^l(\tau)$ is in $\clos(\es^{1,l,\partial}(\tau))$ (recall that $\es^{1,l,\partial}(\tau):=\es(\tau)\cap \Omega^{l,\partial}$, see~\eqref{eq:def_es_close_bnd}). Indeed, $\tau'\subset \clos(\es(\tau))$ since $\sigma_{0,r}^l\in \tau \cap \tau'$, so that $\tau \cap \tau' \ne \emptyset$. Moreover, $\tau'\in \Th^{l,\partial}$ since $\sigma_{0,r}^l \in \Delta_h^{l,\partial}$. Hence, $\tau' \subset \clos(\esb^{1,l,\partial})$. 

Proof of \eqref{eq:P_trace_l=2}. We need to prove that $\tr^3(\dive(P_0^2(\bu))) = \tr^3(\dive(\bu))$ for all $\bu\in\bV^2$. Setting $Y^2 := \sum_{r\in I_0^2} \tr^3(\dive(\bB_{0,r}^2))Y_{0,r}^2 \in V^0$, \eqref{eq:trace_Y_zeta} and \eqref{eq:l=2pou} imply that $\tr^0(Y^2) = 1$ on $\Gamma$. A similar reasoning to the proof of \eqref{eq:alpha_bnd_mV} then proves that $(\bu, \grad(Y^2))_{\Omega} + (\dive(\bu),Y^2)_{\Omega} = \tr^3(\dive(\bu))$. This gives
\begin{align*}
    \tr^3(\dive(\bu)) &= \sum_{r\in I_0^2} \left((\bu, \grad(Y_{0,r}^2))_{\Omega} + (\dive(\bu), Y_{0,r}^2)_{\Omega}\right)\tr^3(\dive(\bB_{0,r}^2))\\
    &= \sum_{r\in I_0^2} \alpha_{0,r}^2(\bu) \tr^3(\dive(\bB_{0,r}^2)) = \tr^3(\dive(P_0^2(\bu))).
\end{align*}
This concludes the proof.
\end{proof}

\begin{lemma}[Action of $\alpha_{0,r}^l$ on derivatives] \label{lem:alpha_der}
Assume~\eqref{deltazeta1}-\eqref{deltazeta2}.
The following holds for all $u\in V^l$, all $l\in\{0{:}1\}$, and all $r'\in I_0^{l+1}$:
\begin{equation}\label{eq:alpha_commuting}
\alpha_{0,r'}^{l+1}(d^l u) = \sum_{r\in I_0^l} \kappa_0^{r',r} \alpha_{0,r}^l(u).
\end{equation}
\end{lemma}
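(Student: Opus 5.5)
The plan is to show that both sides of \eqref{eq:alpha_commuting} are continuous linear functionals on $V^l$ with respect to the graph norm. They then agree once they agree on a dense subspace, and I take $C^\infty(\overline{\Omega})$ (or $C^1(\overline{\Omega})$), which is dense in $H^1(\Omega)$ and in $\bH(\curl,\Omega)$ for Lipschitz domains. Continuity is immediate from \eqref{eq:alpha_def}. $\alpha_{0,r}^l$ is bounded in the $V^l$ graph norm because $Y_{0,r}^l$ and $d^{2-l}Y_{0,r}^l$ are in $L^2$. Also $\alpha_{0,r'}^{l+1}(d^l u)$ reduces to a single term, $(d^l u, d^{1-l}Y_{0,r'}^{l+1})_\Omega$ with appropriate sign, since $d^{l+1}d^l u=0$, and this is bounded by $\|d^l u\|_{L^2}$.

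For smooth $u$, the identity \eqref{eq:alpha_bnd_C1} (applicable since $d^l u$ is also smooth) lets me work purely on the boundary. Consider first $l=0$:
\[
\alpha_{0,r'}^1(\grad u) = (\tr^1(\grad u), \bzeta_{0,r'}^1)_\Gamma = (\sgrad(\tr^0 u), \bzeta_{0,r'}^1)_\Gamma ,
\]
where I used \eqref{eq:trace_d_commuting}. Then \eqref{eq:sdive_adjoint} gives $-(\tr^0 u, \sdive \bzeta_{0,r'}^1)_\Gamma$, which is legitimate since $\bzeta_{0,r'}^1\in\bH(\sdive,\Gamma)$ and $\tr^0 u\in H^1(\Gamma)$. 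Applying \eqref{deltazeta1} turns this into $\sum_r \kappa_0^{r',r}(\tr^0 u,\zeta_{0,r}^0)_\Gamma=\sum_r\kappa_0^{r',r}\alpha_{0,r}^0(u)$.

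One point needs care here: the inner product $(\tr^1(\cdot),\bzeta^1)_\Gamma$ in \eqref{eq:alpha_bnd_C1} must be read consistently with the twisted trace in \eqref{eq:trace_Y_zeta}. Integration by parts of the curl form gives $(\bn\times\bw,\bY)_\Gamma$-type terms, i.e.\ $(\tr^1\bw, \tr^1_\perp \bY)$ up to sign. I will use \eqref{eq:alpha_bnd_C1} exactly as stated, with $\bzeta^1_{0,r}$ tangential, so no rotation ambiguity enters.

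For $l=1$:
\[
\alpha_{0,r'}^2(\curl\bu) = (\tr^2(\curl \bu),\zeta_{0,r'}^2)_\Gamma = (\scurl(\tr^1\bu),\zeta_{0,r'}^2)_\Gamma ,
\]
again by \eqref{eq:trace_d_commuting}. By \eqref{eq:scurl_adjoint} this equals $(\tr^1\bu,\srot\zeta_{0,r'}^2)_\Gamma$, and \eqref{deltazeta2} turns it into $\sum_r\kappa_0^{r',r}(\tr^1\bu,\bzeta_{0,r}^1)_\Gamma=\sum_r\kappa_0^{r',r}\alpha_{0,r}^1(\bu)$.

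The main obstacle is justifying the surface integrations by parts on the polyhedral, non-smooth surface $\Gamma$. Because the weights are piecewise polynomial, \eqref{eq:sdive_adjoint} and \eqref{eq:scurl_adjoint} apply as stated in the paper given the regularity in \eqref{eq:zeta_space}. The density step then closes the argument for general $u\in V^l$.
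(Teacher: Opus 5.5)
Your proposal is correct and follows the paper's proof: you verify the identity for smooth $u$ through \eqref{eq:alpha_bnd_C1}, the trace--derivative commuting relations, the surface adjoint identities, and \eqref{deltazeta1}--\eqref{deltazeta2}, and then extend to all of $V^l$ by density. Your explicit check that both sides are continuous in the graph norm (using $d^{l+1}d^l=0$), and your use of $C^\infty(\overline{\Omega})$ so that $d^l u$ is itself smooth enough for \eqref{eq:alpha_bnd_C1}, simply spell out what the paper's ``by density'' leaves implicit.
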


\begin{proof}
Proof for $l=0$. Consider first $u\in C^1(\overline{\Omega})$. We obtain, for all $r'\in I_0^1$,
\begin{alignat*}{2}
\alpha_{0,r'}^1(\grad(u)) &= (\tr^1(\grad(u)), \bzeta_{0,r'}^1)_{\Gamma} \quad &&\text{by \eqref{eq:alpha_bnd_C1}}\\
&=(\sgrad(\tr^0(u)), \bzeta_{0,r'}^1)_{\Gamma} \quad &&\text{by \eqref{eq:trace_d_commuting}}\\
&= ( \tr^0(u), -\sdive(\bzeta_{0,r'}^1))_{\Gamma} \quad &&\text{by \eqref{eq:sdive_adjoint}}\\
&= \sum_{r \in I_{0,r}^0} \kappa_0^{r',r} ( \tr^0(u),  \zeta_{0,r}^0)_{\Gamma} \quad &&\text{by \eqref{deltazeta1}}\\
&= \sum_{r\in I_{0,r}^0} \kappa_0^{r',r} \alpha_{0,r}^0(u) \quad &&\text{by \eqref{eq:alpha_bnd_C1}}.
\end{alignat*}
By density, we conclude that~\eqref{eq:alpha_commuting} holds true for $l=0$. 

Proof for $l=1$. Consider first $\bu\in \bC^1(\overline{\Omega})$. We obtain, for all $r'\in I_0^2$,
\begin{alignat*}{2}
\alpha_{0,r'}^2(\curl(\bu)) &= (\tr^2 (\curl(\bu)), \zeta_{0,r'}^2)_{\Gamma} \quad &&\text{by \eqref{eq:alpha_bnd_C1}}\\
&= (\scurl(\tr^1(\bu)), \zeta_{0,r'}^2)_{\Gamma} \quad &&\text{by \eqref{eq:trace_d_commuting}}\\
&= ( \tr^1(\bu), \srot(\zeta_{0,r'}^2))_{\Gamma} \quad &&\text{by \eqref{eq:scurl_adjoint}}\\
&= \sum_{r \in I_{0,r}^1} \kappa_0^{r',r} ( \tr^1(\bu),  \bzeta_{0,r}^1)_{\Gamma} \quad &&\text{by \eqref{deltazeta2}}\\
&= \sum_{r\in I_{0,r}^1} \kappa_0^{r',r} \alpha_{0,r}^1(\bu) \quad &&\text{by \eqref{eq:alpha_bnd_C1}}.
\end{alignat*}
By density, we conclude that~\eqref{eq:alpha_commuting} holds true for $l=1$. 
\end{proof}

\subsubsection{Higher-order case}

The assumptions in the higher-order case are essentially the same as in the lowest-order case, so that we only sketch the main arguments. There is, however, one additional assumption on the basis functions (see~\eqref{eq:mean_zero_div_l=2}) and one on the boundary weights (see~\eqref{eq:Kron_zeta0+}). 

\begin{assumption}[Higher-order basis functions] \label{ass:B+}
The basis functions $\{B_{+,r}^l\}_{r\in I_+^l}$ satisfy the following three properties.
\textup{(i)} Relation to differential operators: For all $l\in\{0{:}1\}$, there exist ``derivative coefficients'' $(\kappa_+^{r',r})_{(r',r)\in I_+^{l+1}\times I_+^l}$ so that 
\begin{subequations} \label{eq:ass_B+}
\begin{equation}
\tr^{l+1} \big( d^l B_{+,r}^l \big) = \sum_{r'\in I_+^{l+1}}\kappa_+^{r',r} \tr^{l+1}(B_{+,r'}^{l+1}).
\quad\forall r\in I_+^l. \label{eq:d_varphi_relation+}
\end{equation}
\textup{(ii)} Support and norm: For all $l\in\{0{:}2\}$ and all $r\in I_+^l$, there exists a boundary geometric object $\sigma_{+,r}^l \in \Delta_h^{\partial}$ and a real number $\beta_{+,r}^l$ so that
\begin{equation} \label{eq:support_B+}
\supp(B_{+,r}^l) \subset \clos(\st(\sigma_{+,r}^l)), \qquad 
\|B_{+,r}^l\|_{L^2(\Omega)} \lesssim h_{\sigma_{+,r}^l}^{\beta_{+,r}^l}.
\end{equation}
\textup{(iii) Mean-zero divergence when $l=2$:
\begin{equation}\label{eq:mean_zero_div_l=2}
    \tr^3(\dive(\bB_{+,r}^2)) = 0, \quad \forall r\in I_{+}^2.
\end{equation}
}
\end{subequations}
\end{assumption}

\begin{assumption}[Higher-order boundary weights] \label{ass:zeta+}
The boundary weights $(\zeta_{+,r}^l)_{r\in I_+^l}$ satisfy the following properties: 
\textup{(i)} Relation to basis functions: For all $l\in\{0{:}2\}$,
\begin{subequations} \label{eq:ass_zeta+}
\begin{alignat}{2} 
&(\zeta_{+,r}^l,\tr^l(B_{+,r'}^l))_\Gamma = \delta_{r,r'}, &\quad &\forall r,r'\in I_+^l, \label{eq:Kron_zeta+} \\
&(\zeta_{0,r}^l,\tr^l(B_{+,r'}^l))_\Gamma = 0, &\quad &\forall (r,r')\in I_0^l\times I_+^l. \label{eq:Kron_zeta0+}
\end{alignat}
\textup{(ii)} Relation to derivative coefficients and surface differential operators:
\begin{alignat}{2}
-\sdive (\bzeta_{+,r'}^1) &= \sum_{r \in I_+^0} \kappa_+^{r',r} \zeta_{+,r}^0, \quad &&\forall r'\in I_+^1, \label{deltazeta1+}\\
\srot (\zeta_{+,r'}^2) &= \sum_{r\in I_+^1} \kappa_+^{r',r} \bzeta_{+,r}^1, \quad &&\forall r'\in I_+^2.\label{deltazeta2+} 
\end{alignat} 
\textup{(iii)} Support and boundedness: For all $l\in\{0{:}2\}$ and all $r\in I_+^l$, recalling the extension $Y_{+,r}^l$ defined in~\eqref{eq:trace_Y_zeta}, we have
\begin{equation}\label{eq:Ybounds+}
\supp(Y_{+,r}^l) \subseteq \clos(\st(\sigma_{+,r}^l)), \quad 
h_{\sigma_{+,r}^l} \|d^{2-l} Y_{+,r}^l\|_{L^2(\Omega)} \lesssim \|Y_{+,r}^l\|_{L^2(\Omega)}  \lesssim h_{\sigma_{+,r}^l}^{1-\beta_{+,r}^l}.
\end{equation}
\end{subequations}
\end{assumption}

\begin{lemma}[Fulfillment of~\eqref{eq:Qb_properties}] \label{lem:cond_zeta+}
Assume~\eqref{eq:linear_dec}, \eqref{eq:ass_B+} and~\eqref{eq:ass_zeta+}. 
Then, the operators $\{P_+^l\}_{l\in\{0{:}2\}}$ defined in~\eqref{eq:def_P0+} satisfy the expected properties~\eqref{eq:Qb_properties} and \eqref{eq:Q_trace_l=2}.
\end{lemma}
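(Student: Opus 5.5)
The plan is to mirror the proof of Lemma~\ref{lem:cond_zeta0} item by item, using the higher-order assumptions \eqref{eq:ass_B+}--\eqref{eq:ass_zeta+}. Lemma~\ref{lem:pties_alpha} holds verbatim for $\alpha_{+,r}^l$, and we will use it throughout. We first record the analogue of Lemma~\ref{lem:alpha_der}:
\[
\alpha_{+,r'}^{l+1}(d^l u)=\sum_{r\in I_+^l}\kappa_+^{r',r}\alpha_{+,r}^l(u),\qquad l\in\{0{:}1\}.
\]
Its proof is identical to that of Lemma~\ref{lem:alpha_der}: start with $C^1$ functions, apply \eqref{eq:alpha_bnd_C1}, \eqref{eq:trace_d_commuting}, the adjoint relations \eqref{eq:scurl_adjoint}--\eqref{eq:sdive_adjoint} and \eqref{deltazeta1+}--\eqref{deltazeta2+}, and conclude by density.

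\textbf{Trace preservation \eqref{eq:Q_trace_preservation_0}.} Let $u\in V_{\tr,+}^l$. For the second statement, since $u\in V^l_{\tr,p}$, \eqref{eq:alpha_bnd_tr} gives $\alpha_{0,r}^l(u)=(\zeta_{0,r}^l,\tr^l(u))_\Gamma$. The trace $\tr^l(u)$ is a combination of the $\tr^l(B_{+,r'}^l)$, so \eqref{eq:Kron_zeta0+} makes all these coefficients vanish, and hence $P_0^l(u)=0$.

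For the first statement, $\tr^l(u-P_+^l(u))\in\tr^l(V^l_{+,p})$. By \eqref{eq:Kron_zeta+} and \eqref{eq:alpha_bnd_tr}, $(\zeta_{+,r}^l,\tr^l(u-P_+^l(u)))_\Gamma=\alpha_{+,r}^l(u)-\alpha_{+,r}^l(u)=0$ for all $r\in I_+^l$. The Gram matrix of $\{\zeta_{+,r}^l\}$ against the spanning family $\{\tr^l(B_{+,r'}^l)\}$ is the identity. Therefore an element of the span that is orthogonal to every $\zeta_{+,r}^l$ has all coefficients zero, which gives the claim. This step, i.e.\ identifying the span and using the biorthogonality, is the only point needing care.

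\textbf{Commuting \eqref{eq:Q_trace_commuting}.} Expand $\tr^{l+1}(d^lP_+^l(u))$ using \eqref{eq:d_varphi_relation+}. Expand $\tr^{l+1}(P_+^{l+1}(d^lu))$ using the derivative identity above. Swapping the sums shows the two expressions agree.

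\textbf{Support \eqref{eq:Q_support}.} This follows directly from \eqref{eq:support_B+}. Each $\sigma_{+,r}^l$ is a boundary object whose dimension is at least $l$ (it carries an $l$-form trace dof), so $\clos(\st(\sigma_{+,r}^l))$ is covered by $\Th^{l,\partial}$ cells. This is a mild point to check, since the assumption only states $\sigma_{+,r}^l\in\Delta_h^\partial$.

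\textbf{Bound \eqref{eq:Q_bound}.} Proceed as in the proof of \eqref{eq:P_bound}. Apply Cauchy--Schwarz to \eqref{eq:alpha_def}, localized via $\supp Y_{+,r}^l\subset\clos(\st(\sigma_{+,r}^l))$. Then combine the scalings $h^{\beta}$ and $h^{1-\beta}$ from \eqref{eq:support_B+} and \eqref{eq:Ybounds+}, and use shape regularity to obtain
\[
\|P_+^l(u)\|_{L^2(\tau)}\lesssim \|u\|_{L^2(\es^{1,l,\partial}(\tau))}+h_\tau\|d^lu\|_{L^2(\es^{1,l,\partial}(\tau))}.
\]

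\textbf{Property \eqref{eq:Q_trace_l=2}.} We have $\tr^3(\dive P_+^2(\bu))=\sum_r\alpha_{+,r}^2(\bu)\,\tr^3(\dive\bB_{+,r}^2)$, which vanishes by \eqref{eq:mean_zero_div_l=2}.
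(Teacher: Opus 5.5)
Your proposal is correct and follows the paper's route. The paper writes out only the genuinely new step, namely that $P_0^l$ annihilates $V_{\tr,+}^l$ via \eqref{eq:Kron_zeta0+} and \eqref{eq:alpha_bnd_tr}, together with \eqref{eq:Q_trace_l=2} via \eqref{eq:mean_zero_div_l=2}, and defers everything else to the lowest-order argument exactly as you do. Your remark that \eqref{eq:Q_support} and the localization in \eqref{eq:Q_bound} need $\dim(\sigma_{+,r}^l)\ge l$ is also correct: this does not follow from $\sigma_{+,r}^l\in\Delta_h^\partial$ alone, but it holds for the concrete choice $\sigma\in\mathfrak{D}^{l,\partial}$, and the paper leaves it implicit.
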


\begin{proof}
The only novelty in showing the properties~\eqref{eq:Qb_properties} with respect to the lowest-order case is to prove that $\tr^l(P_0^l(u))=0$ for all $u \in V_{\tr,+}^l$ and all $l\in\{0{:}2\}$. By definition, we have $\tr^l(P_0^l(u))=\sum_{r\in I_0^l} \alpha_{0,r}^l(u) \tr^l(B_{0,r}^l)$. Since $u \in V_{\tr,+}^l$, we infer from~\eqref{eq:tr_Vl_span} that there are real numbers $(\gamma_r(u))_{r\in I_+^l}$ so that $\tr^l(u) = \sum_{r\in I_+^l} \gamma_r(u) B_{+,r}^l$. Invoking~\eqref{eq:Kron_zeta0+} gives $(\zeta_{0,r}^l,\tr^l(u))_\Gamma = 0$ for all $r\in I_0^l$. But, since $u\in  V_{\tr,+}^l\subset  V_{\tr,p}^l$, \eqref{eq:alpha_bnd_tr} implies that $\alpha_{0,r}^l(u) = (\zeta_{0,r}^l,\tr^l(u))_\Gamma$ for all $r\in I_0^l$. Hence, $\tr^l(P_0^l(u))=0$. Moreover, the property \eqref{eq:Q_trace_l=2} follows immediately from \eqref{eq:mean_zero_div_l=2}.
\end{proof}

\begin{remark}[Bound~\eqref{eq:Q_bound}]
Since $\supp(Y_{+,r}^l) \subseteq \clos(\st(\sigma_{+,r}^l))$, the support of the high-order extensions is a bit more compact than the one of the lowest-order extensions (which involves $\es(\sigma_{0,r}^l)$, see~\eqref{eq:support_Y0}). Thus, the bound~\eqref{eq:Q_bound} can be made tighter by invoking norms of $u$ and $d^lu$ on subsets of $\es^{1,l,\partial}(\tau)$. For simplicity, we adopted a uniform presentation for both $P_0^l$ and $P_+^l$.
\end{remark}

\section{Construction of the lowest-order basis functions and boundary weights} 
\label{sec:lowest_order}

In this section, we detail the construction of the lowest-order basis functions and boundary weights. We remark that the basis functions are indeed of lowest order, but the corresponding boundary weights are of higher order even in the lowest-order case.

\subsection{Preliminaries}
\label{sec:prelim}

his section contains preliminary results to prepare for the construction of the lowest-order basis functions and boundary weights. We introduce the canonical degrees of freedom and the corresponding Whitney forms (in the bulk), piecewise polynomial spaces on the boundary mesh, and the Alfeld split of the boundary mesh.

\subsubsection{Canonical degrees of freedom and Whitney forms in the bulk}
\label{sec:Whitney}

The canonical degrees of freedom are linear forms on $V_p^l$, for all $l\in\{0{:}3\}$,
associated with the (oriented) $l$-simplices of the mesh $\Th$ and which are defined as follows:
\begin{subequations}\label{eq:Whitney_dofs}\begin{alignat}{2}
\phi_v(u)&:=u(v),&\quad&\forall u\in V_p^0,\; \forall v \in \Delta_h^0=\Vh,\\
\phi_e(\bu)&:=\int_e \bu\SCAL\bt_e,&\quad&\forall \bu\in \bV_p^1,\; \forall e \in \Delta_h^1=\Eh,\\
\phi_f(\bu)&:=\int_f \bu\SCAL\bn_f,&\quad&\forall \bu\in \bV_p^2,\; \forall f \in \Delta_h^2=\Fh,\\
\phi_\tau(u)&:=\int_\tau u,&\quad&\forall u\in V_p^3,\; \forall \tau \in \Delta_h^3=\Th.
\end{alignat}\end{subequations}
The above integrals are understood in algebraic form; for instance,
$\int_\tau 1 := |\tau| \Sign(\det(\bt_{e_1},\bt_{e_2},\bt_{e_3}))$ with $\bt_{e_k}$ pointing from $x_{i_0}$ to $x_{i_k}$ for all $k\in\{1{:}3\}$.
It is well-known that $\{\phi_v\}_{v\in\Vh}$, $\{\phi_e\}_{e\in\Eh}$, $\{\phi_f\}_{f\in\Fh}$, and $\{\phi_\tau\}_{\tau\in\Th}$ form a basis for the dual space of the lowest-order piecewise polynomial spaces $V_0^0$, $\bV_0^1$, $\bV_0^2$, and $V_0^3$, respectively. 

The dual bases in $\{V^l_0\}_{l\in\{0{:}3\}}$ of the canonical degrees of freedom are composed of the so-called Whitney forms whose scalar and vector-valued proxies we denote by $\{W_v\}_{v\in\Vh}$, $\{\bW_e\}_{e\in\Eh}$, $\{\bW_f\}_{f\in\Fh}$, $\{W_\tau\}_{\tau\in\Th}$. By construction, these are such that
\begin{equation*}
V_0^0 = \Span \{W_v\}_{v\in\Vh}, \quad \bV_0^1 = \Span \{\bW_e\}_{e\in\Eh}, \quad \bV_0^2 = \Span \{\bW_f\}_{f\in\Fh}, \quad V_0^3 = \Span \{W_\tau\}_{\tau\in\Th},
\end{equation*}
and
\begin{equation} \label{eq:dof_Whitney}
\phi_{v'}(W_v)=\delta_{vv'}, \quad
\phi_{e'}(\bW_e)=\delta_{ee'}, \quad
\phi_{f'}(\bW_f)=\delta_{ff'}, \quad
\phi_{\tau'}(W_\tau)=\delta_{\tau\tau'},
\end{equation}
where the $\delta$'s are Kronecker deltas. We introduce the generic notation $W_\sigma^l$ 
for all $l\in\{0{:}3\}$ and all $\sigma\in\Delta_h^l$. We have (see, e.g., \cite{BBF13, ErnGuermondbook})
\begin{equation}\label{CW}
\supp(W_\sigma^l) = \clos(\st(\sigma)), \qquad
\|W_\sigma^l\|_{L^2(\st(\sigma))} \le C_W h_{\sigma}^{\frac32-l}, 
\end{equation}
where $C_W$ only depends on the shape-regularity parameter $\rho_{\Th}$ of the mesh $\Th$.

Incidence matrices are the algebraic realization of the 
differential operators from the de Rham complex~\eqref{complex}, 
but acting on the lowest-order degrees of freedom. Their entries are incidence numbers in $\{-1,0,1\}$ associated with 
pairs of oriented geometric objects. To define these
numbers, we introduce the subsets
\begin{subequations} \begin{alignat}{2}
\Ev&:=\{e\in\Eh\,:\, v\in e\},
&&\quad \forall v\in\Vh, \\
\Fe&:=\{f\in\Fh\,:\, e\subset f\}, &&\quad \forall e\in\Eh, \\
\Tf&:=\{\tau\in\Th\,:\, f\subset \tau\}, &&\quad 
\forall f\in\Fh.
\end{alignat} \end{subequations}
If $e:=[x_{i_0},x_{i_1}] \in \Ev$, $v$ is obtained from $e$ by omitting one of the two vertices
of $e$, say $x_{i_j}$ with $j\in\{0,1\}$, and we set $\iota_{ev}:=(-1)^j$. 
If $f:=[x_{i_0},x_{i_1},x_{i_2}] \in \Fe$, $e$ is obtained from $f$ by omitting one of the three vertices of $f$, say $x_{i_j}$ with $j\in\{0,1,2\}$, and we set $\iota_{fe}:=(-1)^j$. 
If $\tau=[x_{i_0},x_{i_1},x_{i_2},x_{i_3}] \in \Tf$, $f$ is obtained from $\tau$ by omitting one of the four vertices of $\tau$, say $x_{i_j}$ with $j\in\{0,1,2,3\}$, and we set $\iota_{\tau f}:=(-1)^j$.
Finally, we also set $\iota_{ev}:=0$ for all $v\in \Vh$ and all $e\not\in\Ev$,
$\iota_{fe}:=0$ for all $e\in \Eh$ and all $f\not\in\Fe$, and
$\iota_{\tau f}:=0$ for all $f\in \Fh$ and all $\tau\not\in\Tf$.

The key property of the incidence numbers is the following algebraic realization of the complex property of \eqref{complex}:
\begin{subequations} \label{eq:compatible_iota} \begin{alignat}{2}
\sum_{v\in\Ve} \iota_{ev} &=0, \quad &&\forall e\in\Eh, \label{eq:GRAD} \\
\sum_{e\in\Ev\cap \Ef} \iota_{fe}\iota_{ev} &=0, \quad && \forall f\in \Fh, \; \forall v\in \Vh, \label{eq:CURL_GRAD} \\
\sum_{f\in\Fe \cap \Ft} \iota_{\tau f}\iota_{fe} &=0, \quad && \forall \tau\in \Th, \; \forall e\in \Eh, \label{eq:DIV_CURL}
\end{alignat} \end{subequations}
where $\Ve:=\{v\in\Vh\,:\, v\in e\}$ for all $e\in \Eh$,
$\Vf:=\{v\in\Vh\,:\, v\in f\}$ and $\Ef:=\{e\in\Eh\,:\, e\subset f\}$ for all $f\in \Fh$,
and $\Vt:=\{v\in\Vh\,:\, v\in \tau\}$, $\Et :=\{e\in\Eh\,:\, e\subset \tau\}$, and $\Ft :=\{f\in\Fh\,:\, f\subset \tau\}$ for all $\tau\in \Th$.

The following identities are a straightforward consequence of the Stokes theorem~\cite{whitney2012geometric}:
\begin{equation} \label{eq:W_iota}
\grad \, (W_v) = \sum_{e\in \Ev} \iota_{ev} \bW_e,
\quad
\curl \, (\bW_e) = \sum_{f\in \Fe} \iota_{fe} \bW_f,
\quad
\dive (\bW_f) = \sum_{\tau\in \Tf} \iota_{\tau f} W_\tau.
\end{equation}

\subsubsection{Discrete trace spaces}\label{sec:boundary_discrete_spaces}



For all $l\in \{0{:}2\}$ and all $\sigma\in \Delta_h^{l,\sigma}$, we set $W^{l,\partial}_\sigma:=\tr^l(W^l_\sigma)$, so that $\tr^l(V_0^l) = \Span \{W_\sigma^{l,\partial}\}_{\sigma\in\Delta_h^{l,\partial}}$. Moreover, we have
\begin{equation}\label{eq:CWB}
\supp(W_\sigma^{l,\partial}) = \clos(\stb(\sigma)), \qquad
\|W_\sigma^{l, \partial}\|_{L^2(\stb(\sigma))} \le C_{W^\partial} h_{\sigma}^{1-l}, 
\end{equation}
where $C_{W^\partial}$ only depends on the shape-regularity parameter $\rho_{\Th}$ of the mesh $\Th$.
Owing to \eqref{eq:W_iota}, \eqref{eq:trace_d_commuting}, and the fact that $\tr^l(W_\sigma^{l,\partial}) = 0$ for all $\sigma\in\mathring{\Delta}_h^l$ and all $l\in\{0{:}2\}$, we have
\begin{subequations}\label{WB_iota}
\begin{alignat}{2}
    \sgrad (W_v^\partial) &= \sum_{e\in \Ev \cap \Eh^\partial} \iota_{ev} \bW_e^\partial \quad &&\forall v\in\Vh^\partial,\\
    \scurl (\bW_e^\partial) &= \sum_{f\in \Fe \cap \Fh^\partial} \iota_{fe} W_f^\partial \quad &&\forall e\in\Eh^\partial.
\end{alignat}
\end{subequations}

We also introduce the following functionals on the boundary spaces $\{\tr^l(V_p^l)\}_{l\in\{0{:}2\}}$:
\begin{subequations}\begin{alignat}{2}
\phi_v^\partial(u)&:=u(v),&\quad&\forall u\in \tr^0(V_p^0),\; \forall v \in \Delta_h^{0,\partial}=\Vh^\partial,\\
\phi_e^\partial(\bu)&:=\int_e \bu\SCAL\bt_e,&\quad&\forall \bu\in \tr^1(\bV_p^1), \; \forall e \in \Delta_h^{1,\partial}=\Eh^\partial,\\
\phi_f^\partial(u)&:=\int_f u,&\quad&\forall u\in \tr^2(V_p^2),\; \forall f \in \Delta_h^{2,\partial}=\Fh^\partial.
\end{alignat}\end{subequations}
We notice that $\phi^\partial_\sigma \circ \tr^l = \phi_\sigma$ for all $\sigma \in \Delta_h^{l,\partial}$ and all $l\in\{0{:}2\}$, where the $\phi_\sigma$'s are the canonical dofs defined in~\eqref{eq:Whitney_dofs}. Moreover, we readily see that
\begin{subequations} \begin{alignat}{3}
&\sum_{v\in\Ve} \iota_{ve} \phi^\partial_v(u) = \phi_e^\partial(\sgrad u),\quad &&\forall u\in \tr^0(V^0_p),\;&&\forall e\in \Eh^\partial, \label{eq:iota_ev_bnd}\\
&\sum_{e\in\Ef} \iota_{ef} \phi^\partial_e(\bu) = \phi_f^\partial(\scurl \bu),\quad &&\forall \bu\in \tr^1(\bV^1_p),\;&&\forall f\in \Fh^\partial.\label{eq:iota_fe_bnd}
\end{alignat} \end{subequations}

\subsubsection{Boundary mesh: Alfeld split and useful properties}

As in \cite{EGPV_HO:24}, we use solutions to local problems on mesh splits to construct the local weight functions. However, the mesh split we employ in this work is an Alfeld split on the boundary faces. More precisely, this boundary Alfeld split is obtained by adding the barycenter to every boundary face $f\in\Fh^\partial$ and connecting it to the three vertices of $f$. This produces three sub-triangles of $f$. We denote the collection of all boundary faces obtained from this split as $\FhBA$. 

In general, the restriction of the Alfeld split of the bulk mesh is not the boundary Alfeld split. Hence, we will need another bulk mesh split in order to extend discrete functions defined on the boundary Alfeld split to the interior. One such mesh split is the Worsey--Farin split \cite{WorseyFarin87}. In particular, the Worsey--Farin split of $\Th$ is obtained by first adding an interior vertex to every mesh cell $\tau\in\Th$ and connecting it to the four vertices of $\tau$, and then for each face $f\in\Fh$, one connects the barycenter of $f$ to the three vertices of $f$ and to the added interior vertices of the adjacent tetrahedra. We will in particular use the barycenter as the added interior vertex for every $\tau\in\Th$. The resulting mesh is denoted $\ThWF$, and the discrete spaces $V_p^l$ over this mesh are denoted generically as $V_p^l(\ThWF)$ for all $l\in\{0{:}3\}$.

For all $l\in\{0{:}2\}$ and all $\sigma\in\Delta_h^{l,\partial}$, recall the definition~\eqref{eq:def_esb}  of the extended boundary star of $\sigma$, $\esb(\sigma)$. We let $\calFesb$ be the collection of boundary faces in $\Fh^\partial$ composing $\esb(\sigma)$. We define the local boundary spaces $V_p^l(\calFesb)$ as 
\begin{equation}\label{eq:local_spaces}
    V_p^l(\calFesb) := \{v|_{\esb(\sig)} : v=\tr^l(u),\; u\in V_p^l\}.
\end{equation}
Similarly, letting $\calFesbA:=\{ F\in \FhBA\::\: F\subset \clos(\esb(\sigma))\}$, we define the spaces $\{V_p^l(\calFesbA)\}_{l\in\{0{:}2\}}$ by replacing $\calFesb$ with $\calFesbA$ and $V_p^l$ by $V_p^l(\ThWF)$ in \eqref{eq:local_spaces}. Then, we define
\begin{subequations}\label{eq:local_boundary_spaces_m}
\begin{align}
\mV_p^0(\calFesbA) &:= \left\{u\in V_p^0(\calFesbA) :  u|_{\partial \esb(\sigma)} = 0\right\},\\
\mbV_{p}^1(\calFesbA) &:= \left\{\bu\in \bV_{p}^1(\calFesbA) : \bu|_e \cdot \bt_e = 0,\, \forall e\in \Eh^\partial, e\subset \partial\esb(\sigma)\right\},\label{eq:def_mV_pp^1}\\
\mV_p^2(\calFesbA) &:= \left\{u\in V_p^2(\calFesbA) : (u, 1)_{\esb(\sigma)} = 0\right\}.
\end{align}
\end{subequations}
We also set 
\begin{equation}
    \mbV_{p,\perp}^1(\calFesbA):= \bn \times \mbV_{p}^1(\calFesbA).
\end{equation}
While these spaces are defined on local boundary patches, we also identify them with spaces defined over all of $\Gamma$ by their natural zero-extensions to the rest of $\Gamma$.
Owing to Assumption~\ref{ass:cont_esb}, the following discrete sequences are exact:
\begin{subequations} 
\begin{equation}\label{localcomplex}
    \mathbb{R}
    \stackrel{\subset}{\xrightarrow{\hspace*{0.5cm}}}\
     V_p^0(\calFesb)
    \stackrel{\sgrad}{\xrightarrow{\hspace*{0.5cm}}}\
      \bV_p^1(\calFesb)
    \stackrel{\scurl}{\xrightarrow{\hspace*{0.5cm}}}\
      \bV_p^2(\calFesb)
    \stackrel{0}{\xrightarrow{\hspace*{0.5cm}}}\
    0,
\end{equation}
\begin{equation}\label{localcomplexm}
    0 
    \stackrel{\subset}{\xrightarrow{\hspace*{0.5cm}}}\
     \mVp{0}(\calFesbA)
    \stackrel{\srot}{\xrightarrow{\hspace*{0.5cm}}}\
      \mbVpp{1} (\calFesbA)
    \stackrel{\sdive}{\xrightarrow{\hspace*{0.5cm}}}\
      \mbVp{2}(\calFesbA)
    \stackrel{\int_{\esb(\sigma)}}{\xrightarrow{\hspace*{0.5cm}}}\
    0.
\end{equation}
\end{subequations}
We define the following kernels in the local spaces:
\begin{subequations} \label{eq:def_ZZ} \begin{align}
\Zz   V_p^0(\calFesb)&:=  \{ u \in  V_p^0(\calFesb): \sgrad \, u=\bzero \},  \\
\Zz   \bV_p^1(\calFesb)&:= \{ \bu \in  \bV_p^1(\calFesb): \scurl \, \bu=0 \}, 
\end{align} \end{subequations}
and the orthogonal complements as
\begin{subequations} \label{eq:def_ZZ_perp} \begin{align}
\Zz^{\perp}   V_p^0(\calFesb)&:= \{ u \in  V_p^0(\calFesb): (u,v)_{\esb(\sigma)} =0, \forall v \in  \Zz   V_p^0(\calFesb)\}, \\ 
\Zz^{\perp}   \bV_p^1(\calFesb)&:= \{ \bu \in  \bV_p^1(\calFesb): (\bu, \bv)_{\esb(\sigma)} = 0, \forall \bv \in  \Zz   \bV_p^1(\calFesb) \}.
\end{align} \end{subequations}
We define the spaces  $\Zz \mVp{0}(\calFesbA)$, $\Zz \mbV_{p,\perp}^1(\calFesbA)$, $\Zz^{\perp} \mVp{0}(\calFesbA)$, and $\Zz^\perp \mbV_{p,\perp}^1(\calFesbA)$ as in~\eqref{eq:def_ZZ}--\eqref{eq:def_ZZ_perp}. 
We will invoke the following discrete Poincar\'e inequalities on boundary extended stars. Their proofs are similar to the discrete Poincar\'e inequalities established on extended stars in the bulk in \cite{EGPV_HO:24,PGEV_Poinc:26} and are omitted for brevity. 

\begin{proposition}[Discrete Poincar\'e inequalities on boundary extended stars] \label{prop:discP_star}
For all $\sigma \in \Delta_h^{\partial}$, we have the discrete Poincar\'e inequalities:
\begin{subequations}
\begin{alignat}{2}
\|u\|_{L^2(\esb(\sigma))} &\lesssim h_{\sigma} \| \sgrad u \|_{\bL^2(\esb(\sigma))},  \quad && \forall u \in \Zz^{\perp} V_p^0(\calFesb),   \label{onto0}\\
\|\bu\|_{\bL^2(\esb(\sigma))} &\lesssim  h_{\sigma} \|\scurl \bu\|_{L^2(\esb(\sigma))}, \quad && \forall  \bu \in \Zz^{\perp} \bV_p^1(\calFesb),\label{onto1}
\end{alignat}
\end{subequations}
and
\begin{subequations}
\begin{alignat}{2}
 \|u\|_{L^2(\esb(\sigma))} &\lesssim h_{\sigma} \| \srot u \|_{\bL^2(\esb(\sigma))},  \quad && \forall u \in \Zz^{\perp} \mVp{0}(\calFesbA),   \label{onto0m}\\
 \|\bu\|_{\bL^2(\esb(\sigma))} &\lesssim h_{\sigma} \|\sdive \bu\|_{L^2(\esb(\sigma))}, \quad && \forall  \bu \in \Zz^{\perp} \mbV_{p,\perp}^1(\calFesbA). \label{onto1m}
\end{alignat}
\end{subequations}
\end{proposition}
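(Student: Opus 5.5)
We plan to prove these discrete Poincaré inequalities by the standard scaling-plus-compactness (finite-dimensionality) argument used for the bulk extended stars in \cite{EGPV_HO:24,PGEV_Poinc:26}. The argument has three steps: exactness, a reference-configuration bound, and scaling.

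\emph{Step 1 (exactness gives a positive constant on each patch).} Fix $\sigma\in\Delta_h^\partial$. The patch $\esb(\sigma)$ is a connected union of boundary faces. By Assumption~\ref{ass:cont_esb} its closure is contractible, so the sequences \eqref{localcomplex} and \eqref{localcomplexm} are exact.

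Consider \eqref{onto0}. The operator $\sgrad$ restricted to $\Zz^{\perp}V_p^0(\calFesb)$ is injective, by the very definition of the orthogonal complement of its kernel. On a finite-dimensional space, an injective linear map is bounded below. Hence there is $C(\esb(\sigma))>0$ with $\|u\|\le C\|\sgrad u\|$.

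The same reasoning applies to the other three cases:
\begin{itemize}
\item for \eqref{onto1}, use $\scurl$ on $\Zz^{\perp}\bV_p^1(\calFesb)$;
\item for \eqref{onto0m}, use $\srot$ on $\Zz^{\perp}\mVp{0}(\calFesbA)$, which is injective since $\srot u=\bn\times\sgrad u$ and, for $u$ vanishing on $\partial\esb(\sigma)$, a zero surface gradient forces $u=0$;
\item for \eqref{onto1m}, use $\sdive$ on $\Zz^\perp\mbV_{p,\perp}^1(\calFesbA)$.
\end{itemize}
At this step exactness is not even needed for the bound itself, only injectivity. The real content is the uniformity of $C$.

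\emph{Step 2 (uniformity via reference configurations).} We need $C$ to depend only on $\rho_{\Th}$ and $p$. By shape regularity, the number of boundary faces in $\esb(\sigma)$ is uniformly bounded. The same holds for the number of their Alfeld sub-triangles, and the angles between adjacent faces range over a compact set.

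We map the patch to a reference-size patch by the dilation $x\mapsto (x-x_\sigma)/h_\sigma$. The surface operators scale as $h_\sigma^{-1}$, and $L^2$ norms on both sides scale by the same factor $h_\sigma$. The inequalities thus reduce to showing that $C$ is bounded for all admissible unit-size patches.

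The admissible patches are described by their combinatorial type, of which there are finitely many, together with the vertex positions. For each fixed combinatorial type, the vertex positions lie in a compact set determined by $\rho_{\Th}$, a minimal angle bound, and the unit diameter.

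\emph{Step 3 (continuity of the constant; main obstacle).} The constant is the reciprocal of the smallest nonzero singular value of the discrete operator, for instance $\sgrad$ on $V_p^0(\calFesb)$. We pull back each face piecewise-affinely to a fixed reference patch of the same combinatorial type; for the vector spaces we use the covariant and contravariant Piola maps. The discrete spaces, norms, and operators then become fixed finite-dimensional spaces with matrices depending continuously on the vertex coordinates.

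The delicate point is that the patch is a non-planar polyhedral surface, possibly folded along edges, so the pullback is only piecewise defined. We must check that tangential continuity of the discrete spaces across edges is preserved by the Piola maps. This holds because the boundary spaces are traces of bulk conforming spaces, so tangential traces and normal traces match edge-wise.

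Next, the dimension of the kernel is constant in the vertex coordinates. By exactness of \eqref{localcomplex}--\eqref{localcomplexm} it equals the corresponding cohomology of a contractible complex, or the dimension of the range of the previous operator, which is fixed by the combinatorial type. With constant rank, the smallest nonzero singular value is a continuous, strictly positive function on a compact set, hence bounded below. Taking the maximum over the finitely many combinatorial types gives the uniform constant and concludes the proof.

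The step we expect to be hardest is this constant-rank continuity argument on the folded surface. For the Alfeld-split spaces in \eqref{onto0m}--\eqref{onto1m}, we would first try to handle it by treating $\srot$ and $\sdive$ as $\bn\times$ rotations of $\sgrad$ and $\scurl$, applied face by face.
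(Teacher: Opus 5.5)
Your argument is correct. The paper itself does not prove this proposition. It only says that the bulk arguments for extended stars in the cited references carry over, and omits the details. So there is no detailed argument to compare against, and your scaling-plus-compactness route is a legitimate way to supply the omitted proof.

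What your route buys is simplicity. It avoids constructing discrete potentials or commuting projections on a folded surface, and it works for any family of conforming spaces that is invariant under face-wise Piola pullbacks. What it does not give is an explicit value of the constant or any information on its $p$-dependence. Arguments based on continuous Poincar\'e inequalities combined with bounded commuting projections make that dependence more quantitative. For the statement as given, where constants may depend on $\rho_{\Th}$ and $p$, compactness suffices.

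A few steps can be tightened.

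(i) Dihedral angles between adjacent faces play no role, and they are not controlled by $\rho_{\Th}$ alone: bulk shape regularity does not keep re-entrant angles away from $2\pi$. After the face-wise affine pullback, the Gram matrices depend only on the face metrics $J_f^TJ_f$. You therefore only need compactness of the face shapes, which shape regularity provides, and the limiting configurations need not be embedded.

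(ii) The reason the pulled-back spaces are configuration-independent is different from the one you give. The affine maps of two faces agree on their common edge, so the covariant (resp.\ contravariant) Piola maps preserve the edge-tangential component (resp.\ the edge-normal flux), and the face-wise polynomial spaces are Piola-invariant. The remark about traces of bulk conforming spaces only explains why the physical spaces are the full surface conforming spaces.

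(iii) With these pullbacks, $\sgrad$ and $\scurl$ commute with the pullbacks, with the usual determinant scaling for the top-degree scalar space. Hence the derivative matrix in the reference bases does not depend on the vertex positions. Constant rank is then automatic, and you do not need exactness for this step.

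(iv) The Alfeld-split inequalities reduce to unrotated ones. Pointwise $|\srot u| = |\sgrad u|$, and $\sdive(\bn\times\bv) = -\scurl\bv$. Moreover, $\bn\times$ is an $L^2$-isometry that maps orthogonal complements of kernels to orthogonal complements of kernels. Also, $\Zz^\perp\mVp{0}(\calFesbA)$ is the whole space, since $\srot$ is injective there.
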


\subsection{Lowest-order basis functions}

The lowest-order basis functions are chosen to be the Whitney forms attached to all the geometric entities lying on the boundary. Thus, we set
\begin{equation}
B_{0,r}^l := W_r^l, \qquad \forall r\in I_0^l:=\Delta_h^{l,\partial}, \qquad \forall l\in\{0{:}2\},
\end{equation}
so that the enumeration index $r$ corresponds to a boundary vertex for $l=0$, to a boundary edge for $l=1$ and to a boundary face for $l=2$. Notice that we indeed have $\tr^l(V_0^l) = \Span_{r\in I_0^l} (\tr^l(B_{0,r}^l))$ in agreement with~\eqref{eq:tr_Vl_span}. Moreover, recalling~\eqref{eq:W_iota} and~\eqref{CW}, the assumptions~\eqref{eq:ass_B0} hold true with
\begin{equation}
\kappa_0^{r,r'} := \iota_{r',r}, \qquad \sigma_{0,r}^l:=r, \qquad
\beta_{0,r}^l:=\frac32-l,
\end{equation}
where the quantities $\iota_{r',r}$ are the incidence numbers defined in 
Section~\ref{sec:Whitney}.

\subsection{Construction of the lowest-order boundary weights}

The construction of the lowest-order boundary weights essentially follows the ideas of the construction in the bulk from \cite{EGPV_HO:24}. For completeness, we provide some details. Let $\mu$ be the globally continuous function defined on $\Gamma$, that is piecewise affine on the boundary mesh $\FhBA$, vanishes on $\partial f$ for every $f \in \Fh^\partial$, and takes the value one at its barycenter. For all $\sigma \in \Delta_h^\partial$, we let 
\begin{equation} \label{eq:def_mu}
\mu_{\sigma} := \chi_{\esb(\sigma)} \mu,
\end{equation}
where $\chi_{\esb(\sigma)}$ is the characteristic function of $\esb(\sigma)$. 

The lowest-order boundary weights are constructed sequentially, starting from $l=0$, then $l=1$, and finally $l=2$. The construction needs that we prove along the way one of the required properties of these weights. We leave the verification of the remaining properties to the end of this section.

\subsubsection{Construction of $\zeta_{0,v}^0$ and $\bY_{0,v}^0$ for all $v \in \Vh^\partial$}

Let $\eta_{v}^0:=|\esb(v)|^{-1}\chi_{\esb(v)}$ and define $\psi_{v}^0 \in \Zz^{\perp} V_p^0(\calFevb)$ as the solution to
\begin{equation}\label{605}
( \mu_{v} \, \sgrad \psi_{v}^0, \sgrad u)_{\esb(v)} = \phi_v^\partial(u) - (\eta^0_v, u )_{\esb(v)}, \quad  \forall u \in   \Zz^{\perp} V_p^0(\calFevb).
\end{equation}
This problem is well-posed owing to the Poincar\'e inequality~\eqref{onto0}. 
We now define
\begin{equation}\label{defZ0}
\zeta_{0,v}^0:=\eta_v^0- \sdive \big(\mu_v \, \sgrad \, \psi_v^0 \big) \quad \text{in $\esb(v)$},
\end{equation}
and extend $\zeta_{0,v}^0$ by zero outside $\esb(v)$. 
Notice that $\zeta_{0,v}^0 \in V_p^2(\calFevbA)$.
Finally, we define $\bY_{0,v}^0$ as the element of $\bV_p^2(\ThWF)$ obtained by setting its boundary degrees of freedom so that $\tr^2(\bY_{0,v}^0)|_{\esb(v)} = \zeta_{0,v}^0$ and all its bulk (Worsey--Farin) degrees of freedom to zero. Hence, $\supp(\bY_{0,v}^0) \subset \clos(\es(v))$.

\begin{lemma}[Link to lowest-order boundary dof]
\textup{(i)} We have
\begin{equation}\label{805}
(\mu_{v} \, \sgrad \psi_v^0, \sgrad u)_{\esb(v)}=\phi_v^\partial(u) - (\eta_v^0, u )_{\esb(v)}, \quad\forall u \in V_p^0(\calFevb).
\end{equation}
\textup{(ii)}
The following holds:
\begin{equation} \label{eq:zeta_dof_0}
(\zeta_{0,v}^0 , \tr^0(u))_\Gamma = \phi_v(u), \quad  \forall u\in V^0_p.
\end{equation}
\end{lemma}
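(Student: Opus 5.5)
For part (i), I will decompose an arbitrary $u\in V_p^0(\calFevb)$ as $u=u_0+u_\perp$ with $u_0\in\Zz V_p^0(\calFevb)$ and $u_\perp\in\Zz^{\perp}V_p^0(\calFevb)$, using the $L^2(\esb(v))$-orthogonal decomposition. By the exactness of \eqref{localcomplex}, which relies on Assumption~\ref{ass:cont_esb} (the closure of $\esb(v)$ is contractible and hence connected), the kernel $\Zz V_p^0(\calFevb)$ consists of the constant functions on $\esb(v)$. The left-hand side of \eqref{805} vanishes for $u_0$ because $\sgrad u_0=\bzero$. The right-hand side also vanishes for $u_0=c$, since $\phi_v^\partial(c)=c$ and $(\eta_v^0,c)_{\esb(v)}=c\,|\esb(v)|^{-1}|\esb(v)|=c$. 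Linearity together with \eqref{605} applied to $u_\perp$ then gives \eqref{805}.

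For part (ii), I take $u\in V_p^0$ and set $w:=\tr^0(u)|_{\esb(v)}\in V_p^0(\calFevb)$. Since $\zeta_{0,v}^0$ is supported in $\clos(\esb(v))$, we have $(\zeta_{0,v}^0,\tr^0(u))_\Gamma=(\eta_v^0,w)_{\esb(v)}-(\sdive(\mu_v\sgrad\psi_v^0),w)_{\esb(v)}$. The next step is to integrate the divergence term by parts using \eqref{eq:sdive_adjoint}, which yields $(\mu_v\sgrad\psi_v^0,\sgrad w)_{\esb(v)}$. By part (i), this equals $\phi_v^\partial(w)-(\eta_v^0,w)_{\esb(v)}$. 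The two $\eta$ terms cancel, leaving $\phi_v^\partial(w)=u(v)=\phi_v(u)$ because $\phi_v^\partial\circ\tr^0=\phi_v$.

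The main obstacle is justifying this integration by parts, and in particular showing that no boundary term appears on $\partial\esb(v)$. The key observation is that $\mu$ vanishes on the edges of every boundary face, and therefore on $\partial\esb(v)$. As a consequence, the zero extension of $\mu_v\sgrad\psi_v^0$ to $\Gamma$ has continuous normal component across all edges. This holds across the edges of the original boundary faces because $\mu=0$ there, and across the interior Alfeld edges because $\mu$ and $\sgrad\psi_v^0$ are continuous inside each face $f$. Hence $\mu_v\sgrad\psi_v^0$ lies in $\bH(\sdive,\Gamma)$ with support in $\clos(\esb(v))$. Applying \eqref{eq:sdive_adjoint} on all of $\Gamma$ with $\tr^0(u)\in H^1(\Gamma)$ then gives the identity with no boundary contribution. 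Along the way I would also confirm that $\sdive(\mu_v\sgrad\psi_v^0)$ is an $L^2$ function, piecewise polynomial on $\calFevbA$, so that $\zeta_{0,v}^0$ is well defined as claimed.
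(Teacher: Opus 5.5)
Your proposal is correct and follows the paper's proof. For (i) you check that both sides of \eqref{805} vanish on the kernel of constants, and for (ii) you integrate by parts and invoke (i). Your additional checks fill in details the paper leaves implicit: you identify the kernel explicitly as the constants, and you show there is no boundary term. The latter holds because $\mu$ vanishes on the edges of the boundary faces, so the zero extension of $\mu_v\sgrad\psi_v^0$ lies in $\bH(\sdive,\Gamma)$.
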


\begin{proof}
\textup{(i)} We need to show that~\eqref{605} also holds true for all $u\in \Zz V_p^0(\calFevb)$. This follows from the fact that both the left-hand side and the right-hand side vanish when $u\in \Zz V_p^0(\calFevb)$.

\textup{(ii)} Let $u\in V^0_p$. The identity follows from
\begin{align*}
(\zeta_{0,v}^0 , \tr^0(u))_\Gamma    
    &= (\eta_v^0 , \tr^0(u))_{\esb(v)}  - (\sdive(\mu_v \sgrad \psi_v^0) , \tr^0(u))_{\esb(v)} \\
    &= (\eta_v^0 , \tr^0(u))_{\esb(v)}  + (\mu_v \sgrad (\psi_v^0) , \sgrad(\tr^0(u)))_{\esb(v)}\\
    &= \phi_v^\partial(\tr^0(u)) = \phi_v(u),
\end{align*}
where we used~\eqref{805} since $\tr^0(u)|_{\esb(v)} \in V_p^0(\calFevb)$.
\end{proof}

\subsubsection{Construction of $\bzeta_{0,e}^1$ and $\bY_{0,e}^1$ for all $e\in\Eh^\partial$}

Since $\esb(v)\subset\esb(e)$ for all $v\in\Ve$ and $\zeta_{p,v}^0$ is supported on $\clos(\esb(v))$, we obtain
\begin{equation}
\int_{\esb(e)} \sum_{v\in\Ve} \iota_{ev} \zeta_{0,v}^0
= \sum_{v\in\Ve} \iota_{ev} \int_{\esb(v)} \zeta_{0,v}^0 = \sum_{v\in\Ve} \iota_{ev} = 0,
\end{equation}
where the second equality follows from~\eqref{eq:zeta_dof_0} (take $u$ constant equal to one on $\esb(v)$ to infer that $\int_{\esb(v)} \zeta_{0,v}^0=1$), and the last equality follows from~\eqref{eq:GRAD}.
Hence, by the exactness of the discrete sequence~\eqref{localcomplexm}, there exists $\beeta_e^1 \in \Zz^{\perp} \mbVpp{1}(\calFeebA)$ such that 
\begin{equation}\label{eta1def}
 -\sdive \, \beeta_e^1= \sum_{v\in\Ve} \iota_{ev} \zeta_{0,v}^0 \quad \text{ on $\esb(e)$}.
\end{equation} 
We extend $\beeta_e^1$ by zero outside $\esb(e)$.

Next, we define $\bpsi_e^1 \in \Zz^{\perp} \bV_p^1(\calFeeb)$ such that
\begin{equation}\label{605.1}
( \mu_{e} \, \scurl \bpsi_e^1, \scurl \bu)_{\esb(e)} =\phi_e^\partial(\bu) - ( \beeta_e^1, \bu )_{\esb(e)}, \quad  \forall \bu \in   \Zz^{\perp} \bV_p^1(\calFeeb).
\end{equation}
This problem is well-posed owing to the Poincar\'e inequality~\eqref{onto1}.
Next, we define 
\begin{equation}\label{defzeta1}
\bzeta_{0,e}^1:=\beeta_e^1 + \srot \big(\mu_e \, \scurl \bpsi_e^1 \big) \quad \text{in $\esb(e)$},
\end{equation}
and extend $\bzeta_{0,e}^1$ by zero outside $\esb(e)$. Notice that $\bzeta_{0,e}^1 \in \mbVpp{1}(\calFeebA)$.  
Finally, we define $\bY_{0,e}^1$ as the element of $\bV_p^1(\ThWF)$ obtained by setting its boundary degrees of freedom so that $\tr_\perp^1(\bY_{0,e}^1)|_{\esb(e)} = \bzeta_{0,e}^1$ and all its bulk (Worsey--Farin) degrees of freedom to zero. Hence, $\supp(\bY_{0,e}^1) \subset \clos(\es(e))$.

\begin{lemma}[Link to lowest-order boundary dof]
\textup{(i)} We have
\begin{equation}\label{805.1}
(\mu_{e} \, \scurl \bpsi_e^1, \scurl \bu)_{\esb(e)}=\phi_e^\partial(\bu) - (\beeta_e^1, \bu )_{\esb(e)}, \quad  \forall \bu \in \bV_p^1(\calFeeb).
\end{equation}
\textup{(ii)} The following holds:
\begin{equation} \label{eq:zeta_dof_1}
(\bzeta_{0,e}^1 , \tr^1(\bu))_\Gamma = \phi_e(\bu), \quad  \forall \bu\in \bV^1_p.
\end{equation}
\end{lemma}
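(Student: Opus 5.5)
The plan mirrors the proof of the analogous vertex lemma, with one extra ingredient: we must check that the kernel contribution is compatible with $\beeta_e^1$.

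\textup{(i)} Since $\bV_p^1(\calFeeb) = \Zz \bV_p^1(\calFeeb) \oplus \Zz^{\perp}\bV_p^1(\calFeeb)$ and~\eqref{605.1} already holds on the second summand, it suffices to prove~\eqref{805.1} for $\bu\in \Zz\bV_p^1(\calFeeb)$. For such $\bu$ the left-hand side vanishes because $\scurl\bu=0$. For the right-hand side, we use the exactness of~\eqref{localcomplex} on the contractible patch $\clos(\esb(e))$ (Assumption~\ref{ass:cont_esb}) to write $\bu=\sgrad w$ with $w\in V_p^0(\calFeeb)$. Then~\eqref{eq:iota_ev_bnd} gives
\[
\phi_e^\partial(\bu)=\sum_{v\in\Ve}\iota_{ev}\,w(v).
\]
On the other hand, $\beeta_e^1$ is tangential, lies in $\mbVpp{1}(\calFeebA)$ (vanishing normal flux on $\partial\esb(e)$ after rotation), and its zero extension is in $\bH(\sdive,\Gamma)$. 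Hence integration by parts~\eqref{eq:sdive_adjoint} and~\eqref{eta1def} yield
\[
(\beeta_e^1,\sgrad w)_{\esb(e)}=-(\sdive\beeta_e^1,w)_{\esb(e)}=\sum_{v\in\Ve}\iota_{ev}(\zeta_{0,v}^0,w)_{\esb(v)}.
\]
Next, for each $v\in\Ve$, we have $w|_{\esb(v)}\in V_p^0(\calFevb)$ since $\esb(v)\subset\esb(e)$. By~\eqref{805} and the computation in the proof of~\eqref{eq:zeta_dof_0}, this gives $(\zeta_{0,v}^0,w)_{\esb(v)}=w(v)$. The right-hand side of~\eqref{805.1} therefore vanishes. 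The main point to get right is that $w$ extends to an element of $\tr^0(V^0_p)$ near $\esb(v)$, or more directly that identity~(ii) of the previous lemma only uses $w|_{\esb(v)}$. I expect this to hold because $\zeta_{0,v}^0$ is supported in $\clos(\esb(v))$ and the identity there was derived purely locally from~\eqref{805}.

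\textup{(ii)} Let $\bu\in\bV^1_p$ and set $\bw:=\tr^1(\bu)$, so that $\bw|_{\esb(e)}\in\bV_p^1(\calFeeb)$. Using~\eqref{defzeta1} and the support of $\bzeta_{0,e}^1$, we write
\[
(\bzeta_{0,e}^1,\bw)_\Gamma=(\beeta_e^1,\bw)_{\esb(e)}+(\srot(\mu_e\scurl\bpsi_e^1),\bw)_{\esb(e)}.
\]
We then apply~\eqref{eq:scurl_adjoint} on the patch. This is justified because $\mu_e$ vanishes on $\partial\esb(e)$, so $\mu_e\scurl\bpsi_e^1$ extended by zero belongs to $H^1(\Gamma)$. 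It turns the second term into $(\mu_e\scurl\bpsi_e^1,\scurl\bw)_{\esb(e)}$. By~\eqref{805.1} this equals $\phi_e^\partial(\bw)-(\beeta_e^1,\bw)_{\esb(e)}$, and the sum collapses to $\phi_e^\partial(\tr^1\bu)=\phi_e(\bu)$.
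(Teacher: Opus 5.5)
Your proposal is correct and follows the paper's proof essentially step for step. For (i) you reduce to $\bu\in\Zz\bV_p^1(\calFeeb)$, write $\bu=\sgrad w$ via exactness of~\eqref{localcomplex}, and show the right-hand side vanishes by integrating by parts against $\beeta_e^1$ and using~\eqref{eta1def}, the vertex identity and~\eqref{eq:iota_ev_bnd}; (ii) is the same integration by parts of the $\srot$ term followed by~\eqref{805.1}. The locality point you flag is resolved as you suggest: by definition of $V_p^0(\calFeeb)$, $w=\tr^0(\tilde w)|_{\esb(e)}$ for some $\tilde w\in V_p^0$, so $(\zeta_{0,v}^0,w)_{\esb(v)}=\phi_v(\tilde w)=w(v)$, which is what the paper tacitly uses when it writes $m\in\tr^0(V_p^0)$.
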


\begin{proof}
\textup{(i)} We need to show that \eqref{805.1} also holds true for all 
$\bu \in \Zz \bV_p^1(\calFeeb)$. Since $\scurl \bu=0$, the exactness of the discrete sequence~\eqref{localcomplex} implies that $\bu=\sgrad m$ for some $m \in V_p^0(\calFeeb)$. We extend $m$ by zero outside $\esb(e)$. We observe that
\begin{alignat*}{2}
(\beeta^1(e), \sgrad m )_{\esb(e)}&= -( \sdive \beeta_e^1,  m )_{\esb(e)} \quad && \text{integration by parts, $\beeta^1(e)\in \mbVpp{1}(\calFeebA)$ } \\
&=  \sum_{v\in\Ve} \iota_{ev} (\zeta_{0,v}^0, m )_{\esb(e)} \quad && \text{by~\eqref{eta1def}} \\
&= \sum_{v\in\Ve} \iota_{ev} \phi^\partial_v(m)   \quad && \text{by~\eqref{eq:zeta_dof_0}, $m\in \tr^0(V_p^0)$} \\
&= \phi_e^\partial(\sgrad m), \quad && \text{by~\eqref{eq:iota_ev_bnd}}.
\end{alignat*}
This shows that \eqref{805.1} also holds true for all $\bu \in \Zz \bV_p^1(\calFeeb)$.

\textup{(ii)} Let $\bu\in\bV_p^1$. The identity follows from
\begin{align*}
(\bzeta_{0,e}^1 , \tr^1(\bu))_\Gamma
&= (\beeta_e^1 , \tr^1(\bu))_{\esb(e)}  + (\srot(\mu_e \scurl \bpsi_e^1) , \tr^1(\bu))_{\esb(e)} \\
&= (\beeta_e^1 , \tr^1(\bu))_{\esb(e)}  + (\mu_e \scurl \bpsi_e^1 , \scurl(\tr^1(\bu)))_{\esb(e)}\\
&= \phi_e^\partial(\tr^1(\bu)) = \phi_e(\bu),
\end{align*}
where we used \eqref{805.1} since $\tr^1(\bu)|_{\esb(e)} \in \bV_p^1(\calFeeb)$.
\end{proof}

\subsubsection{Construction of $\zeta_{0,f}^2$ and $Y_{0,f}^2$ for all $f\in\Fh^\partial$}

Combining~\eqref{eta1def} and~\eqref{defzeta1}, we infer that 
\begin{equation} \label{eq:sdiv_zeta1}
\sdive \bzeta_{0,e}^1 = 
- \sum_{v\in\Ve} \iota_{ev}\zeta_{0,v}^0. 
\end{equation}
This implies that
\begin{align}
\sdive \left( \sum_{e\in\Ef} \iota_{f e} \bzeta_{0,e}^1\right) = -\sum_{e\in\Ef}\sum_{v\in\Ve}\iota_{f e}\iota_{ev}\zeta_{0,v}^0 = -\sum_{v\in\Vf}\sum_{e\in\Ev\cap\Ef} \iota_{f e}\iota_{ev}\zeta_{0,v}^0 = 0,
\end{align}
where we used~\eqref{eq:CURL_GRAD} in the last equality. Hence, by the exactness of the discrete sequence~\eqref{localcomplexm} and the Poincar\'e inequality~\eqref{onto0m}, there exists $\eta_f^2 \in \Zz^{\perp} \mVp{0}(\calFefbA) = \mVp{0}(\calFefbA)$ such that 
\begin{equation}\label{eta2def}
\srot \eta_f^2= \sum_{e\in\Ef} \iota_{f e} \bzeta_{0,e}^1 \quad \text{ on $\esb(f)$}.
\end{equation}
We then set 
\begin{equation}\label{defZ3}
\zeta_{0,f}^2:=\eta_f^2 \quad \text{in $\esb(f)$},
\end{equation}
and extend $\zeta_{0,f}^2$ by zero outside $\esb(f)$.
Notice that $\zeta_{0,f}^2\in \mVp{0}(\calFefbA)$.
Finally, we define $Y_{0,f}^2$ as the element of $V_p^0(\ThWF)$ obtained by setting its boundary degrees of freedom so that $\tr^0(Y_{0,f}^2)|_{\esb(f)} = \zeta_{0,f}^2$ and all its bulk (Worsey--Farin) degrees of freedom to zero. Hence, $\supp(Y_{0,f}^2) \subset \clos(\es(f))$.

\begin{lemma}[Link to lowest-order boundary dof]
The following holds:
\begin{equation} \label{eq:zeta_dof_2}
(\zeta_{0,f}^2 , \tr^2(\bu))_\Gamma = \phi_f(\bu), \quad  \forall \bu\in \bV^2_p.
\end{equation}
\end{lemma}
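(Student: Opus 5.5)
The plan is to reduce the identity to the exactness of the lowest-order surface complex. We then test against the kernel and a complement of $\scurl$ separately, as in the proofs for $l=0,1$ but one level up.

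Fix $\bu\in\bV_p^2$ and set $w:=\tr^2(\bu)$. Then $w|_{\esb(f)}\in V_p^2(\calFesb)$. Since $\zeta_{0,f}^2$ is supported on $\clos(\esb(f))$, we have $(\zeta_{0,f}^2,\tr^2(\bu))_\Gamma=(\eta_f^2,w)_{\esb(f)}$. By the exactness of~\eqref{localcomplex} on $\esb(f)$, where the last map is $0$ so that $\scurl$ is onto, there is $\bm\in\bV_p^1(\calFefb)$ with $\scurl\bm=w$ on $\esb(f)$. Here $\bm$ is the restriction of $\tr^1$ of some element of $\bV_p^1$, so its tangential trace is single-valued on boundary edges of $\esb(f)$. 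Then:
\begin{align*}
(\eta_f^2,w)_{\esb(f)}=(\eta_f^2,\scurl\bm)_{\esb(f)}
&=(\srot\eta_f^2,\bm)_{\esb(f)}\\
&=\sum_{e\in\Ef}\iota_{fe}(\bzeta_{0,e}^1,\bm)_{\esb(f)}.
\end{align*}
The second equality is the local version of~\eqref{eq:scurl_adjoint}. It is legitimate because $\eta_f^2\in\mVp{0}(\calFefbA)$ vanishes on $\partial\esb(f)$, so no boundary term arises. The third uses~\eqref{eta2def}.

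Next, each $\bzeta_{0,e}^1$ is supported in $\clos(\esb(e))\subset\clos(\esb(f))$ for $e\in\Ef$. We therefore want to apply~\eqref{eq:zeta_dof_1}, or more precisely its local form obtained from~\eqref{805.1}, to $\bm$. The clean way is to choose $\bm$ as (the restriction of) $\tr^1(\bv)$ for some $\bv\in\bV_p^1$. The point to check is that this choice is possible. Since $\bm\in\bV_p^1(\calFefb)$, by definition~\eqref{eq:local_spaces} it is a restriction of $\tr^1(\bv)$ with $\bv\in\bV_p^1$. The values of $\tr^1(\bv)$ outside $\esb(f)$ are irrelevant because the supports of $\bzeta_{0,e}^1$ lie in $\clos(\esb(e))\subset\clos(\esb(f))$. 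Hence
\[
(\bzeta_{0,e}^1,\bm)_{\esb(f)}=(\bzeta_{0,e}^1,\tr^1(\bv))_\Gamma=\phi_e(\bv)=\phi_e^\partial(\bm).
\]
Summing and using~\eqref{eq:iota_fe_bnd} gives $\sum_{e\in\Ef}\iota_{fe}\phi_e^\partial(\bm)=\phi_f^\partial(\scurl\bm)=\phi_f^\partial(w)=\phi_f(\bu)$, since $f\subset\esb(f)$. This is the claim.

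The main obstacle is justifying the integration by parts $(\eta_f^2,\scurl\bm)=(\srot\eta_f^2,\bm)$ on the Alfeld-refined patch. The function $\eta_f^2$ is continuous and piecewise polynomial on $\calFefbA$ with zero trace on $\partial\esb(f)$, and $\bm$ has tangential continuity across edges. Elementwise integration by parts on each sub-triangle therefore produces interface terms that cancel, and boundary terms that vanish. A second point to check is that no ambiguity arises from nonuniqueness of $\bm$, but the identity holds for any admissible $\bm$, so this is harmless.
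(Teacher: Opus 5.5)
Your proof is correct and follows essentially the same route as the paper. You use the surjectivity of $\scurl$ from the exact sequence~\eqref{localcomplex} to write $\tr^2(\bu)|_{\esb(f)}$ as the surface curl of a restricted discrete trace, then apply~\eqref{eq:scurl_adjoint} (no boundary term since $\zeta_{0,f}^2$ vanishes on $\partial\esb(f)$), \eqref{eta2def}, \eqref{eq:zeta_dof_1} edge by edge, and finally~\eqref{eq:iota_fe_bnd}. The kernel/complement splitting announced in your opening sentence is never needed at this level, and indeed your argument does not use it.
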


\begin{proof}
Let $\bu\in \bV^2_p$. By the exactness of the discrete sequence~\eqref{localcomplex}, there exists $\bbm \in \bV_p^1(\calFefb)$ such that $\scurl \bbm=\tr^2(\bu)|_{\esb(f)}$. Notice that $\bbm = \tr^1(\tilde{\bbm})|_{\esb(f)}$ with $\tilde{\bbm}\in \bV_p^1$. We obtain
\begin{alignat*}{2}
( \zeta_{0,f}^2, \scurl \bbm )_{\esb(f)} &=
( \srot \zeta_{0,f}^2, \bbm )_{\esb(f)} \quad && \text{by \eqref{eq:scurl_adjoint}} \\
&= \sum_{e\in\Ef} \iota_{f e}( \bzeta_{0,e}^1, \tr^1(\tilde{\bbm}) )_{\esb(f)}   \quad && \text{by~\eqref{eta2def}, $\bbm = \tr^1(\tilde{\bbm})|_{\esb(f)}$} \\
&= \sum_{e\in\Ef} \iota_{f e} \phi_e^\partial(\bbm) \quad && \text{by~\eqref{eq:zeta_dof_1}, $\phi_e(\tilde{\bbm}) = \phi_e^\partial(\bbm)$} \\
&= \phi_f^\partial(\scurl \bbm) \quad && \text{by~\eqref{eq:iota_fe_bnd}}.
\end{alignat*}
This proves that $( \zeta_{0,f}^2, \scurl \bbm )_{\esb(f)} = 
\phi_f^\partial(\tr^2(\bu)) = \phi_f(\bu)$.
\end{proof}

\subsection{Properties of the lowest-order boundary weights}

\begin{lemma}[Link to lowest-order boundary dofs]
The following holds for all $r\in I_0^l$ and all $l\in\{0{:}2\}$:
\begin{equation} \label{eq:zeta_dof_all}
(\zeta_{0,r}^l , \tr^l(u))_\Gamma = \phi_r(u), \quad  \forall u\in V^l_p.
\end{equation}
\end{lemma}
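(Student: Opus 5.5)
This lemma collects the three identities already established in the preceding subsections, so the plan is to argue case by case in $l$. Recall that $I_0^l=\Delta_h^{l,\partial}$ and that the weights $\zeta_{0,r}^l$ are the ones built in the three constructions above. For $l=0$ the statement is exactly \eqref{eq:zeta_dof_0}, with $r=v\in\Vh^\partial$. For $l=1$ it is exactly \eqref{eq:zeta_dof_1}, with $r=e\in\Eh^\partial$. For $l=2$ it is exactly \eqref{eq:zeta_dof_2}, with $r=f\in\Fh^\partial$.

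The only point needing care is the ordering of the arguments. The proofs are sequential, and each step relies on the previous identity being valid for \emph{all} discrete fields, not just those supported in the local patch.
\begin{itemize}
\item In the proof of \eqref{805.1}, the identity \eqref{eq:zeta_dof_0} is applied to a function $m\in V_p^0(\calFeeb)$, extended by zero outside $\esb(e)$. One must check that this extension is the trace of some element of $V_p^0$. It is, since it is continuous on $\Gamma$ and piecewise $\pol_{p+1}$ on the boundary faces; a discrete extension into the bulk can then be obtained from the degrees of freedom. Moreover, only the values of $m$ on $\esb(v)\subset\esb(e)$ enter.
\item Likewise, in the proof of \eqref{eq:zeta_dof_2}, \eqref{eq:zeta_dof_1} is applied to $\tr^1(\tilde\bbm)$, where $\tilde\bbm\in\bV_p^1$ is a global discrete field. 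The inner products only involve $\esb(e)\subset\esb(f)$ for $e\in\Ef$, so the local representative $\bbm$ and the global trace $\tr^1(\tilde\bbm)$ agree where it matters.
\end{itemize}

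There is also a surjectivity step. The proof of \eqref{eq:zeta_dof_2} uses that $\scurl$ maps $\bV_p^1(\calFefb)$ onto $\bV_p^2(\calFefb)$; this follows from the exactness of \eqref{localcomplex}, which rests on Assumption~\ref{ass:cont_esb}.

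So the proof reduces to citing \eqref{eq:zeta_dof_0}, \eqref{eq:zeta_dof_1} and \eqref{eq:zeta_dof_2} in turn. The main obstacle I anticipate is only the justification of these extension and localization arguments, which I would state explicitly. As a consequence, noting that $\phi_r(B_{0,r'}^l)=\phi_r(W_{r'}^l)=\delta_{r,r'}$ by \eqref{eq:dof_Whitney}, the lemma yields \eqref{eq:Kron_zeta}. That is the purpose of the result in verifying Assumption~\eqref{eq:ass_zeta0}.
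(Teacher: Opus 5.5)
Your proposal matches the paper's proof, which simply cites \eqref{eq:zeta_dof_0}, \eqref{eq:zeta_dof_1} and \eqref{eq:zeta_dof_2} in turn. One side remark is wrong but harmless: the zero extension of $m$ outside $\esb(e)$ is in general not continuous on $\Gamma$, since nothing forces $m$ to vanish on $\partial\esb(e)$; instead, take $\tilde m\in V_p^0$ with $\tr^0(\tilde m)|_{\esb(e)}=m$ (as you do for $\bbm$) and use that $\zeta_{0,v}^0$ is supported in $\clos(\esb(v))\subset\clos(\esb(e))$.
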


\begin{proof}
The identity~\eqref{eq:zeta_dof_all} is established in~\eqref{eq:zeta_dof_0} for $l=0$, 
in~\eqref{eq:zeta_dof_1} for $l=1$, and in~\eqref{eq:zeta_dof_2} for $l=2$. 
\end{proof}

\begin{lemma}[Fulfillment of~\eqref{eq:ass_zeta0}]
The boundary weights $\{\zeta_{0,r}^l\}_{l\in\{0{:}2\}}$ satisfy~\eqref{eq:ass_zeta0}.
\end{lemma}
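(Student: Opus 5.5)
We check the four groups of conditions in~\eqref{eq:ass_zeta0} in turn, using the constructions~\eqref{defZ0}, \eqref{defzeta1}, \eqref{defZ3} and the identity~\eqref{eq:zeta_dof_all}.

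\emph{Condition~\eqref{eq:Kron_zeta}.} Since $B_{0,r'}^l=W_{r'}^l$, identity~\eqref{eq:zeta_dof_all} gives $(\zeta_{0,r}^l,\tr^l(W_{r'}^l))_\Gamma=\phi_r(W_{r'}^l)$, and this equals $\delta_{r,r'}$ by~\eqref{eq:dof_Whitney}.

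\emph{Conditions~\eqref{deltazeta1}--\eqref{deltazeta2}.} For $l=0$, identity~\eqref{eq:sdiv_zeta1} gives $-\sdive\bzeta_{0,e}^1=\sum_{v\in\Ve}\iota_{ev}\zeta_{0,v}^0$. Since $\kappa_0^{e,v}=\iota_{ev}$ vanishes when $v\notin\Ve$, this is~\eqref{deltazeta1}. Identity~\eqref{eq:sdiv_zeta1} itself follows from~\eqref{eta1def} and $\sdive\srot=0$ applied to~\eqref{defzeta1}. Some care is needed because $\mu_e\scurl\bpsi_e^1$ is only piecewise smooth. It is, however, continuous on $\esb(e)$ and vanishes on $\partial\esb(e)$ because of the factor $\mu$, so $\srot$ of it lies in $\bH(\sdive,\Gamma)$ and has zero surface divergence. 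For $l=1$, definitions~\eqref{defZ3} and~\eqref{eta2def} give $\srot\zeta_{0,f}^2=\sum_{e\in\Ef}\iota_{fe}\bzeta_{0,e}^1$, which is~\eqref{deltazeta2}.

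\emph{Condition~\eqref{eq:l=2pou}.} From~\eqref{eq:W_iota}, $\tr^3(\dive\bW_f)=\sum_{\tau\in\Tf}\iota_{\tau f}$. For a boundary face $f$ there is a single adjacent tetrahedron $\tau$, so this equals $\pm1$, with the sign fixed by the relative orientation of $\bn_f$ and the outward normal $\bn$. Hence $\sum_f\tr^3(\dive\bW_f)\zeta_{0,f}^2$ pairs against any $\tr^0(u)$ with $u\in V_p^0$ like the functional $\sum_f \pm\int_f \tr^0(u)\,\bn_f\cdot\bn$. I would avoid the pointwise identity with $1$ as stated, since $\zeta^2$ is Alfeld-piecewise and so cannot be tested this way directly. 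Instead I would verify that it equals $1$ in the required weak sense: for every $\bu\in\bV^2$, $\sum_f\tr^3(\dive\bW_f)\,\alpha_{0,f}^2(\bu)=\int_\Gamma\bu\cdot\bn$. Checking this first on $\bV_p^2$ uses~\eqref{eq:zeta_dof_2} and the relation $\phi_f(\bu)=\pm\int_f\bu\cdot\bn$. If the pointwise statement is genuinely needed, I would set $w:=\sum_f\tr^3(\dive\bW_f)\zeta_{0,f}^2-1$ and try to show $w=0$ through the $\srot$-exactness of~\eqref{localcomplexm}. This step seems the most delicate, and it may require showing that the sum of the $\eta_f^2$ is constant, using that $\sum_f\pm\iota_{fe}=0$ for each boundary edge (each boundary edge lies in two boundary faces, with opposite induced signs) together with injectivity of $\srot$ modulo constants on $\Gamma$.

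\emph{Condition~\eqref{eq:support_Y0}.} The supports are immediate from the constructions. For the bounds, scaling gives $\|Y\|_{L^2(\Omega)}\lesssim h^{1/2}\|\zeta\|_{L^2(\Gamma)}$ and an inverse inequality handles $d^{2-l}Y$. We therefore need $\|\zeta_{0,r}^l\|_{L^2(\Gamma)}\lesssim h^{1/2-\beta}=h^{l-1}$. For $l=0$: $\|\eta_v^0\|\lesssim h^{-1}$, while testing~\eqref{605} with $\psi_v^0$ and using~\eqref{onto0} together with the trace bound $|\phi_v^\partial(u)|\lesssim h^{-1}\|u\|$ gives $\|\sgrad\psi_v^0\|\lesssim h^{-1}$. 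An inverse inequality for $\sdive$ and $\|\mu\|_\infty\le1$ then yield $\|\zeta_{0,v}^0\|\lesssim h^{-1}$. For $l=1$, the bound on $\beeta_e^1$ comes from~\eqref{onto1m}, which gives $h\cdot h^{-1}$, and the same energy argument as for $l=0$ gives $h^0$. For $l=2$, the bound follows from~\eqref{onto0m}, which gives $h\cdot h^0$. This is a routine scaling bookkeeping.
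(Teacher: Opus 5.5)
\paragraph{What matches the paper.} Your treatment of \eqref{eq:Kron_zeta}, \eqref{deltazeta1}--\eqref{deltazeta2} and \eqref{eq:support_Y0} follows the paper's proof. Your justification that $\sdive\srot(\mu_e\scurl\bpsi_e^1)=0$ is a useful extra detail. There is one scaling slip. Testing \eqref{605} with $\psi_v^0$ and using \eqref{onto0} gives $\|\sgrad\psi_v^0\|^2\lesssim h_v^{-1}\|\psi_v^0\|\lesssim\|\sgrad\psi_v^0\|$, i.e.\ $\|\sgrad\psi_v^0\|\lesssim 1$, not $h_v^{-1}$. With your bound, the inverse inequality would only give $\|\zeta_{0,v}^0\|\lesssim h_v^{-2}$.

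\paragraph{The gap: the partition of unity \eqref{eq:l=2pou}.} Set $g:=\sum_{f\in\Fh^\partial}\tr^3(\dive\bW_f)\zeta_{0,f}^2\in H^1(\Gamma)$. The pointwise statement $g\equiv1$ is meaningful and is genuinely needed. Your ``weak'' replacement, required for all $\bu\in\bV^2$, reads $\langle\bu\cdot\bn,g-1\rangle_\Gamma=0$ for all $\bu\in\bV^2$ by Green's formula, since $\tr^0(Y_{0,f}^2)=\zeta_{0,f}^2$. This is equivalent to $g\equiv1$, not weaker.

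Checking it on $\bV_p^2$ only shows that $g-1$ is $L^2(\Gamma)$-orthogonal to $\tr^2(\bV_p^2)$, the piecewise $\pol_p$ functions on boundary faces. That does not force $g=1$, because $g$ is a continuous piecewise $\pol_{p+1}$ function on the boundary Alfeld split, a larger space. Nor can the check be extended by density. The functional is only known to vanish on $\bV_p^2$ plus zero-normal-trace fields, i.e.\ on $V_{\tr,p}^2$, which is a closed proper subspace of $\bV^2$.

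\paragraph{Your fallback and the missing step.} Your fallback shows $\srot g=\bzero$ via the cancellation $\sum_{f\in\Fe\cap\Fh^\partial}\iota_{fe}\tr^3(\dive\bW_f)=0$ for each boundary edge. This is the right first step, and your orientation argument is valid. The paper obtains the same cancellation algebraically: $\tr^3(\dive\bW_f)=0$ for interior faces, so the sum extends to all $f\in\Fe$ and then vanishes by \eqref{eq:W_iota} and \eqref{eq:DIV_CURL}. Two points need correcting:
\begin{itemize}
\item The local exactness \eqref{localcomplexm} plays no role here.
\item This step only gives $g=c_j$ on each connected component $\Gamma_j$ of $\Gamma$.
\end{itemize}
The missing step is to show $c_j=1$, and this is exactly where your discrete test belongs. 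Take $\bu_j\in\bV_0^2$ with $\tr^2(\bu_j)=\chi_{\Gamma_j}$. Then \eqref{eq:zeta_dof_2} gives $c_j|\Gamma_j|=(g,\tr^2(\bu_j))_\Gamma=\sum_{f\in\Fh}\tr^3(\dive\bW_f)\phi_f(\bu_j)=\tr^3(\dive\bu_j)=|\Gamma_j|$. So the two arguments you present as alternatives must be combined: first constancy from $\srot g=\bzero$, then the value of each constant from testing against discrete fields.
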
 

\begin{proof}
Recall that $B_{0,r}^l := W_r^l$ for all $r\in I_0^l:=\Delta_h^{l,\partial}$ and all $l\in\{0{:}2\}$, and that $\kappa_0^{r,r'} := \iota_{r',r}$, $\sigma_{0,r}^l:=r$, $\beta_{0,r}^l:=\frac32-l$.

Proof of~\eqref{eq:Kron_zeta}. We need to prove that $(\zeta_{0,r}^l,\tr^l(B_{0,r'}^l))_\Gamma = \delta_{r,r'}$ for all $r,r'\in I_0^l$. This is an immediate consequence of~\eqref{eq:zeta_dof_all}.

Proof of~\eqref{deltazeta1}-\eqref{deltazeta2}. We need to prove that $-\sdive (\bzeta_{0,r'}^1) = \sum_{r \in I_0^0} \kappa_0^{r',r} \zeta_{0,r}^0$ for all $r'\in I_0^1$, and that
$\srot (\zeta_{0,r'}^2) = \sum_{r\in I_0^1} \kappa_0^{r',r} \bzeta_{0,r}^1$ for all $r'\in I_0^2$. 
Since the $\kappa_0^{r',r}$ are the incidence numbers, these identities are nothing but~\eqref{eq:sdiv_zeta1} and~\eqref{eta2def} (since $\eta^2_f=\zeta_{0,f}^2$).

Proof of~\eqref{eq:support_Y0}. We need to prove that, for all $l\in\{0{:}2\}$ and 
all $r\in I_0^l$, $\supp(Y_{0,r}^l) \subseteq \clos(\es(r))$ and
$h_{r} \|d^{2-l} Y_{0,r}^l\|_{L^2(\Omega)} \lesssim \|Y_{0,r}^l\|_{L^2(\Omega)}  \lesssim h_r^{l-\frac12}$. The support property is part of the construction, and the bound $h_{r} \|d^{2-l} Y_{0,r}^l\|_{L^2(\Omega)} \lesssim \|Y_{0,r}^l\|_{L^2(\Omega)}$ follows from an inverse inequality and the shape-regularity of the mesh. It remains to show that 
\begin{equation} \label{eq:bnd_zeta_l}
\|\zeta_{0,r}^l\|_{L^2(\esb(r))} \lesssim h_r^{l-1}, \quad \forall l\in\{0{:}2\},
\end{equation} 
since a scaling argument readily gives $\|Y_{0,r}^l\|_{L^2(\Omega)} \lesssim h_r^{\frac12} \|\zeta_{0,r}^l\|_{L^2(\esb(r))}$.

Proof of~\eqref{eq:bnd_zeta_l} for $l=0$. We first observe that 
$\| \eta_v^0\|_{L^2(\esb(v))}\lesssim h_v^{-1}$.
Moreover, using~\eqref{605}, the Cauchy--Schwarz inequality and the inverse inequality $\|\psi_v^0\|_{L^\infty(\esb(v))} \lesssim h_v^{-1} \|\psi_v^0\|_{L^2(\esb(v))}$, we obtain 
\begin{align*}
\| \sgrad \psi_v^0\|_{\bL^2_{\textsc{t}}(\esb(v))}^2 &\lesssim (\mu_{v} \, \sgrad \psi_v^0, \sgrad \psi_v^0)_{\esb(v)} \\
&=\phi_v^\partial(\psi_v^0) - ( \eta_v^0, \psi_v^0)_{\esb(v)} \\
&\le \|\psi_v^0\|_{L^\infty(\esb(v))} + \| \eta_v^0\|_{L^2(\esb(v))} \|\psi_v^0\|_{L^2(\esb(v))} \\
&\lesssim h_v^{-1}\|\psi_v^0\|_{L^2(\esb(v))}.
\end{align*}
Invoking the Poincar\'e inequality~\eqref{onto0}
to bound $\|\psi_v^0\|_{L^2(\esb(v))}$ gives
$\| \sgrad \psi_v^0\|_{\bL_{\textsc{t}}^2(\esb(v))} \lesssim 1$.
Invoking an inverse estimate, we obtain
\begin{equation*}
  \| \sdive \big(\mu_v \, \sgrad \psi_v^0 \big)\| _{L^2(\esb(v))}\lesssim h_v^{-1} \| \mu_v \, \sgrad  \psi_v^0\|_{\bL^2(\esb(v))} \lesssim h_v^{-1}.
\end{equation*}
Combining the above estimates proves that $\|\zeta_{0,v}^0\|_{L^2(\esb(v))} \lesssim h_v^{-1}$.

Proof of~\eqref{eq:bnd_zeta_l} for $l=1$. The Poincar\'e inequality~\eqref{onto1m}, the triangle inequality, the above bound on $\|\zeta_{0,v}^0\|_{L^2(\esb(v))}$, and the shape-regularity of the mesh give
\[
\| \beeta_e^1\|_{\bL^2(\esb(e))} \lesssim h_e  \sum_{v\in\Ve} \|\zeta_{p,v}^0\|_{L^2(\esb(v))} \lesssim 1.
\]
Using~\eqref{605.1} and proceeding as above, we obtain 
\begin{align*}
\| \scurl \bpsi_e^1\|_{L^2(\esb(e))}^2 &\lesssim (\mu_e\scurl  \bpsi_e^1,\scurl  \bpsi_e^1)_{\esb(e)} \\
&=\phi_e^\partial(\bpsi_e^1) -(\beeta_e^1, \bpsi_e^1)_{\esb(e)} \\
&\le h_e \|\bpsi_e^1\|_{\bL^\infty(\es(e))} + \| \beeta_e^1\|_{\bL_{\textsc{t}}^2(\esb(e))} \|\bpsi_e^1\|_{\bL^2_{\textsc{t}}(\esb(e))} \\
&\lesssim \|\bpsi_e^1\|_{\bL^2_{\textsc{t}}(\esb(e))}.
\end{align*}
Invoking the Poincar\'e inequality~\eqref{onto1} gives $\| \scurl \bpsi_e^1\|_{L^2(\esb(e))} \lesssim h_e$ and invoking an inverse estimate, we obtain 
\begin{equation*}
  \| \srot \big(\mu_e \, \scurl \bpsi_e^1 \big)\| _{\bL^2_{\textsc{t}}(\esb(e))}\lesssim  1.
\end{equation*}
Combining the above estimates proves that $\|\bzeta_{p,e}^1\|_{\bL^2_{\textsc{t}}(\esb(e))} \lesssim 1$.

Proof of~\eqref{eq:bnd_zeta_l} for $l=2$. Invoking the Poincar\'e inequality~\eqref{onto0m}, the triangle inequality, the above bound on $\|\bzeta_{p,e}^1\|_{\bL^2_{\textsc{t}}(\esb(e))}$, and the shape-regularity of the mesh, we infer that
\begin{equation}\label{eta2boundprel}
\| \zeta_{0,f}^2\|_{L^2(\esb(f))} \lesssim h_f \sum_{e\in\Ef}  \|\bzeta_{p,e}^1\|_{\bL^2_{\textsc{t}}(\esb(e))} \lesssim h_f.
\end{equation}
This concludes the proof of~\eqref{eq:bnd_zeta_l}.

Proof of~\eqref{eq:l=2pou}. We need to prove that $g:=\sum_{f \in \mathcal{F}_h^\partial} \tr^3(\dive(\bW_f)) \zeta_{0,f}^2 = 1$ on $\Gamma$. 
We first show that $\srot(g) = \bzero$. To this end, we first observe using \eqref{eta2def} that
\begin{align*}
    \srot(g) &= \sum_{f\in\Fh^{\partial}} \tr^3(\dive(\bW_f)) \srot(\zeta_{0,f}^2)\\
    &= \sum_{f\in\Fh^{\partial}} \tr^3(\dive(\bW_f)) \sum_{e\in\Ef}\iota_{fe}\bzeta_{0,e}^1 = \sum_{e\in\Eh^{\partial}} \left(\sum_{f\in\Fe\cap \Fh^{\partial}} \iota_{fe} \tr^3(\dive(\bW_f))\right)\bzeta_{0,e}^1.
\end{align*}
Moreover, using the divergence theorem, and that $\tr^2(\bW_f) = 0$ for all $f\in \mFh$, followed by \eqref{eq:W_iota} and \eqref{eq:DIV_CURL}, we have for all $e\in\Eh^\partial$,
\begin{align*}
    \sum_{f\in\Fe\cap \Fh^{\partial}} \iota_{fe} \tr^3(\dive(\bW_f)) &= \sum_{f\in\Fe\cap \Fh^{\partial}} \iota_{fe} (\tr^2(\bW_f), 1)_\Gamma = \sum_{f\in\Fe} \iota_{fe} (\tr^2(\bW_f), 1)_\Gamma\\
    &= \sum_{f\in\Fe} \iota_{fe} \tr^3(\dive(\bW_f)) = \sum_{f\in\Fe} \iota_{fe} \sum_{\tau\in\Tf} \iota_{\tau f} \tr^3(W_\tau)\\
    &= \sum_{\substack{\tau\in\Th \\ e\subset \tau}}\left( \sum_{f\in\Fe\cap \Ft} \iota_{\tau f}\iota_{fe}\right) \tr^3(W_\tau) = 0.
\end{align*}
Hence, $\srot(g) = \bzero$. Let $\{\Gamma_j\}_{j\in\{0{:}J\}}$ be the connected components of $\Gamma$. Since $\srot(g)=\bzero$, there exists a constant $c_j$ so that $g|_{\Gamma_j}=c_j$ for all $j\in\{0{:}J\}$. Let $j\in\{0{:}J\}$ and let us show that $c_j=1$. There exists $\bu_j\in \bV_0^2\subset \bV^2_p$ so that $\tr^2(\bu)|_{\Gamma_j}=1$ and $\tr^2(\bu)|_{\Gamma_{j'}}=0$ for all $j'\ne j$. Repeated applications of the divergence theorem give
\begin{align*}
    c_j\, \tr^3(\dive(\bu_j)) = (g, \tr^2(\bu_j))_\Gamma &= \sum_{f\in\Fh^\partial}\tr^3(\dive(\bW_f))\phi_f(\bu_j) \\
&= \sum_{f\in\Fh}\tr^3(\dive(\bW_f)) \phi_f(\bu_j) = \tr^3(\dive(\bu_j)),
\end{align*}
where the second equality follows from the definition of $g$ and~\eqref{eq:zeta_dof_2}. Since we chose $\bu_j$ so that $\tr^2(\bu)|_{\Gamma_j}=1$, we have $\tr^3(\dive(\bu_j)) = |\Gamma_j| \neq 0$, and so we conclude that $c_j=1$.
\end{proof}

\section{Construction of the higher-order basis functions and boundary weights} 
\label{sec:high_order}

In this section, we address the construction of the higher-order basis functions and boundary weights. We only outline the main ideas as they are adapted from \cite[Section 5]{Arn_Guz_loc_stab_L2_com_proj_21}, which dealt with the bulk setting, to the present boundary setting. We assume $p\ge1$ throughout this section (recall that $P_+^l=0$ if $p=0$, see Remark~\ref{rem:p=0}). 

\subsection{Higher-order basis functions}

For all $l\in\{0{:}2\}$, we set 
\begin{equation} \label{eq:def_V+_dofs}
    V_{+,p}^l := \{v\in V_p^l: \phi_\sigma(v) = 0, \forall \sigma\in\Delta_h^l\}.
\end{equation}
It is clear that $V_p^l=V_0^l\oplus V_{+,p}^l$, so that the decomposition~\eqref{eq:linear_dec} holds true.

To define the higher-order basis functions, we consider boundary geometric objects $\sigma \in \Delta_h^\partial$ with $\dim(\sigma)\in \mathfrak{d}^l$, with $\mathfrak{d}^0:=\{1{:}\min(2,p)\}$, $\mathfrak{d}^1:=\{1{:}2\}$, and $\mathfrak{d}^2:=\{2\}$, and we set $\mathfrak{D}^{l,\partial}:=\bigcup_{l'\in\mathfrak{d}^l} \Delta_h^{l',\partial}$. It is useful to define traces on boundary geometric objects of functions belonging to the trace spaces. Specifically, we set
\begin{subequations}
\begin{alignat}{5}
&\tr^0_e(\mu):=\mu|_e,\;&&\forall e\in \Eh^\partial,\quad&&
\tr^0_f(\mu):=\mu|_f,\;&&\forall f\in\Fh^\partial,\qquad&&
\forall \mu\in \tr^0(V_p^0), \\
&\tr^1_e(\bmu):=\bmu|_e{\cdot}\bt_e,\;&&\forall e\in \Eh^\partial,\quad&&
\tr^1_f(\bmu):=\bmu|_f,\;&&\forall f\in\Fh^\partial,\qquad&&
\forall \bmu\in \tr^1(\bV_p^1), \\
&&&&&\tr^2_f(\mu):=u|_f,\;&&\forall f\in\Fh^\partial,\qquad&&
\forall \mu\in \tr^2(\bV_p^2).
\end{alignat} 
\end{subequations}
Notice that we only define $\tr^l_\sigma$ with $\sigma\in \Delta_h^{l',\partial}$ when $l'\ge \max(l,1)$.
For all $l\in\{0{:}2\}$ and all $\sigma\in \Delta_h^{l',\partial}$ with $l'\ge \max(l,1)$, we set
\begin{equation}
V_p^l(\sigma) := \tr^l_\sigma ( \tr^l (V_p^l)),
\end{equation}
as well as
\begin{equation}
\mV_p^l(\sigma) := \begin{cases}
\{\varphi \in V_p^l(\sigma)\::\: \varphi|_{\partial\sigma}=0\}&\text{if $l<\dim(\sigma)$},\\
\{\varphi \in V_p^l(\sigma)\::\: (\varphi,1)_{\sigma}=0\}&\text{if $l=\dim(\sigma)$}.
\end{cases}
\end{equation}
We equip the space $\mV_p^l(\sigma)$ with the following inner product:
\begin{equation}
\bll \varphi,\psi \brr_\sigma^l := (\varphi,\psi)_{\sigma} + (d^l_\sigma\varphi,d^l_\sigma\psi)_{\sigma},
\end{equation}
where the right-hand side consists of $L^2(\sigma)$- or $\bL^2(\sigma)$-inner products, and the exterior derivatives on $\sigma$ are defined as
\begin{subequations} \begin{alignat}{4}
&d^0_e(\varphi):=(\grad\,u)|_e{\cdot}\bt_e,\quad&&\forall e\in\Eh^\partial,\quad
&&\forall \varphi:=\tr^0_e(\tr^0(u)),\quad &&u\in V_p^0, \\
&d^0_f(\varphi):=(\sgrad u)|_f,\quad&&\forall f\in \Fh^\partial,\quad
&&\forall \varphi:=\tr^0_f(\tr^0(u)),\quad &&u\in V_p^0, \\
&d^1_f(\bvarphi):=(\scurl\bu)|_f,\quad&&\forall f\in \Fh^\partial,\quad
&&\forall \bvarphi:=\tr^1_f(\tr^1(\bu)),\quad &&\bu\in \bV_p^1,
\end{alignat} \end{subequations}
together with $d^1_e\equiv0$ and $d^2_f\equiv0$. We notice that $d^l_\sigma(\mV^l_p(\sigma)) \subset \mV^{l+1}_p(\sigma)$, with the convention $\mV^3_p(\sigma):=\{0\}$. We also have
\begin{equation}\label{eq:simplex_commute}
\tr_\sigma^{l+1}(\tr^{l+1}(d^{l} u)) = d_\sigma^{l}(\tr_\sigma^{l}(\tr^l(u))), \quad \forall u\in V^l_p.
\end{equation}

For all $l\in\{0{:}2\}$ and all $\sigma \in \mathfrak{D}^{l,\partial}$, we consider a specific basis of $\mV_p^l(\sigma)$, $\Bh_p^l(\sigma)$, which we partition as 
$\Bh_p^l(\sigma) = \Zh\Bh_p^l(\sigma) \cup \Zhp\Bh_p^l(\sigma)$, with the following properties:
\begin{subequations} \begin{alignat}{2}
&\bll g,g' \brr^l_\sigma = \delta_{g,g'},\quad&&\forall g,g' \in \Bh_p^l(\sigma), \label{eq:delta_gg'}\\
&d^l_\sigma g = 0,\quad&&\forall g\in \Zh\Bh_p^l(\sigma),\\
&d^l_\sigma g \in \Zh\Bh_p^{l+1}(\sigma),\quad&&\forall g\in \Zhp\Bh_p^l(\sigma), \; \forall l\in\{0{:}1\}. 
\end{alignat} \end{subequations}
For $l=2$, $\Zhp\Bh_p^2(\sigma)=\emptyset$ for all $\sigma\in\mathfrak{D}^{2,\partial}=\Fh^\partial$, so that $\Bh_p^2(\sigma)=\Zh\Bh_p^2(\sigma)$.

We are now ready to define the higher-order basis functions and to review their properties. For all $l\in\{0{:}2\}$, the indexing set is
\begin{equation}
I_{+}^l:= \{ r:= (\sigma,g) \::\: \sigma \in \mathfrak{D}^{l,\partial}, \; g\in \Bh_p^l(\sigma)\}.
\end{equation}
The higher-order basis functions $\{B_{+,r}^l\}_{r\in I_{+}^l}\subset V_{+,p}^l$ are defined in such a way that, for all $r:= (\sigma,g)\in I_{+}^l$,
\begin{subequations} \label{eq:E_props}\begin{align}
&\tr^l_\sigma(\tr^l(B_{+,r}^l)) = g, \label{eq:trtr_Esig} \\
&d^lB_{+,r}^l = \begin{cases}
0,&\text{if $g\in \Zh\Bh_p^l(\sigma)$},\\
B_{+,(\sigma,d^l_\sigma g)}^{l+1},&\text{if $g\in \Zhp\Bh_p^l(\sigma)$}, \end{cases}
\label{eq:dE_sigg}\\
&\supp(B_{+,r}^l) \subseteq \clos(\st(\sigma)),\label{eq:E_supp}\\
&\|B_{+,r}^l\|_{L^2(\st(\sigma))} \lesssim h_\sigma^{\beta_{+,r}^l},\label{eq:E_bound}
\end{align} \end{subequations}
where $\beta_{+,r}^l := \frac12(3-\dim(\sigma))$ if $g\in \Zh\Bh_p^l(\sigma)$
and $\beta_{+,r}^l := \frac12(1-\dim(\sigma))$ if $g\in \Zhp\Bh_p^l(\sigma)$. 
We refer the reader to \cite[Section 5]{Arn_Guz_loc_stab_L2_com_proj_21} for explicit definitions of the higher-order basis functions satisfying the properties \eqref{eq:E_props}.
We note the following consequence of~\eqref{eq:delta_gg'} and~\eqref{eq:trtr_Esig}:
\begin{equation} \label{eq:tr_Esig}
\bll \tr^l_{\sigma'}(\tr^l(B_{+,r}^l)),g' \brr^l_{\sigma'} = \delta_{r,r'} = \delta_{\sigma,\sigma'}
\delta_{g,g'},\quad\forall r:=(\sigma,g)\in I_+^l,\; \forall r':=(\sigma',g') \in I_{+}^l.
\end{equation}
The family $\{B_{+,r}^l\}_{r\in I_+^l}$ is linearly independent and we have $\tr^l(V_{+,p}^l)=
\Span_{r\in I_+^l} (\tr^l(B_{+,r}^l))$, as requested in~\eqref{eq:tr_Vl_span}. Moreover,
Assumption~\ref{ass:B+} holds true. Indeed, the relation to differential 
operators~\eqref{eq:d_varphi_relation+} is satisfied with $\kappa_+^{r',r}:=\delta_{\sigma,\sigma'}
\delta_{d_\sigma^lg,g'}$, for all $r:=(\sigma,g)\in I_+^l$ and all $r':=(\sigma',g')
\in I_+^{l+1}$, and the properties~\eqref{eq:support_B+} related to the support and norm
are satisfied with $\sigma_{+,r}^l:=\sigma\in \Delta_h^\partial$ and $\beta_{+,r}^l$ defined below~\eqref{eq:E_bound}, for all $r:=(\sigma,g)\in I_+^l$. 
Finally, the mean-zero divergence property~\eqref{eq:mean_zero_div_l=2} when $l=2$ follows from \eqref{eq:dE_sigg} since $\Zhp\Bh_p^2(\sigma) = \emptyset$.

\subsection{Higher-order boundary weights}

We now turn our attention to the higher-order boundary weights $\{\zeta_{+,r}^l\}_{r\in I_+^l}$ for all $l\in\{0{:}2\}$. We split $I_+^l$ into two disjoint subsets $I_+^l = I_{\Zh}^l \cup I_{\Zhp}^l$ where $I_{\Zh}^l := \{(\sigma,g) \in I_+^l \::\: g \in \Zh\Bh_p^l(\sigma)\}$ and $I_{\Zhp}^l := \{(\sigma,g) \in I_+^l \::\: g \in \Zhp\Bh_p^l(\sigma)\}$ (notice that $I_{\Zhp}^2=\emptyset$). The reason is that the construction of the boundary weights is different according to whether $r\in I_{\Zh}^l$ or $r\in I_{\Zhp}^l$.

Let us start by defining the boundary weights $\{\zeta_{+,r}^l\}_{r\in I_{\Zh}^l}$ for all $l\in\{0{:}2\}$. For all $\sigma\in\mathfrak{D}^{l,\partial}$, we define the space
\begin{equation}\label{eq:local_spaces_M}
V_{+,p}^l(\calFstb) := \{v|_{\stb(\sig)} \::\: v=\tr^l(u),\; u\in V_{+,p}^l\}.
\end{equation}
We also let $\mu_+(\sigma) := \chi_{\stb(\sigma)} \mu$, where  $\mu$ is as in \eqref{eq:def_mu}.
Thus, the function $\mu_+(\sigma)$ is supported in $\stb(\sigma)$ and vanishes on all edges and vertices of $\calFstb$. For all $r:=(\sigma,g)\in I_{\Zh}^l$, let $\beta_{r}^l  \in V_{+,p}^l(\calFstb)$ solve the following problem:
\begin{equation} \label{eq:beta_prop}
(\mu_+(\sigma) \beta_{r}^l, v )_{\stb(\sigma)} = \bll g,\tr_\sigma^l(v)\brr_{\sigma}^l, \quad \forall v \in V_{+,p}^l(\calFstb),
\end{equation}
and set
\begin{equation} \label{eq:zeta_z}
    \zeta_{+,r}^l := \mu_+(\sigma) \beta_{r}^l.
\end{equation}
Furthermore, we define the boundary weights $\{\zeta_{+,r}^l\}_{r\in I_{\Zhp}^l}$ for all $l\in\{0{:}1\}$ by setting
\begin{subequations}\label{zeta_zperp}
\begin{alignat}{2}
\zeta_{+,r}^0 &:= -\sdive (\zeta_{+, (\sigma, d_{\sigma}^0g)}^1), \quad && \forall r:=(\sigma, g) \in I_{\Zhp}^0,\label{zeta_zperp1}\\
\bzeta_{+,r}^1 & := \srot (\zeta_{+, (\sigma, d_{\sigma}^1 g)}^2), \quad && \forall r:=(\sigma, g) \in I_{\Zhp}^1.\label{zeta_zperp2}
    \end{alignat}
\end{subequations}
Altogether, we have $\supp(\zeta_{+,r}^l) \subseteq \clos(\stb(\sigma))$ for all $r:=(\sigma, g)\in I_{+}^l$, $\zeta_{+,r}^l \in V_{p+1}^l(\calFstbA)$ for all $r\in I_{\Zh}^l$ and $\zeta_{+,r}^l \in V_{p}^l(\calFstbA)$ for all $r\in I_{\Zhp}^l$. We then extend the boundary weights by zero outside of their supports to all of $\Gamma$. Finally, for all $l\in\{0{:}2\}$ and all $r \in I_+^l$, we define the extension $Y_{+,r}^l \in V_{p+1}^l(\ThWF)$ by setting all its boundary degrees of freedom so that $\tr^l(Y_{+,r}^l)=\zeta_{+,r}^l$ and all its bulk (Worsey--Farin) degrees of freedom to zero. Hence, for all $r:=(\sigma, g)\in I_{+}^l$, $\supp(Y_{+,r}^l)\subset \clos(\st(\sigma))$.

\begin{lemma}[Fulfillment of~\eqref{eq:ass_zeta+}]
The boundary weights $\{\zeta_{+,r}^l\}_{r\in I_+^l}$ satisfy~\eqref{eq:ass_zeta+}.
\end{lemma}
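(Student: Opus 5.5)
We have to verify the three groups of properties in \eqref{eq:ass_zeta+}: the biorthogonality relations \eqref{eq:Kron_zeta+}--\eqref{eq:Kron_zeta0+}, the derivative relations \eqref{deltazeta1+}--\eqref{deltazeta2+}, and the support/boundedness bounds \eqref{eq:Ybounds+}. The derivative relations are essentially built into the definition \eqref{zeta_zperp}. Indeed, for $r'=(\sigma,g)\in I^1_{\Zhp}$ the coefficients $\kappa_+^{r',r}=\delta_{\sigma,\sigma'}\delta_{d^0_\sigma g,g'}$ select exactly the single index $(\sigma,d^0_\sigma g)$. Care is needed with the index convention: \eqref{deltazeta1+} reads $-\sdive\bzeta^1_{+,r'}=\sum_r\kappa_+^{r',r}\zeta^0_{+,r}$, and this matches \eqref{zeta_zperp1} after relabeling. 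For $r'\in I^1_{\Zh}$ the relation must say $\sdive\bzeta^1_{+,r'}=\sum_r \kappa\,\zeta^0_{+,r}$, which I will check by reading the incidence structure of $\kappa_+$ (nonzero only when $g'=d_\sigma g$, i.e.\ when $r'$ lies in the image). I expect to confirm that the sum reduces to the defining formula in every case, using that $d_\sigma$ maps $\Zhp\Bh$ injectively into $\Zh\Bh$ (orthonormality \eqref{eq:delta_gg'}). The same bookkeeping gives \eqref{deltazeta2+}.

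For the biorthogonality \eqref{eq:Kron_zeta+}, first take $r=(\sigma,g)\in I^l_{\Zh}$. By \eqref{eq:zeta_z} and \eqref{eq:beta_prop}, since $\tr^l(B^l_{+,r'})|_{\stb(\sigma)}\in V^l_{+,p}(\calFstb)$ and $\mu_+(\sigma)$ is supported in $\stb(\sigma)$, we get $(\zeta^l_{+,r},\tr^l(B^l_{+,r'}))_\Gamma=\bll g,\tr^l_\sigma\tr^l(B^l_{+,r'})\brr^l_\sigma$. This equals $\delta_{r,r'}$ by \eqref{eq:tr_Esig}, which also covers $\sigma'\neq\sigma$: there the trace on $\sigma$ vanishes or is orthogonal by the support property \eqref{eq:E_supp} and the vanishing on $\partial\sigma'$.

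Next take $r=(\sigma,g)\in I^l_{\Zhp}$. I integrate by parts on $\Gamma$ using \eqref{eq:sdive_adjoint} or \eqref{eq:scurl_adjoint}, then the commutation \eqref{eq:trace_d_commuting}, to obtain $(\zeta^l_{+,r},\tr^l B^l_{+,r'})_\Gamma=(\zeta^{l+1}_{+,(\sigma,d_\sigma g)},\tr^{l+1}d^lB^l_{+,r'})_\Gamma$. Applying \eqref{eq:dE_sigg} to $d^lB^l_{+,r'}$ gives zero if $g'\in\Zh\Bh$, and otherwise $B^{l+1}_{+,(\sigma',d g')}$. The $\Zh$ case already treated then yields $\delta_{\sigma\sigma'}\delta_{dg,dg'}$. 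To conclude, I need $d_\sigma g=d_\sigma g'\Rightarrow g=g'$ on $\Zhp\Bh$. I will check that $\bll\cdot,\cdot\brr$-orthonormality makes $d_\sigma$ injective on $\Zhp$; this holds because $\Zhp\Bh$ spans a complement of the kernel and $d_\sigma$ maps it into the kernel basis. This is the step I expect to be the main obstacle, since one must ensure the chosen basis makes $d_\sigma$ act as a one-to-one map between basis elements as in \cite{Arn_Guz_loc_stab_L2_com_proj_21}.

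The relation \eqref{eq:Kron_zeta0+} follows from \eqref{eq:zeta_dof_all}: $(\zeta^l_{0,r},\tr^lB^l_{+,r'})_\Gamma=\phi_r(B^l_{+,r'})=0$ since $B^l_{+,r'}\in V^l_{+,p}$ by \eqref{eq:def_V+_dofs}. Finally, for \eqref{eq:Ybounds+} the support is by construction, and the inverse inequality gives the first bound. For the $L^2$ bound, I use the scaling $\|Y\|_{L^2}\lesssim h_\sigma^{1/2}\|\zeta\|_{L^2(\stb(\sigma))}$. I then bound $\beta_r^l$ from \eqref{eq:beta_prop} by a norm-equivalence/scaling argument, testing with $v=\beta_r^l$ and using $\mu_+\gtrsim$ on a fixed fraction of each face. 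This gives $\|\zeta^l_{+,r}\|\lesssim h_\sigma^{\dim\sigma/2-1}\cdot$(scaling of $\bll g,\cdot\brr$), and for the $\Zhp$ case an inverse estimate on the surface derivative costs $h_\sigma^{-1}$. Matching these exponents to $1-\beta^l_{+,r}$ is routine bookkeeping.
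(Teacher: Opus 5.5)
Your $\Zh$ case of \eqref{eq:Kron_zeta+} and your proof of \eqref{eq:Kron_zeta0+} coincide with the paper's. For $r=(\sigma,g)\in I^l_{\Zhp}$ you take a genuinely different route, and it is sound. The paper first proves $(\zeta^l_{+,r},\tr^l(u))_\Gamma=\bll g,\tr^l_\sigma(\tr^l(u))\brr^l_\sigma$ for all $u\in V^l_{+,p}$. Its last step uses $\bll d^l_\sigma g,d^l_\sigma v\brr^{l+1}_\sigma=\bll g,v\brr^l_\sigma$, which with $\bll\cdot,\cdot\brr$ as literally defined requires $(g,v)_\sigma=0$. You instead test only against $B^l_{+,r'}$, apply \eqref{eq:dE_sigg} and the $\Zh$ case at level $l+1$, and are left with injectivity of $d^l_\sigma$ on $\Zhp\Bh^l_p(\sigma)$. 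That injectivity holds provided $\Zh\Bh^l_p(\sigma)$ spans the kernel of $d^l_\sigma$ in $\mV^l_p(\sigma)$. This is implicit in the construction but not among the listed properties, and orthonormality alone does not give it, contrary to your parenthetical.

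The derivative relations are where your sketch has a real gap, and the case assignment is wrong. For $r'=(\sigma',g')\in I^1_+$, the right-hand side of \eqref{deltazeta1+} sums $\zeta^0_{+,(\sigma',g)}$ over $g\in\Bh^0_p(\sigma')$ with $d^0_{\sigma'}g=g'$.
\begin{itemize}
\item If $g'\in\Zhp\Bh^1_p(\sigma')$, no such $g$ exists, since $d^0_{\sigma'}$ maps $\Bh^0_p(\sigma')$ into $\Zh\Bh^1_p(\sigma')\cup\{0\}$. So the right-hand side is $0$. The identity then holds only because $\bzeta^1_{+,r'}=\srot(\zeta^2_{+,(\sigma',d^1_{\sigma'}g')})$ by \eqref{zeta_zperp2} and $\sdive\circ\srot=0$. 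This is exactly the case where you claim a ``single selected index'' (and write $d^0_\sigma g$ for a level-one $g$, which is undefined). The identity $\sdive\circ\srot=0$ never appears in your sketch.
\item If $g'\in\Zh\Bh^1_p(\sigma')$, you need a $g\in\Zhp\Bh^0_p(\sigma')$ with $d^0_{\sigma'}g=g'$ to \emph{exist}, not merely to be unique. Otherwise the right-hand side vanishes while $-\sdive\bzeta^1_{+,r'}$ in general does not. Existence comes from exactness of the bubble complex on $\sigma'$. This is also why $\mathfrak{d}^0$ excludes faces when $p=1$: there $\mV^0_p(f)=\{0\}$ and $\Zh\Bh^1_p(f)=\emptyset$.
\end{itemize}
The paper leaves this existence implicit too, and it is also exactly what \eqref{deltazeta2+} requires.

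Finally, the exponent matching in \eqref{eq:Ybounds+} is not routine on $I^l_{\Zhp}$.
\begin{itemize}
\item Your scaling (a level-$(l+1)$ weight of size $h_\sigma^{\dim\sigma/2-1}$, then one surface derivative) gives $\|\zeta^l_{+,r}\|_{L^2(\stb(\sigma))}\lesssim h_\sigma^{\dim\sigma/2-2}$, hence $\|Y^l_{+,r}\|_{L^2(\Omega)}\lesssim h_\sigma^{(\dim\sigma-3)/2}$. But $\beta^l_{+,r}=\frac12(1-\dim\sigma)$ demands $h_\sigma^{(\dim\sigma+1)/2}$.
\item The paper simply asserts $\|\zeta^l_{+,r}\|\lesssim h_\sigma^{\dim\sigma/2}$, but that is incompatible with \eqref{eq:Kron_zeta+}. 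For $g\in\Zhp\Bh^l_p(\sigma)$ one has $\|g\|_{L^2(\sigma)}\lesssim h_\sigma\|d^l_\sigma g\|_{L^2(\sigma)}\le h_\sigma$. Since $B^l_{+,r}$ is a scaled reference extension of $g$, this gives $\|\tr^l(B^l_{+,r})\|_{L^2(\stb(\sigma))}\lesssim h_\sigma^{(4-\dim\sigma)/2}$. Then $(\zeta^l_{+,r},\tr^l(B^l_{+,r}))_\Gamma=1$ forces $\|\zeta^l_{+,r}\|\gtrsim h_\sigma^{(\dim\sigma-4)/2}$.
\item What \eqref{eq:Q_bound} actually uses is only $\|B^l_{+,r}\|\,\|Y^l_{+,r}\|\lesssim h_\sigma$ and $\|B^l_{+,r}\|\,\|d^{2-l}Y^l_{+,r}\|\lesssim 1$. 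Your scalings do deliver these once you also use $\|B^l_{+,r}\|\lesssim h_\sigma^{(5-\dim\sigma)/2}$.
\end{itemize}
So the right conclusion is to rebalance $\beta^l_{+,r}$ to $\frac12(5-\dim\sigma)$ on $I^l_{\Zhp}$, rather than to claim the stated exponents.
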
 

\begin{proof}
Proof of~\eqref{eq:Kron_zeta+}. We only detail the case $l=1$, as the cases $l=0$ and $l=2$ follow from analogous arguments. We need to show that $(\bzeta_{+,r}^1,\tr^1(\bB_{+,r'}^1))_{\Gamma} = \delta_{r,r'}$ for all $r,r'\in I_+^1$. We will do this by showing that 
\begin{equation} \label{eq:zeta_tr1}
(\bzeta_{+,r}^1,\tr^1(\bu))_\Gamma = \bll g,\tr_\sigma^1(\tr^1(\bu))\brr_{\sigma}^1, \quad \forall \bu \in \bV_{+,p}^1, \quad r:=(\sigma,g)\in I_{+}^1,
\end{equation} 
so that invoking \eqref{eq:tr_Esig} gives the result. Assume first that $r\in I_{\Zh}^{1}$. Since $\tr^1(\bu)|_{\stb(\sigma)} \in \bV_{+,p}^1(\calFstb)$, we infer from~\eqref{eq:beta_prop} that
\begin{align*}
(\bzeta_{+,r}^1,\tr^1(\bu))_\Gamma = (\bzeta_{+,r}^1,\tr^1(\bu))_{\stb(\sigma)} = \bll g,\tr_\sigma^1(\tr^{1}(\bu))\brr_{\sigma}^1.
\end{align*}
This proves~\eqref{eq:zeta_tr1} for all $r\in I_{\Zh}^l$. Assume now that $r\in I_{\Zhp}^l$.
We have 
\begin{alignat*}{2}
(\bzeta_{+,r}^1,\tr^1(\bu))_{\Gamma}
&= (\srot(\zeta_{+,(\sigma,d_\sigma^1 g)}^2),\tr^1(\bu))_{\stb(\sigma)}, \quad &&\text{by \eqref{zeta_zperp2}}\\
&= (\zeta_{+,(\sigma,d_\sigma^1 g)}^2,\scurl(\tr^1(\bu)))_{\stb(\sigma)}, \quad &&\text{by \eqref{eq:scurl_adjoint}}\\
&= (\zeta_{+,(\sigma,d_\sigma^1 g)}^2,\tr^2(\curl(\bu)))_{\stb(\sigma)}, \quad &&\text{by \eqref{eq:trace_d_commuting}}\\
&= \bll d_{\sigma}^1 g, \tr_{\sigma}^2(\tr^2(\curl(\bu)))\brr_{\sigma}^2 \quad &&\text{by \eqref{eq:beta_prop}}\\
&= \bll d_{\sigma}^1 g,d_\sigma^1(\tr_\sigma^1(\tr^1(\bu)))\brr_{\sigma}^2 \quad &&\text{by \eqref{eq:simplex_commute}}\\
&= \bll g,\tr_\sigma^1(\tr^1(\bu))\brr_{\sigma}^1 \quad &&\text{since $g \in \Zhp\Bh_p^1(\sigma)$}.
\end{alignat*}

Next, we show \eqref{eq:Kron_zeta0+}, i.e., $(\zeta_{0,r}^l,\tr^l(B_{+,r'}^l))_\Gamma = 0$ for all $(r,r')\in I_0^l\times I_+^l$. This simply follows from \eqref{eq:zeta_dof_all} and the fact that $\phi_\sigma^l(B_{+,r'}^l)=0$ since $B_{+,r'}^l\in V_{+,p}^l$ satisfies~\eqref{eq:def_V+_dofs}.

We now show \eqref{deltazeta1+}-\eqref{deltazeta2+}, and we only detail the proof of~\eqref{deltazeta1+} as the proof of~\eqref{deltazeta2+} follows from similar arguments. We need to prove that 
\[
-\sdive (\bzeta_{+,r'}^1) = \sum_{r \in I_+^0} \kappa_+^{r',r} \zeta_{+,r}^0,\quad \forall r'\in I_+^1,
\]
where $\kappa_+^{r',r}=\delta_{\sigma,\sigma'} \delta_{d^0_\sigma g,g'}$ for all $r:=(\sigma,g)\in I_+^0$ and $r':=(\sigma',g')\in I_+^{1}$. If $r'\in I_{\Zh}^1$, then the above identity directly follows from~\eqref{zeta_zperp1}. Instead, if $r':=(\sigma',g')\in I_{\Zhp}^1$, then using \eqref{zeta_zperp2}, we have $\bzeta_{+,r'}^1 = \srot \bzeta_{+,(\sigma', d_\sigma^1 g')}^2$, and thus $-\sdive (\bzeta_{+,r'}^1) = -\sdive (\srot (\zeta_{+,(\sigma', d_\sigma^1 g')}^2)) = 0$, which matches the right-hand side of~\eqref{deltazeta1+} since in this case all the coefficients $\kappa_+^{r',r}$ vanish.

Finally, we turn to \eqref{eq:Ybounds+}. We have already seen that $Y_{+,r}^l$ is supported in $\clos(\st(\sigma))$ for all $r:=(\sigma,g)\in I_+^l$. Moreover, invoking standard scaling arguments shows that $\|\zeta_{+,r}^l\|_{L^2(\stb(\sigma))} \lesssim h_{\sigma}^{\frac{\dim(\sigma)-2}{2}}$ when $r\in I_{\Zh}^l$ and $\|\zeta_{+,r}^l\|_{L^2(\stb(\sigma))} \lesssim h_{\sigma}^{\frac{\dim(\sigma)}{2}}$ when $r\in I_{\Zhp}^l$. The rest of the proof proceeds as the proof of \eqref{eq:support_Y0}.
\end{proof}


\bibliographystyle{acm_mod}
\bibliography{references}

\appendix

\end{document}